\newtheorem{Theorem}{Theorem}[section]
\newtheorem{theorem}[Theorem]{Theorem}
\newtheorem{lemma}[Theorem]{Lemma}
\newtheorem{proposition}[Theorem]{Proposition}
\newtheorem{corollary}[Theorem]{Corollary}
\newtheorem{remark}[Theorem]{Remark}
\newcommand{\R}{{\mathbb R}}
\newcommand{\eps}{\varepsilon}
\newcommand{\ep}{\varepsilon}
\newcommand{\al}{\alpha}
\newcommand{\de}{\delta}
\numberwithin{equation}{section}
\title{Entire and ancient solutions 
of a supercritical semilinear heat equation}
\author{%
 Peter Pol\'a\v cik\footnote{Supported in part by NSF Grant
   DMS-1856491
}
\\
\small School of Mathematics, University of Minnesota\\
\small Minneapolis, MN 55455
\\~\\
 Pavol Quittner\footnote{Supported in part by VEGA Grant 1/0347/18
 and by the Slovak Research and Development Agency 
 under the contracts No. APVV-14-0378 and APVV-18-0308} 
\\
\small Department of Applied Mathematics and Statistics, 
       Comenius University,\\
\small Mlynsk\'a dolina, 84248 Bratislava, Slovakia} 
\date{}  
\begin{document}
 
\maketitle

\begin{quote}{\small
    {\bfseries Abstract.}
    We consider the semilinear heat equation
    $ u_t=\Delta u+u^p$ on $\R^N$. Assuming that $N\ge 3$ and
    $p$ is greater than the
    Sobolev critical exponent $(N+2)/(N-2)$, we examine 
    entire solutions (classical solutions defined for all $t\in \R$)
    and ancient solutions (classical solutions defined on
    $(-\infty,T)$ for some $T<\infty$).  We prove a new Liouville-type
    theorem saying that if $p$ is greater than the Lepin exponent
    $p_L:=1+6/(N-10)$ ($p_L=\infty$ if $N\le 10$), then all positive
    bounded radial entire solutions are steady states. The theorem is not
    valid without the assumption of radial symmetry; in other ranges
    of supercritical $p$ it is known not to be valid even in the class of
    radial solutions. Our other results 
    include classification theorems for nonstationary entire
    solutions (when they exist) and ancient solutions, as well as some
    applications in the theory of blowup of solutions. 
} 
 \end{quote}

{\emph{Key words}: Semilinear heat equation, entire solutions, ancient solutions, Liouville theorems, blowup.} 

{\emph{AMS Classification:} 35K58, 35B08, 35B44, 35B05, 35B53}

\tableofcontents 
\section{Introduction}
\label{intro}

Entire and ancient solutions play an important role 
in studies of singularities and long-time behavior 
of solutions of many evolution problems. In that vein,
of prominent importance are entire and ancient solutions of
some specific equations which serve as 
scaling limits of many other equations with
a given structure.

In this paper, we consider the semilinear heat equation 
\begin{equation} \label{eq-Fuj}
u_t=\Delta u+u^p,
\end{equation}
where $u=u(x,t)>0$,  $x\in\R^N$, and $p>1$. 
We investigate
positive classical solutions of the problems
\begin{equation} \label{eq-entire}
u_t=\Delta u+u^p, \qquad x\in\R^N,\ t\in(-\infty,\infty),
\end{equation}
(entire solutions of \eqref{eq-Fuj}), and
\begin{equation} \label{eq-ancient}
u_t=\Delta u+u^p, \qquad x\in\R^N,\ t\in(-\infty,T),
\end{equation}
where $T<\infty$ (ancient solutions of \eqref{eq-Fuj}).

 Note that equation \eqref{eq-Fuj} is invariant under
 the scaling
 $$u(x,t)\mapsto\lambda^{2/(p-1)}u(\lambda x,\lambda^2t).$$
With respect to the same scaling, \eqref{eq-Fuj}  can be considered as 
the scaling limit of a large class of equation whose nonlinearities
have polynomial growth, such as equations of the form
\begin{equation} \label{eq-g}
u_t=\Delta u+u^p+g(u),\qquad x\in\R^N,
\end{equation}
where $g$ is a continuous function with $\lim_{u\to\infty}u^{-p}g(u)=0$. 
More specifically, applying the above scaling to equation \eqref{eq-g}
and taking formally $\lambda\to \infty$, one obtains equation \eqref{eq-Fuj}. Of
course, the connection between \eqref{eq-g} and \eqref{eq-Fuj} is not
just formal; it is well known that with  good understanding of
\eqref{eq-Fuj},  in particular of its entire and ancient solutions,
one can draw interesting conclusions about solutions of the Cauchy
problem for \eqref{eq-g} (Corollary \ref{cor-subcrit} below is an
example of this).   

We are mainly interested in
radially symmetric solutions
of \eqref{eq-entire} and \eqref{eq-ancient}. 
If no confusion seems likely,
we will often consider a radial solution $u$ as a function
of $r:=|x|$ and $t$, i.e.\ $u=u(r,t)$.

The simplest entire solutions are steady states.
Positive steady states of \eqref{eq-entire}
exist if and only if $p\geq p_S$, where 
$$p_S:=\begin{cases}
     \frac{N+2}{N-2} &\hbox{ if }N>2, \\
     +\infty &\hbox{ if }N\leq2
   \end{cases} $$
is the Sobolev exponent (see \cite{GS81}, \cite{CL91} or \cite{QS07}).
If $p\geq p_S$, then radial positive steady states
form a one-parameter family $\{\phi_{\alpha}\}_{\alpha>0}$,
where $\phi_\alpha(0)=\alpha$. 
These solutions are ordered---that is,
$\phi_\alpha<\phi_\beta$ for $\alpha<\beta$---if and only if $p\geq
p_{JL}$, where
$$p_{JL}:=\begin{cases}
     1+4\frac{N-4+2\sqrt{N-1}}{(N-2)(N-10)} 
        &\hbox{ if }N>10, \\
     +\infty &\hbox{ if }N\leq10,
   \end{cases} $$
see \cite{W93} or \cite{QS07}.
   Ordered or not, the family $\{\phi_{\alpha}\}_{\alpha>0}$
   approaches as $\alpha \to \infty$ the singular steady state
$$ \phi_\infty(x):=L|x|^{-2/(p-1)}, \qquad
L:=\left(\frac2{(p-1)^2}((N-2)p-N)\right)^{\frac1{p-1}},$$
which has a special role in this paper. It
exists whenever $p(N-2)>N$.

In regard to time-dependent entire solutions, denoting
$$p^*:=\begin{cases}
     \frac{N(N+2)}{(N-1)^2} &\hbox{ if }N>2, \\
     +\infty &\hbox{ if }N\leq2,
   \end{cases} $$
the following Liouville-type theorem is known (see \cite{PQS07,BV98,Q16}):

\begin{theorem} \label{thm-subcrit}
If $p<p_S$, then  \eqref{eq-entire} does not possess positive radial solutions.
If $p<p^*$, then  \eqref{eq-entire} does not possess any positive solutions.
\end{theorem}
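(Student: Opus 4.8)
The plan is to argue by contradiction and to reduce each of the two assertions to a rigidity (Liouville-type) statement for the stationary profile problem that underlies it, treating the two exponent ranges by the two different mechanisms responsible for them. Throughout, suppose $u>0$ solves \eqref{eq-entire} and aim to derive $u\equiv 0$.

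\emph{The general range $p<p^*$.} Here I would use the integral-estimate technique of Bidaut--V\'eron. Testing the equation against $u^{q}\zeta^{m}$, with $q$ a power to be chosen and $\zeta$ a space--time cut-off supported in a parabolic cylinder $Q_R=B_R\times(-R^2,0)$, and integrating by parts, one combines the resulting identity with a Kato/Bochner-type inequality for $u^{1+\sigma}$ and absorbs all the gradient and time-derivative error terms by Young's inequality. This produces a self-improving bound of the form $\int_{Q_R}u^{p+q}\le C\,R^{-\theta}\int_{Q_{2R}}u^{p+q}$ whose exponents are admissible \emph{exactly} when $p<p^*=N(N+2)/(N-1)^2$; this algebraic compatibility condition, which is what forces the dominant term to beat the errors, is precisely where $p^*$ is produced. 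Sending $R\to\infty$ then yields $\int u^{p+q}=0$, hence $u\equiv 0$.

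\emph{The radial range $p<p_S$.} For radial $u$ I would pass to backward self-similar variables about an arbitrary base point, setting $w(y,s)=(t_0-t)^{1/(p-1)}u(x_0+y\sqrt{t_0-t},t)$ and $s=-\log(t_0-t)$, so that $w$ is a global ($s\in\R$) radial solution of $w_s=\Delta w-\tfrac12 y\cdot\nabla w-\tfrac{w}{p-1}+w^p$. This equation carries the Giga--Kohn energy $E[w]=\int\bigl(\tfrac12|\nabla w|^2+\tfrac{w^2}{2(p-1)}-\tfrac{w^{p+1}}{p+1}\bigr)\rho\,dy$, with weight $\rho=e^{-|y|^2/4}$, which is nonincreasing along the flow. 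After establishing the a priori bounds that guarantee finiteness of $E$ and parabolic compactness, the limit sets as $s\to\pm\infty$ consist of bounded nonnegative \emph{stationary} solutions, which in the radial class are classified for the entire subcritical range $p<p_S$---via the Pohozaev/Gidas--Spruck analysis of the profile ODE---as only $w\equiv 0$ and $w\equiv\kappa:=(p-1)^{-1/(p-1)}$. Since the zoom-in limit ($s\to+\infty$, $t\to t_0^-$) of a solution regular at $t_0$ must be $w\equiv 0$, the energy is squeezed down to its value at $0$; ruling out the remaining possibility that the ancient ($s\to-\infty$) limit equals $\kappa$---which would force $u$ to be the blow-up ODE profile or a heteroclinic to it, incompatible both with $u$ being a regular entire solution and with the nonexistence of positive steady states for $p<p_S$---gives $u\equiv 0$. (Equivalently, a doubling/rescaling argument together with the radial elliptic Liouville theorem yields a universal bound and reduces matters to the bounded case.)

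The main obstacle is different in the two regimes. For $p<p^*$ the difficulty is purely book-keeping but sharp: one must choose the test powers and organize the integrations by parts so that \emph{every} error term is absorbable, and this cannot be arranged beyond $p^*$ without extra structure. For the radial range $p<p_S$ the crux is twofold: first, securing the a priori/universal bounds that make the energy method run, since entire solutions are not assumed bounded; and second, pushing the stationary classification (and the exclusion of the $\kappa$-limit) all the way to $p_S$, which is exactly the step that requires radial symmetry and mirrors why the Gidas--Spruck elliptic Liouville theorem is sharp at the Sobolev exponent.
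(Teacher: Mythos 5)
First, a point of calibration: the paper does not prove Theorem~\ref{thm-subcrit} at all---it is quoted as a known result with references to \cite{BV98}, \cite{PQS07} and \cite{Q16}---so your proposal must be measured against the arguments in that literature, which it attempts to reproduce in outline. The outline has a genuine gap at the heart of the radial case. Your plan rests on the claim that, after passing to backward self-similar variables, the limit set as $s\to-\infty$ consists of \emph{bounded} nonnegative stationary profiles, classified (Giga--Kohn) as $\{0,\kappa\}$ for $p<p_S$. But even after the doubling reduction to a bounded entire solution $u\le M$, the rescaled function only satisfies $\|v(\cdot,s)\|_\infty\le Me^{-s/(p-1)}$, which blows up as $s\to-\infty$: boundedness of $v$ backward in time is exactly the type-I bound \eqref{u-Tdecay}, and it is not available a priori (it expresses backward-in-time \emph{decay} of $u$, not boundedness). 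Without it, the backward limit set may contain singular profiles---note that $\phi_\infty$ exists for all $p>N/(N-2)$, hence in part of the subcritical range---so the classification you invoke does not apply as stated. This is precisely why the present paper, in its supercritical analysis, works with the universal estimate of Proposition~\ref{prop-ub} and treats the singular limit $\phi_\infty$ as a genuine possibility.

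Second, even granting a bounded backward limit, your exclusion of the case where the backward limit is $\kappa$ is not an argument. Energy monotonicity is perfectly compatible with a connection from $\kappa$ at $s=-\infty$ to $0$ at $s=+\infty$, since $E(\kappa)>0=E(0)$; such connections exist, namely $\varphi(s-s_0)=\kappa(1+e^{s-s_0})^{-1/(p-1)}$ (cf.\ Remark~\ref{rem-MZv}), corresponding to the ODE solutions $\kappa(T^*-t)^{-1/(p-1)}$, which are ancient but not entire. Observing that \emph{those} are incompatible with entireness rules out only the spatially homogeneous connections; to conclude, you must exclude \emph{every} connection from $\kappa$ to $0$, i.e.\ show that any ancient solution whose rescaling emanates from $\kappa$ is spatially homogeneous. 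That statement is the Merle--Zaag classification (Theorem~\ref{thm-MZ}), a deep theorem which your phrase ``incompatible with $u$ being a regular entire solution and with the nonexistence of positive steady states'' silently assumes; a correct route would either invoke \cite{MZ98} explicitly at this point (together with the a priori bound it needs, which is the gap above), or replace it by the intersection-comparison and energy arguments actually carried out in \cite{PQS07,Q16}. As for the $p<p^*$ half, your text is a fair description of the Bidaut-V\'eron integral-estimate scheme, but it is a description rather than a derivation: the self-improving inequality you posit, with exponents ``admissible exactly when $p<p^*$,'' is the entire content of that proof and is asserted, not established.
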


Nonexistence of positive (non-radial) solutions
of \eqref{eq-entire} for $p\in[p^*,p_S)$ is still an open problem.
On the other hand, a nonexistence result for sign-changing radial solutions
has been obtained in \cite{BPQ11}.

Theorem~\ref{thm-subcrit} 
has a number of interesting applications
in equations 
\eqref{eq-Fuj}, \eqref{eq-g}, and 
even more general problems \cite{PQS07}.
As an illustration, we just state the following
optimal universal estimate for positive solutions of
\eqref{eq-g} on any time interval $(\tau,T)$ (see \cite[Theorem~3.1]{PQS07}).

\begin{corollary} \label{cor-subcrit}
Assume $g$ is a continuous function such that $u^{-p}g(u)\to0$ as $u\to\infty$
and  let $u$ be a positive solution of \eqref{eq-g} on an interval
$(\tau,T)$. Assume that either $u$ is radial and $p<p_S$,
or $p<p^*$. Then  
\begin{equation} \label{u-decay-subcrita}
\|u(\cdot,t)\|_\infty\leq C(1+(t-\tau)^{-1/(p-1)}+(T-t)^{-1/(p-1)})
\quad\hbox{for all }\quad 
t\in(\tau,T),
\end{equation}
where $C=C(g,n)$ is a constant independent of $u$, $\tau$, and $T$. If
$g\equiv 0$, then the following stronger version of \eqref{u-decay-subcrita} 
holds:
\begin{equation} \label{u-decay-subcritb}
\|u(\cdot,t)\|_\infty\leq C((t-\tau)^{-1/(p-1)}+(T-t)^{-1/(p-1)})
\quad\hbox{for all }\quad
t\in(\tau,T).
\end{equation}
\end{corollary}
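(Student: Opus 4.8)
The plan is to argue by contradiction with a doubling-and-rescaling (blow-up) scheme, converting the nonexistence statements of Theorem~\ref{thm-subcrit} into the quantitative bound. The natural object is the scale-covariant quantity $M:=u^{(p-1)/2}$: under the scaling $u(x,t)\mapsto\lambda^{2/(p-1)}u(\lambda x,\lambda^2t)$ it transforms like an inverse length, so estimate~\eqref{u-decay-subcrita} is equivalent to a bound $M(x,t)\le C\bigl(1+d_P(x,t)^{-1}\bigr)$, where $d_P(x,t):=\min\{\sqrt{t-\tau},\sqrt{T-t}\}$ measures the parabolic distance of $(x,t)$ to the time-boundary of the cylinder $Q:=\R^N\times(\tau,T)$. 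First I would suppose this bound fails: then there exist solutions $u_k$ on cylinders $\R^N\times(\tau_k,T_k)$ and points $(y_k,s_k)$ with $M_k(y_k,s_k)\,d_P(y_k,s_k)\to\infty$. When the constant $1$ is present this also forces $M_k(y_k,s_k)\to\infty$; in the case $g\equiv0$ one only obtains $M_k d_P\to\infty$, and it is precisely this distinction that will later remove the $1$.

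The second step is to regularize the base point via the Doubling Lemma of \cite{PQS07}, applied in $X=\R^N\times\R$ with the parabolic metric and with $\Gamma$ the time-boundary of $Q$. This produces new centers $(x_k,t_k)$ with $M_k(x_k,t_k)\ge M_k(y_k,s_k)$, still satisfying $M_k(x_k,t_k)\,d_P(x_k,t_k)\to\infty$, and with the crucial local control $M_k\le 2M_k(x_k,t_k)$ on the parabolic cylinder of radius $\sim\lambda_k:=M_k(x_k,t_k)^{-1}$ about $(x_k,t_k)$. Rescaling
\[
v_k(y,s):=\lambda_k^{2/(p-1)}\,u_k\bigl(x_k+\lambda_k y,\ t_k+\lambda_k^2 s\bigr)
\]
gives $v_k(0,0)=1$, a uniform bound $v_k\le 2^{2/(p-1)}$ on expanding cylinders, and the equation
\[
\partial_s v_k=\Delta v_k+v_k^p+\lambda_k^{2p/(p-1)}g\bigl(\lambda_k^{-2/(p-1)}v_k\bigr).
\]
Since $M_k(x_k,t_k)\,d_P\to\infty$ forces $\lambda_k^{-2}(t_k-\tau_k)\to\infty$ and $\lambda_k^{-2}(T_k-t_k)\to\infty$, the rescaled time interval exhausts $\R$; and because $u^{-p}g(u)\to0$, the last term equals $v_k^p\cdot u^{-p}g(u)\to0$ wherever $\lambda_k\to0$ (forced by $M_k\to\infty$ when the $1$ is present, and moot when $g\equiv0$). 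Parabolic estimates then yield, along a subsequence, a limit $v$ that is a positive bounded entire solution of~\eqref{eq-Fuj} with $v(0,0)=1$.

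The final step is to contradict Theorem~\ref{thm-subcrit}. In the non-radial regime $p<p^*$ the limit $v$ is a positive entire solution on $\R^N\times\R$, which the second assertion of Theorem~\ref{thm-subcrit} forbids. In the radial regime $p<p_S$ the difficulty is that the doubling centers $x_k$ need not lie at the origin, so $v$ need not be radial; here I would split according to the behaviour of $|x_k|/\lambda_k$. If it stays bounded, $v$ is a radial entire solution, excluded by the first assertion of Theorem~\ref{thm-subcrit}. If $|x_k|/\lambda_k\to\infty$, then on the zoomed-in scale the spheres $\{|x|=\mathrm{const}\}$ flatten, $|x_k+\lambda_k y|=|x_k|+\lambda_k(e_k\!\cdot y)+o(\lambda_k)$ with $e_k:=x_k/|x_k|$, so $v$ depends on a single spatial variable and solves the one-dimensional equation $v_s=v_{\xi\xi}+v^p$; since $p^*=+\infty$ when $N=1$, Theorem~\ref{thm-subcrit} again excludes it. This dichotomy in the radial case---correctly identifying the symmetry type of the blow-up limit and ruling out the flat (planar) limit by the one-dimensional Liouville theorem---is the step I expect to require the most care. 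The improvement to~\eqref{u-decay-subcritb} when $g\equiv0$ then comes for free: the rescaled equation is \emph{exactly}~\eqref{eq-Fuj} for every choice of $\lambda_k$, so the constant $1$, whose only purpose was to force $\lambda_k\to0$ and kill the $g$-term, is no longer needed.
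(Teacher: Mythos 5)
Your proposal is correct and follows essentially the same route as the proof this paper relies on: the paper does not reprove Corollary \ref{cor-subcrit} but cites \cite[Theorem 3.1]{PQS07}, whose argument is precisely this doubling--rescaling--Liouville scheme (the same scheme the paper itself uses to prove Proposition \ref{prop-ub}), including the dichotomy in the radial case between a recentered radial limit (excluded by the radial Liouville theorem) and a flat one-dimensional limit when $|x_k|/\lambda_k\to\infty$ (excluded since $p^*=\infty$ for $N=1$). The only step to tighten is the disposal of the $g$-term where $v_k$ is small, since there $u=\lambda_k^{-2/(p-1)}v_k$ need not tend to infinity and $u^{-p}g(u)\to0$ cannot be invoked directly; the standard fix is to use $|g(s)|\le\eps s^p+C_\eps$ for all $s\ge0$, which bounds the rescaled term by $\eps v_k^p+C_\eps\lambda_k^{2p/(p-1)}\le \eps\,2^{2p/(p-1)}+o(1)$, and then let $\eps\to0$.
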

Since $C$ is independent of $\tau$, taking $\tau\to-\infty$,
we obtain from \eqref{u-decay-subcritb}
the following estimates for ancient solutions of \eqref{eq-Fuj}:
\begin{equation} \label{u-decay-subcrit}
\|u(\cdot,t)\|_\infty\leq C(T-t)^{-1/(p-1)}
\quad\hbox{for all }\quad  
t\in(-\infty,T).
\end{equation}
For ancient solutions satisfying \eqref{u-decay-subcrit}
the following classification theorem
has been proved in  \cite{MZ98}:  

\begin{theorem} \label{thm-MZ}
Let $p<p_S$ and $u$ be a positive solution of \eqref{eq-ancient}
satisfying
\begin{equation} \label{u-Tdecay}
\|u(\cdot,t)\|_\infty\leq C(T-t)^{-1/(p-1)} \quad\hbox{as}\quad t\to-\infty.
\end{equation}
Then there exists $T^*\geq T$ such that 
$u(x,t)=\kappa(T^*-t)^{-1/(p-1)}$, where 
$$\kappa:=(p-1)^{-1/(p-1)}.$$
\end{theorem}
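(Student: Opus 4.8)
The plan is to pass to the backward self-similar (Giga--Kohn) variables and study the resulting autonomous equation as a gradient flow. Fixing $a\in\R^N$, I set
$$
w(y,s):=(T-t)^{1/(p-1)}\,u\bigl(a+y\sqrt{T-t},\,t\bigr),\qquad y=\frac{x-a}{\sqrt{T-t}},\quad s=-\log(T-t),
$$
so that $t\in(-\infty,T)$ corresponds to $s\in(-\infty,\infty)$ and $w$ solves
$$
w_s=\Delta w-\tfrac12\,y\cdot\nabla w-\tfrac1{p-1}\,w+w^p .
$$
The hypothesis \eqref{u-Tdecay} turns into the bound $\|w(\cdot,s)\|_\infty\le C$ for all sufficiently negative $s$, and parabolic estimates bound $w$ together with its derivatives, so the backward orbit is precompact in $C^2_{loc}$ and in the weighted space $L^2_\rho$ with $\rho:=e^{-|y|^2/4}$. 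The equation possesses the Lyapunov functional
$$
E[w]=\int_{\R^N}\Bigl(\tfrac12|\nabla w|^2+\tfrac1{2(p-1)}w^2-\tfrac1{p+1}w^{p+1}\Bigr)\rho\,dy,\qquad \tfrac{d}{ds}E[w(\cdot,s)]=-\int_{\R^N} w_s^2\,\rho\,dy\le0 .
$$

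By LaSalle's invariance principle the $\alpha$-limit set of the backward orbit is a nonempty, compact, connected set of bounded steady states of the rescaled equation, and here the subcriticality $p<p_S$ enters decisively: by the classification of Giga and Kohn the only nonnegative bounded steady states are $w\equiv0$ and $w\equiv\kappa$. Connectedness then forces $w(\cdot,s)\to0$ or $w(\cdot,s)\to\kappa$ as $s\to-\infty$. The first alternative is excluded, since the linearization at $0$ is $\Delta-\tfrac12 y\cdot\nabla-\tfrac1{p-1}$, whose spectrum $\{-k/2-1/(p-1):k\ge0\}$ is strictly negative; thus $0$ is exponentially stable for the forward flow and has trivial unstable manifold, so a backward orbit converging to $0$ is identically $0$, contradicting $u>0$. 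Hence $w(\cdot,s)\to\kappa$.

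I then linearize at $\kappa$. Writing $v:=w-\kappa$ and using $p\kappa^{p-1}=p/(p-1)$, one gets $v_s=\mathcal L v+Q(v)$ with $\mathcal L:=\Delta-\tfrac12 y\cdot\nabla+1$ and $Q(v)=O(v^2)$. The operator $\mathcal L$ is self-adjoint in $L^2_\rho$, with eigenvalues $\lambda_k=1-k/2$ and Hermite eigenfunctions: the constant mode ($\lambda_0=1$), the $N$ linear modes $y_i$ ($\lambda_1=\tfrac12$), the degree-two Hermite polynomials ($\lambda_2=0$), and strictly negative eigenvalues for $k\ge3$. Decomposing $v$ accordingly, backward boundedness confines the orbit to the center--unstable manifold of $\kappa$, on which the stable ($k\ge3$) components are slaved to the center and unstable ones.

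The main obstacle is to show that no non-constant mode carries any dynamics, and the genuinely hard part is the null ($\lambda_2=0$) modes. This is the heart of the Merle--Zaag argument: a center-manifold reduction yields for the null-mode amplitude a scalar Riccati-type equation, and one must exclude its nontrivial solutions for a \emph{bounded, positive, ancient} solution on all of $\R^N$ by feeding the global positivity and the a priori bound \eqref{u-Tdecay} back into the reduced dynamics. The $N$ linear modes ($\lambda_1=\tfrac12$), although harmless for backward boundedness, must likewise be shown to vanish, which can be arranged by an appropriate choice of the center $a$ together with a direct integration of their reduced equations. Once all non-constant modes vanish identically, $w=w(s)$ depends on $s$ alone and solves $w'=-\tfrac1{p-1}w+w^p$; its backward-bounded solutions with $w(-\infty)=\kappa$ are exactly $w(s)=\kappa(1+\tau e^{s})^{-1/(p-1)}$, and existence on all of $(-\infty,T)$ forces $\tau\ge0$. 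Undoing the change of variables gives $u(x,t)=\kappa(T^*-t)^{-1/(p-1)}$ with $T^*=T+\tau\ge T$, as claimed. I expect the null-mode elimination to be the decisive difficulty, the remaining steps being standard once the Giga--Kohn classification and the spectral picture at $\kappa$ are in hand.
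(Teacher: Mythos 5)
The first thing to say is that the paper does not prove Theorem~\ref{thm-MZ} at all: it is quoted from Merle and Zaag \cite{MZ98}, so the only meaningful comparison is with that proof, whose overall strategy your outline reproduces faithfully (self-similar variables, the Lyapunov functional, the Giga--Kohn classification of bounded nonnegative steady states for $p<p_S$, exclusion of the backward limit $0$, spectral decomposition at $\kappa$, and the final ODE step). The difficulty is that your text stops exactly where the real proof begins. The elimination of the null ($\lambda_2=0$) modes is not a step one can defer to ``the heart of the Merle--Zaag argument'': it \emph{is} the theorem. In \cite{MZ98} it occupies the bulk of the paper --- an ODE lemma showing that backward in time either the unstable or the null component dominates, followed by a delicate argument that in the null-dominated regime $w$ develops the profile of a degenerate blowup point, which is then shown to be incompatible with $0<w\le C$ on all of $\R^N$. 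Nothing in your proposal carries this out. Your treatment of the $\lambda_1=\tfrac12$ modes is also not correct as stated: ``an appropriate choice of the center $a$'' is a single fixed vector, whereas the linear-mode amplitudes are functions of $s$; one choice of origin cannot annihilate a time-dependent mode, and in \cite{MZ98} these modes are handled inside the same dominance analysis, not normalized away. Finally, there is no literal finite-dimensional center--unstable manifold here to be ``confined to''; what the mode analysis yields is asymptotics as $s\to-\infty$, and upgrading ``the non-constant modes decay faster'' to ``the non-constant modes vanish identically, hence $w=w(s)$'' requires a separate backward-uniqueness/trapping argument (in \cite{MZ98}, comparison of $w$ with the explicit orbits $\kappa(1+e^{s-s_0})^{-1/(p-1)}$), which you do not supply.

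A smaller but genuine issue is the exclusion of the backward limit $0$. The inference ``strictly negative spectrum $\Rightarrow$ trivial unstable manifold $\Rightarrow$ a backward orbit converging to $0$ is identically $0$'' is heuristic in this infinite-dimensional setting: the nonlinearity $w\mapsto w^p$ is Lipschitz in the weighted space only with constant of order $\|w\|_\infty^{p-1}$, which is bounded but not small, so the standard invariant-manifold argument does not apply off the shelf and needs the smoothing estimates of \cite{GK87,MZ98}. The clean route --- and the one consistent with the paper's own toolkit (cf.\ Proposition~\ref{prop-energy}) --- is energetic: $E[w(\cdot,s)]\to E[0]=0$ as $s\to-\infty$, monotonicity gives $E\le 0$ for all $s$, while $E\ge 0$ for any solution of \eqref{eq-v} that is global forward in time (negative energy forces finite-time blowup of $\int w^2\varrho\,dy$); hence $E\equiv 0$, so $w_s\equiv 0$ and $w$ is a zero-energy steady state, i.e.\ $w\equiv 0$, contradicting positivity. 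To summarize: your skeleton is the right one, and the concluding ODE step (backward-bounded solutions of $w'=-w/(p-1)+w^p$ with $w(-\infty)=\kappa$ are $\kappa(1+\tau e^{s})^{-1/(p-1)}$ with $\tau\ge0$, giving $T^*=T+\tau\ge T$) is correct, but as written the proposal is an annotated table of contents of \cite{MZ98}: its two decisive steps --- the exclusion of the null modes and the passage from asymptotics to exact identification of the solution --- are acknowledged rather than proved.
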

(In this theorem and below, we use 
$C$, $C_1$ etc., to denote constants independent of the solution in
question.) 
Thus,  Corollary~\ref{cor-subcrit} in conjunction with
Theorem~\ref{thm-MZ} shows that the only positive radial ancient solutions
are the (spatially constant) ancient solutions
of the equation $\dot\xi=\xi^p$ (if $p<p^*$, the word ``radial'' can be
omitted in  this statement). Theorem~\ref{thm-MZ} has other
interesting and important consequences 
in the study of the blowup behavior of solutions
of \eqref{eq-Fuj}, which can be found in \cite{MZ98}.

The above results are all concerned with the subcritical case
$p<p_S$. Of course, in the critical or supercritical cases,
the existence of positive radial steady states has 
to be taken into account in the formulation of any
Liouville-type theorems or
problems. A first natural question is whether there are
any positive entire solutions  other than the steady states.   
In some cases, this question has been answered in the negative, but
only when rather severe extra bounds on the solutions are imposed.  
Namely, the following Liouville-type results 
are known (see \cite[Theorem 2.4]{FY11}
and \cite[Theorem 1.2]{PY05}).

\begin{theorem} \label{thmLFPY}
Let  $u$ be a nonnegative solution of \eqref{eq-ancient}.
\begin{itemize}
\item[(i)] Assume $p_S\leq p<p_{JL}$ and $u(\cdot,t)\leq \phi_\infty$
for all $t\leq T$. Then $u\equiv0$.
\item[(ii)] Assume $p>p_{JL}$ and 
$\phi_\alpha\leq u(\cdot,t)\leq \phi_\infty$
for some $\alpha>0$ and all $t\leq T$. 
Then $u(\cdot,t)\equiv\phi_\gamma$ for some $\gamma\geq\alpha$.
\end{itemize}
\end{theorem}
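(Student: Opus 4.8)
The plan is to combine intersection-comparison (zero-number) arguments with the strong maximum principle and the dichotomy in the steady-state structure at $p_{JL}$. For any radial steady state $\psi\in\{\phi_\gamma,\phi_\infty\}$ the difference $w=u-\psi$ solves a linear parabolic equation $w_t=\Delta w+c(x,t)w$ with $c=p\xi^{p-1}$ and $\xi$ between $u$ and $\psi$; two facts will be used repeatedly. First, by the strong maximum principle any interior contact $w(x_0,t_0)=0$ with $w$ one-signed propagates backward to $w\equiv0$ on $(-\infty,t_0]$, so a single touching of $u$ with a steady state forces $u$ to be that steady state. Second, for radial $w$ the number $z(t)$ of sign changes of $w(\cdot,t)$ on $(0,\infty)$ is nonincreasing in $t$ and drops strictly at a degenerate zero. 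I would also record the decisive structural facts: for $p>p_{JL}$ the family $\{\phi_\gamma\}$ is ordered with $\phi_\gamma<\phi_\infty$ and $\phi_\infty-\phi_\gamma\sim a_\gamma\,r^{-m_-}$ as $r\to\infty$, where $m_-=\tfrac{N-2}{2}-\tfrac12\sqrt{(N-2)^2-4pL^{p-1}}$ and $a_\gamma$ is strictly decreasing in $\gamma$; whereas for $p_S\le p<p_{JL}$ each $\phi_\gamma$ oscillates about $\phi_\infty$, crossing it at a sequence $r_k\to\infty$, so that in particular no $\phi_\gamma$ lies below $\phi_\infty$. Finally, parabolic regularity turns the bound $u\le\phi_\infty$ into uniform interior estimates, so the orbit is relatively compact in $C^2_{loc}$ and the backward limit set as $t\to-\infty$ is nonempty, compact and invariant.

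For part (ii), with $p>p_{JL}$, I would set $\gamma_-(t)=\sup\{\gamma\ge\alpha:\phi_\gamma\le u(\cdot,t)\}$. This is finite, since $u(0,t)<\infty=\phi_\infty(0)$, and it is nondecreasing: $\phi_{\gamma_-(s)}$ is an exact bounded solution lying below $u(\cdot,s)$, so the comparison principle on $\R^N$ keeps $\phi_{\gamma_-(s)}\le u(\cdot,t)$ for $t>s$. I then claim $u(\cdot,t)\equiv\phi_{\gamma_-(t)}$. Writing $w=u-\phi_{\gamma_-(t)}\ge0$, if $w$ vanishes at a finite point the strong maximum principle gives $w\equiv0$, so $u=\phi_{\gamma_-(t)}=\phi_\gamma$ with $\gamma\ge\alpha$, as asserted. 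The only alternative is $w>0$ on all of $\R^N$ while $\gamma_-$ cannot be raised, which by the ordered asymptotics above can occur only if $u$ and $\phi_{\gamma_-(t)}$ share the same leading coefficient $a_\gamma r^{-m_-}$ at infinity; since $a_\gamma$ is strictly monotone, a slightly larger $\gamma$ would then satisfy $\phi_\gamma\le u$ outside a compact set, and the interior gap is removed by a final contact argument. Hence $u$ is a steady state.

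For part (i), with $p_S\le p<p_{JL}$, the strong maximum principle gives $u\equiv0$ or $u>0$; I assume $u>0$ and seek a contradiction. The backward limit set $\mathcal A$ consists of steady states $\le\phi_\infty$ (using a weighted/self-similar energy as Lyapunov functional), and by the oscillation property no regular $\phi_\gamma$ satisfies $\phi_\gamma\le\phi_\infty$; hence $\mathcal A\subseteq\{0,\phi_\infty\}$. If $\phi_\infty\in\mathcal A$, then along a backward sequence $u\to\phi_\infty$, so $v=\phi_\infty-u>0$ solves $v_t=\Delta v+c\,v$ with $c=p\xi^{p-1}\to pL^{p-1}|x|^{-2}$; since $pL^{p-1}>(N-2)^2/4$ precisely when $p<p_{JL}$, the Hardy-supercritical (Baras--Goldstein) obstruction forbids such a positive $v$, a contradiction. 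The remaining case $\mathcal A=\{0\}$ means $u\to0$ backward in time; as $u_t\ge\Delta u$, $u$ is a bounded positive ancient supersolution of the heat equation that vanishes in the backward limit, which forces $u\equiv0$ by a Liouville argument for the heat semigroup. Either way $u\equiv0$.

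The principal obstacle, in both parts, is the behaviour as $r\to\infty$. Establishing finiteness of $z(t)$, the exact leading asymptotics of $u$ relative to $\phi_\gamma$ and $\phi_\infty$, and --- in part (i) --- the identification of the backward limit set with steady states (which requires a usable Lyapunov functional on the unbounded domain) is where the threshold $p_{JL}$ enters through the Hardy constant $(N-2)^2/4$ versus $pL^{p-1}$, and where the bulk of the technical effort lies. The interior tools (comparison, the strong maximum principle, and monotonicity of the zero number) are comparatively standard once these asymptotics are controlled.
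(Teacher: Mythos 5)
The paper itself does not prove Theorem \ref{thmLFPY}: it is quoted from \cite[Theorem 2.4]{FY11} and \cite[Theorem 1.2]{PY05}, and the only hint given here about its proof is the remark in Section \ref{sec-proofs} that part (ii) is obtained in \cite{PY05} by passing to self-similar variables and performing a spectral analysis of the linearization at $\phi_\infty$ --- the same machinery that drives this paper's Theorem \ref{thmL1}. Your sliding/intersection-comparison route is therefore genuinely different, but it has gaps in both parts.

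In part (i) the closing step is false as stated. In the branch where the backward limit set is $\{0\}$, you conclude that a bounded positive ancient supersolution of the heat equation which vanishes in the backward limit must be identically zero. No such Liouville principle holds: $u(x,t)=e^{t}$ is already a counterexample, and, worse, the homoclinic entire solutions of Theorem \ref{thm-FY}(i) of this very paper (for $p_S<p<p_L$) are positive, bounded, radial, satisfy $u_t\ge\Delta u$ and $\|u(\cdot,t)\|_\infty\to 0$ as $t\to-\infty$, yet are nonzero. Your argument in this branch never uses the hypothesis $u\le\phi_\infty$, so it cannot work. The other branch (excluding $\phi_\infty$ from the backward limit set via Baras--Goldstein) also has a directional gap: for $V=\phi_\infty-u>0$ one has $V_t-\Delta V=cV$ with $pu^{p-1}\le c\le p\phi_\infty^{p-1}$, so $V$ is a \emph{subsolution} of the Hardy-supercritical equation; to invoke the obstruction you need the lower bound $c\ge\lambda|x|^{-2}$, $\lambda>(N-2)^2/4$, on a full space-time cylinder, i.e.\ $u$ uniformly close to $\phi_\infty$ there, whereas you only have convergence along a sequence of times, locally, and in rescaled variables. (There is also a conflation: the self-similar energy is a Lyapunov functional for the rescaled equation \eqref{eq-v}, whose steady states are the profiles $w_a$, not the $\phi_\gamma$; the claim that the limit set lies in $\{0,\phi_\infty\}$ should be argued through \eqref{eq:2}, not through the oscillation of $\phi_\gamma$.)

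In part (ii) there are two gaps. First, your ``single touching forces identity'' step applies the strong maximum principle backward in time, which requires $u\ge\phi_{\gamma_-(t_0)}$ on a whole interval $(t_1,t_0]$, not merely at $t_0$; for $\gamma_-(t_0)>\alpha$ you only know $u(\cdot,t)\ge\phi_{\gamma_-(t)}$ with $\gamma_-(t)\le\gamma_-(t_0)$ at earlier times, and forward in time the maximum principle gives strict inequality for $t>t_0$, which contradicts nothing. Second, the alternative branch presupposes that $\phi_\infty-u(\cdot,t)$ admits an expansion $a(t)r^{\beta}+o(r^{\beta})$ of the same form as for the steady states; since all $\phi_\gamma$ share the leading term $Lr^{-2/(p-1)}$ and are distinguished only at order $r^{\beta}$, this refined spatial asymptotics of the \emph{time-dependent} solution is exactly the hard core of the theorem, and nothing in your argument establishes it. Even granting it, the ``final contact argument'' does not close the logic: if $\phi_{\gamma'}\le u$ outside a compact set but fails inside, then $\gamma'$ is simply inadmissible and $\gamma_-$ remains the supremum --- no contradiction arises. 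Supplying precisely this missing control (in integrated, spectral form) is what the weighted-space analysis of the linearization at $\phi_\infty$ in \cite{PY05} accomplishes, and it is where the hypothesis $p>p_{JL}$ enters through the Hardy-type bound on the quadratic form.
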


Without the extra bounds, these results are 
not valid, at least in the range  $p_S\leq p<p_L$, where
$$p_L:=\begin{cases}
     1+\frac6{N-10}  &\hbox{ if }N>10, \\
     +\infty &\hbox{ if }N\leq10
   \end{cases} $$
is the critical exponent for the existence of
positive bounded non-constant radial steady states
of a rescaled equation (see \eqref{eq-v} below).
Notice that $p_L>p_{JL}$ if $N>10$.
Positive radial bounded solutions
of \eqref{eq-entire} which do depend on time
are provided by the following
results of \cite{FY11}.

\begin{theorem} \label{thm-FY}
\strut\hbox{\rm(i)}
If $p_S<p<p_L$, then there exists a positive radial bounded solution $u$ of
\eqref{eq-entire} satisfying $\lim_{|t|\to\infty}\|u(\cdot,t)\|_\infty=0$
(i.e.~$u$ is a homoclinic solution to the trivial steady state).
In addition, given $T\in\R$, $u$ also satisfies \eqref{u-Tdecay}.  

\strut\hbox{\rm(ii)}
If $p_S\leq p<p_{JL}$ and $\phi$ is a positive radial steady state of \eqref{eq-entire},
then there exists a positive radial bounded solution of \eqref{eq-entire}
satisfying 
$$\lim_{t\to-\infty}\|u(\cdot,t)-\phi\|_\infty=\lim_{t\to\infty}\|u(\cdot,t)\|_\infty=0$$
(i.e.~$u$ connects $\phi$ to zero).
\end{theorem}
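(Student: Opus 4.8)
The plan is to treat the two parts with different machinery: part (i) is about decay to the \emph{trivial} steady state and is driven by the self-similar structure, while part (ii) is a connection between \emph{nontrivial} steady states governed by the order structure of the stationary family. For part (i) I would first pass to the backward self-similar variables $y=x/\sqrt{T-t}$, $s=-\log(T-t)$ and $w(y,s)=(T-t)^{1/(p-1)}u(x,t)$, which transform \eqref{eq-Fuj} into the rescaled equation
$$w_s=\Delta w-\tfrac12\,y\cdot\nabla w-\tfrac1{p-1}\,w+w^p.$$
The hypothesis $p_S<p<p_L$ is exactly the range in which this equation admits a bounded positive non-constant radial steady state $W^*$ (this is how $p_L$ is defined). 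The associated backward self-similar solution
$$u^*(x,t)=(T-t)^{-1/(p-1)}\,W^*\!\Big(\tfrac{x}{\sqrt{T-t}}\Big),\qquad t<T,$$
solves \eqref{eq-Fuj} and satisfies $\|u^*(\cdot,t)\|_\infty=(T-t)^{-1/(p-1)}\|W^*\|_\infty$, so $\|u^*(\cdot,t)\|_\infty\to0$ as $t\to-\infty$ with the exact rate in \eqref{u-Tdecay}; it is, however, only an ancient solution, becoming singular (peaking) at $t=T$. The goal is therefore to produce an entire solution that inherits the self-similar behaviour of $u^*$ as $t\to-\infty$ but is globally defined and decays to $0$ as $t\to+\infty$.

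To achieve this I would use $u^*$ as a barrier. Solving the Cauchy problem with data taken slightly below $u^*$ at a time $t_0\ll0$, the comparison principle keeps the solution under $u^*$, so it automatically inherits the bound \eqref{u-Tdecay} and the limit $\|u(\cdot,t)\|_\infty\to0$ as $t\to-\infty$. Since $p$ is supercritical, a solution staying below the singular steady state $\phi_\infty$ cannot blow up, so, lying strictly below the peaking threshold $u^*$, the perturbed solution should continue globally past $t=T$ and relax to $0$. Letting the data approach $u^*$ along a suitable sequence and passing to the limit by parabolic estimates would then yield a nontrivial entire solution homoclinic to $0$. The main obstacle is precisely this threshold analysis: proving that data below $u^*$ give globally defined solutions that actually decay to $0$ (rather than stabilising at a positive steady state or entering another self-similar regime) and that the limiting solution is nontrivial and still obeys \eqref{u-Tdecay}. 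This demands careful control of the dynamics of the rescaled equation near $W^*$ (its linearised spectrum and local stable/unstable structure) together with zero-number (intersection-comparison) arguments against $\phi_\infty$ and $W^*$.

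For part (ii) I would work in the unscaled semiflow and exploit that, for $p_S\le p<p_{JL}$, the radial steady states are \emph{not} ordered: any two distinct $\phi_\alpha,\phi_\beta$ cross, so neither lies below the other. Take the given steady state $\phi=\phi_\alpha$, which is linearly unstable in this range, and consider the entire orbit on the downward branch of its unstable manifold; concretely this orbit stays strictly below $\phi_\alpha$ and is decreasing in $t$ (the stationary supersolutions $(1-\ep)\phi_\alpha$ exhibit the monotonicity of the forward evolution). Such an orbit converges as $t\to+\infty$ to a radial steady state $w$ with $0\le w\le\phi_\alpha$; if $w=\phi_\beta\neq0$ then $\phi_\beta\le\phi_\alpha$ everywhere, contradicting the crossing property valid for $p<p_{JL}$, whence $w\equiv0$. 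Running the orbit backward along the unstable manifold gives $u(\cdot,t)\to\phi_\alpha$ as $t\to-\infty$, producing the desired connection. The delicate points are establishing the instability of $\phi_\alpha$ and the gradient-like monotone behaviour of the connecting orbit, and turning the construction into a genuine heteroclinic emanating from $\phi_\alpha$ (rather than from $0$) at $-\infty$; the non-ordering of the $\phi_\alpha$'s, which is exactly what fails for $p\ge p_{JL}$, is the structural fact that makes the argument work in this range.
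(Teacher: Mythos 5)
First, a point of calibration: the paper contains no proof of Theorem~\ref{thm-FY} at all---it is quoted from Fila and Yanagida \cite{FY11}---so your sketch can only be compared with the argument there and judged on its own terms. Your part~(ii) is essentially the known route and is sound in outline: $(1-\eps)\phi$ is a strict stationary supersolution, so the solutions with data $(1-\eps_n)\phi$ decrease in time; a decreasing orbit bounded below converges to a nonnegative radial steady state lying below $\phi$, which must be $0$ because for $p_S\le p<p_{JL}$ two distinct positive radial steady states always intersect (all of them being rescalings of $\phi_1$, non-ordering of one pair gives crossing for every pair); a time-shift normalization (fix the time at which the sup-norm equals $\phi(0)/2$) and parabolic compactness then produce an entire decreasing orbit whose backward limit is a positive steady state below $\phi$, hence $\phi$ itself. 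One caveat: the ``unstable manifold'' framing is not innocent. On $\R^N$ the linearization $\Delta+p\phi^{p-1}$ has essential spectrum reaching $0$, and for $p_S<p<p_{JL}$ its potential has a supercritical Hardy tail $\sim pL^{p-1}|x|^{-2}$, so $\phi$ has infinite Morse index; there is no standard finite-dimensional unstable manifold to invoke, and the supersolution construction you mention in passing is the rigorous substitute, not a detail.

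Part~(i) is where the genuine gap lies. Comparison with the peaking solution $u^*$ controls your approximating solutions only on $[t_0,T)$: at $t=T$ the barrier becomes infinite at the origin and gives no information, and lying strictly below a solution that blows up at time $T$ in no way prevents blowup at time $T$ or later. This is a threshold problem, and the assertion that data below $u^*$ generate solutions that are global and decay to $0$ is precisely the content that has to be proved---it is where the actual work in \cite{FY11} lies---rather than a consequence of comparison. Your auxiliary principle, ``since $p$ is supercritical, a solution staying below $\phi_\infty$ cannot blow up,'' is both unjustified and inapplicable here. Unjustified, because where such statements hold (e.g.\ for radial solutions with $p_S<p<p_{JL}$) they rest on deep exclusions of type~I and type~II blowup below the singular steady state in the spirit of \cite{MM04,MM09}, not on supercriticality. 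Inapplicable, because by \eqref{eq:2} the Lepin profile $W^*$ crosses $\phi_\infty$ at least twice, so $u^*(\cdot,t)\not\le\phi_\infty$, and solutions below $u^*$ need not lie below $\phi_\infty$ either. Finally, even granting global existence of the approximating solutions, you must still rule out that they (and the limiting entire solution) stabilize to some positive steady state $\phi_\alpha$ instead of decaying to $0$, and verify that the limit is nontrivial and inherits \eqref{u-Tdecay}; the normalization trick from part~(ii) handles nontriviality, but the forward decay again requires non-ordering/zero-number analysis. Your orientation is correct---in self-similar variables the homoclinic does emanate from the profile $W^*$, consistently with Theorem~\ref{thm2}---but as it stands part~(i) is a plan whose essential steps are deferred, not a proof.
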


With the above results, the problem of the existence
of positive radial entire (nonstationary) solutions
is settled for all $p<p_L$.  One of  the primary objectives
of our present study is to address the problem in the range $p>p_L$. 
We have the following result, the main Liouville-type theorem
of this paper.  

\begin{theorem} \label{thmL1}
Assume $p>p_L$. Then any positive radial bounded solution of
\eqref{eq-entire} is a steady state.
\end{theorem}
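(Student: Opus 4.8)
The plan is to show that a positive, bounded, radial entire solution $u=u(r,t)$ cannot genuinely depend on $t$, by combining parabolic compactness, the monotonicity of the zero number against the ordered family of steady states, and the nonexistence (for $p>p_L$) of non-constant bounded radial steady states of the rescaled equation \eqref{eq-v}. First I would record the basic a priori information: since $u$ is bounded, interior parabolic estimates give uniform bounds on $u$ and its derivatives on $\R^N\times\R$, so the orbit $\{u(\cdot,t):t\in\R\}$ is relatively compact in $C^2_{\mathrm{loc}}$ and the $\alpha$- and $\omega$-limit sets $\mathcal A,\Omega$ are nonempty, compact, connected, and consist of bounded entire solutions. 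Because $p>p_{JL}$, the regular steady states $\{\phi_\alpha\}_{\alpha>0}$ are ordered and increase to the singular steady state $\phi_\infty$, so together with the trivial state they foliate the region lying below $\phi_\infty$; this foliation is the yardstick against which the profiles $u(\cdot,t)$ will be measured.

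The central tool is the zero number. For each $\alpha\in(0,\infty]$ the difference $u(\cdot,t)-\phi_\alpha$ solves a linear radial parabolic equation, so its number of sign changes $z_\alpha(t)$ on $(0,\infty)$ is finite on every compact time interval, nonincreasing in $t$, and strictly decreasing whenever $u(\cdot,t)-\phi_\alpha$ acquires a degenerate zero. The first step is to verify that $z_\alpha$ is in fact globally finite and, being a nonincreasing integer, eventually constant as $t\to\pm\infty$; this requires controlling $u(\cdot,t)-\phi_\alpha$ both at $r=0$ (where every function is regular) and as $r\to\infty$, where the delicate comparison of decay rates enters and where $p>p_{JL}$ is used to preclude oscillatory crossing near infinity. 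Once all the $z_\alpha$ are constant past some time and no further degenerate zeros may occur, the limit profiles inherit strong rigidity: I would argue that every element of $\mathcal A$ and $\Omega$ must then be a steady state, i.e.\ $\mathcal A,\Omega\subset\{0\}\cup\{\phi_\alpha\}_{\alpha>0}$.

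At this point $u$ is an entire solution whose backward and forward limits are steady states, that is, a connecting (possibly homoclinic) orbit, and it remains to exclude every nonstationary possibility. For this I would pass to the rescaled equation \eqref{eq-v}: a nonstationary bounded entire solution, suitably rescaled along a sequence $t_n\to\pm\infty$, should produce in the limit a positive, bounded, radial, \emph{non-constant} steady state of \eqref{eq-v}. By the very definition of the Lepin exponent, such steady states exist precisely when $p<p_L$ and fail to exist when $p>p_L$; this is exactly the mechanism that powers the time-dependent solutions of Theorem~\ref{thm-FY} in the complementary range. Hence for $p>p_L$ the extracted profile must be one of the constant steady states, the connection must be trivial, and $u(\cdot,t)$ is independent of $t$.

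The hard part, I expect, is the middle passage: upgrading the monotonicity of the zero number into the assertion that the limit sets contain only steady states, and then extracting a genuinely non-constant steady state of \eqref{eq-v} from a hypothetical nonstationary $u$. There is no globally defined Lyapunov functional on $\R^N$ for the unrescaled flow, so one cannot simply invoke a LaSalle-type principle; convergence to equilibria must be wrung out of the zero-number bookkeeping and the ordering of $\{\phi_\alpha\}$, with careful attention to the two scales $r\to0$ and $r\to\infty$ and to the borderline role of $\phi_\infty$. Arranging the rescaling so that it yields a non-constant, rather than merely a constant, steady state of \eqref{eq-v} is precisely where the hypothesis $p>p_L$ enters in an essential and quantitative way.
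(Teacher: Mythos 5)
Your final step contains the fatal gap. You claim that a nonstationary bounded entire solution $u$, rescaled via \eqref{vu}, would ``produce in the limit a positive, bounded, radial, \emph{non-constant} steady state of \eqref{eq-v}'', contradicting the definition of $p_L$. It would not. What the rescaling produces is a nonstationary entire solution $v$ of \eqref{eq-v} whose $\alpha$- and $\omega$-limit sets consist of steady states of \eqref{eq-v}; for $p>p_L$ these limit sets can only be $\{0\}$, $\{\kappa\}$ or $\{\phi_\infty\}$ (the singular state $\phi_\infty$, being unbounded, is not touched by the Lepin nonexistence result), and in fact one gets $\alpha(v)=\{\phi_\infty\}$ and $\omega(v)\in\{\{0\},\{\kappa\}\}$. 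No contradiction arises at this point, because nontrivial connections from $\phi_\infty$ to $0$ genuinely exist for every $p>p_S$, including $p>p_L$: the rescalings $\psi_\alpha(\rho,s)=e^{-s/(p-1)}\phi_\alpha(e^{-s/2}\rho)$ of the steady states $\phi_\alpha$ of \eqref{eq-Fuj} are themselves nonstationary solutions of \eqref{eq-v} connecting $\phi_\infty$ (as $s\to-\infty$) to $0$ (as $s\to\infty$). So the nonexistence of non-constant bounded steady states of \eqref{eq-v} constrains only the possible limit sets; it cannot distinguish a hypothetical nonstationary $u$ from a true steady state $\phi_\alpha$, since both rescale to connecting orbits with the same endpoints. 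This is exactly where the paper's real work lies: after showing (by a zero-number argument with the compactly supported solutions $\zeta(\cdot,\al)$ of \eqref{eq:1}, together with a result of \cite{PY03}) that $v<\phi_\infty$ and $\alpha(v)=\{\phi_\infty\}$, the paper linearizes \eqref{eq-v-rad} at $\phi_\infty$, proves that $v$ approaches $\phi_\infty$ backward in time along the principal eigenfunction $\vartheta_0$ at the precise rate $e^{\mu_0 s}$ (Proposition \ref{prop-est}, the technical heart of the proof), matches this expansion with that of a suitable $\psi_\alpha$, and invokes the uniqueness lemma of \cite{PY05} to conclude $v\equiv\psi_\alpha$, i.e.\ $u\equiv\phi_\alpha$. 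Nothing in your proposal supplies a substitute for this step.

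A secondary issue: your middle passage --- that the limit sets of the \emph{unrescaled} flow consist only of steady states, extracted from zero-number bookkeeping against the family $\{\phi_\alpha\}$ --- is asserted rather than proved, and you yourself flag it as the hard part; without a Lyapunov functional for the unrescaled equation this does require a genuine argument. The paper sidesteps it entirely by working in self-similar variables from the start, where Proposition \ref{prop-energy} provides the Lyapunov functional and the limit sets are automatically steady states of \eqref{eq-v-rad}. But even granting that step, your proof collapses at the final stage described above.
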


The  proof of this theorem is given in Section \ref{sec-proofs};
as it is rather involved, we precede it by an informal outline. 

Theorem \ref{thmL1} is not valid without the assumption of radial
symmetry. Indeed,  as indicated in a remark following Theorem~2.1 in
\cite{FY11}, one can find nontrivial entire solutions by extensions
of solutions in lower dimensions. To make this remark more precise,
fix any $p>p_L$. Then 
one can always find an integer $j\in \{3,\dots,N-1\}$ such that
$p$ is between $p_S(j)$ and $p_L(j)$, the Sobolev and Lepin exponents
in dimension $j$. Take now an entire solution $u(\tilde x,t)$, $\tilde
x\in\R^j$, as provided by  
Theorem~\ref{thm-FY}(i). 
Viewing $u$ as a function of $t$ and $x$,
constant in the last $N-j$ variables, we obtain a positive bounded
nonstationary entire solution of \eqref{eq-Fuj}.

Similarly as in the subcritical case,
the Liouville theorem for $p>p_L$ has important applications. For example, 
we will show in Section~\ref{sec-appl} that Theorem~\ref{thmL1}
can be used to prove the convergence of profiles
of both global and blowing-up solutions.

When nonstationary 
entire solutions  do exist, it is still an interesting
question if they can be classified in some way. 
Our next theorem gives a classification of entire solutions satisfying
\eqref{u-Tdecay}. Its conclusion is, in a sense,
complementary to Theorem~\ref{thm-FY}(i)
in the case $p_S<p<p_{JL}$.

\begin{theorem} \label{thm1}
If $p_S<p<p_{JL}$ and $u$ is a positive radial bounded solution of
\eqref{eq-entire} satisfying \eqref{u-Tdecay},
then \ $\lim\limits_{t\to\infty}\|u(\cdot,t)\|_\infty=0$
(hence, $u$ is a homoclinic solution to the trivial steady state).
\end{theorem}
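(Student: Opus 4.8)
The plan is to analyze the forward limit set $\omega(u)$ and show, by an intersection‑comparison (Sturm) argument against the singular steady state $\phi_\infty$, that it can only be $\{0\}$. Throughout, for a radial function $v=v(r)$ let $z(v)$ denote the number of sign changes of $r\mapsto v(r)$ on $(0,\infty)$, and I rely on the zero‑number monotonicity: since $u$ solves the radial equation and $\phi_\infty$ is a (singular) steady state, $t\mapsto z(u(\cdot,t)-\phi_\infty)$ is non‑increasing and drops strictly whenever $u(\cdot,t)-\phi_\infty$ has a multiple zero. Because $u$ is bounded, parabolic regularity makes the forward orbit $\{u(\cdot,t):t\ge 0\}$ relatively compact in $C^1_{loc}$, so $\omega(u)$ is nonempty, compact, connected and invariant; the theorem is equivalent to $\omega(u)=\{0\}$.

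First I would show that $z(t):=z(u(\cdot,t)-\phi_\infty)$ is finite for every $t$ and, being non‑increasing, is bounded by a fixed constant $k_0$ on $[t_0,\infty)$ for some $t_0\in\R$. This is where \eqref{u-Tdecay} enters. Passing to the backward self‑similar variables $y=x/\sqrt{T-t}$, $s=-\log(T-t)$, $w(y,s)=(T-t)^{1/(p-1)}u(x,t)$, the estimate \eqref{u-Tdecay} says precisely that $w$ is bounded as $s\to-\infty$; moreover $\phi_\infty$ is scale‑invariant, so $(T-t)^{1/(p-1)}\phi_\infty(x)=L|y|^{-2/(p-1)}=:\Phi_\infty(y)$ and hence $z(u(\cdot,t)-\phi_\infty)=z(w(\cdot,s)-\Phi_\infty)$. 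Using the boundedness of $w$ as $s\to-\infty$ together with the structure of the rescaled equation—its Gaussian‑weighted energy and known far‑field behaviour control $w-\Phi_\infty$ near $y=0$, where it is strongly negative, and as $|y|\to\infty$—one bounds $z(w(\cdot,s)-\Phi_\infty)$ uniformly for $s\ll 0$. Monotonicity then yields $z(t)\le k_0$ for all $t\ge t_0$.

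Next I would invoke the quasiconvergence of bounded radial solutions (the gradient structure of the equation, with a localized/weighted energy in place of the globally infinite Dirichlet energy): every element of $\omega(u)$ is a steady state. Since the only radial steady states are $0$, the regular ones $\phi_\alpha$, and the singular $\phi_\infty$, and since $u$ is uniformly bounded while $\phi_\infty(0)=+\infty$, the limit $\phi_\infty$ is excluded; thus $\omega(u)\subseteq\{0\}\cup\{\phi_\alpha:\alpha>0\}$. Suppose, for contradiction, that $\omega(u)\ne\{0\}$. Then there are $t_n\to\infty$ and $\alpha_0>0$ with $u(\cdot,t_n)\to\phi_{\alpha_0}$ in $C^1_{loc}((0,\infty))$. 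Here the hypothesis $p<p_{JL}$ is decisive: in this range the regular steady state spirals around $\phi_\infty$, so $\phi_{\alpha_0}-\phi_\infty$ has infinitely many transversal sign changes on $(0,\infty)$. Fixing any $K$, the function $\phi_{\alpha_0}-\phi_\infty$ already has at least $K$ such sign changes on a compact interval $[\delta,R]$; by $C^1_{loc}$ convergence these persist for $u(\cdot,t_n)-\phi_\infty$ when $n$ is large, whence $z(t_n)\ge K$. As $K$ was arbitrary, this contradicts $z(t_n)\le k_0$. Therefore $\omega(u)=\{0\}$, i.e. $\|u(\cdot,t)\|_\infty\to 0$ as $t\to\infty$.

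The main obstacle is the first step: making the intersection number with the singular steady state $\phi_\infty$ rigorous on the unbounded domain $(0,\infty)$—controlling zeros that could accumulate at the singular endpoint $r=0$ or escape to $r=\infty$—and showing that the backward decay \eqref{u-Tdecay} forces this number to remain finite as $t\to-\infty$. The self‑similar rescaling reduces this to a uniform bound on $z(w(\cdot,s)-\Phi_\infty)$ for the bounded solution $w$, but establishing that bound requires careful barrier and comparison estimates on the tail of $w$ together with the Sturm theory adapted to the singular steady state. The quasiconvergence input in the third step is likewise nontrivial, though available from the gradient structure of the equation.
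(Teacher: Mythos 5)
Your overall mechanism (a uniform intersection bound derived in backward self-similar variables, followed by a Sturmian contradiction exploiting the infinitely many intersections available when $p<p_{JL}$) is the right one, but your proof has two genuine gaps, one of which is an actual error rather than a missing detail. Your Step 2 asserts quasiconvergence of bounded radial solutions from ``the gradient structure of the equation, with a localized/weighted energy.'' No such result is available: the Gaussian-weighted energy is a Lyapunov functional only for the \emph{rescaled} equation \eqref{eq-v}, and truncating or weighting the Dirichlet energy for the unrescaled equation destroys monotonicity through boundary/weight terms. This is not a removable technicality, because precisely in the range $p_S<p<p_{JL}$ nontrivial bounded nonstationary entire solutions exist (Theorem \ref{thm-FY}); hence the invariance of $\omega(u)$ cannot by itself force its elements to be steady states, and any argument that tries to get quasiconvergence ``for free'' must fail. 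The paper replaces this step by zero-number arguments: it first shows $u(0,t)\to\alpha$ for some $\alpha\ge0$ (if $u(0,t)$ oscillated across a value $\alpha$, then $u(\cdot,t)-\phi_\alpha$ would have a degenerate zero at $r=0$ at infinitely many times, since both functions have vanishing $r$-derivative there, so $z(u(\cdot,t)-\phi_\alpha)$ would drop infinitely often), and then applies the same dropping argument to the entire solutions inside the invariant set $\omega(u)$ to conclude $\omega(u)=\{\phi_\alpha\}$. Note that this repair needs a uniform bound on intersections with \emph{regular} steady states $\phi_\alpha$, which the paper's Proposition \ref{prop1} supplies for all $\alpha\in(0,\infty]$ simultaneously; your Step 1 bounds only intersections with $\phi_\infty$ and therefore cannot feed into this fix.

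The second gap is that your Step 1 is only a sketch: you yourself flag the uniform bound on $z(w(\cdot,s)-\Phi_\infty)$ for $s\ll0$ as ``the main obstacle'' without supplying it, and boundedness of $w$ plus vague appeals to the weighted energy do not produce it. The paper's proof of Proposition \ref{prop1} shows what is actually required: one identifies the backward limit set $\alpha(v)$ of the rescaled solution as a singleton $\{w_a\}$ consisting of a bounded steady state of \eqref{eq-v-rad} (using the boundedness coming from \eqref{u-Tdecay}, the energy of the rescaled flow, and the discreteness of the set of steady states), observes that $w_a-\phi_\infty$ has finitely many zeros, all simple and confined to a compact interval, so the zero number stabilizes at this finite value as $s\to-\infty$ on compact sets, and then excludes extra intersections appearing from $\rho=\infty$ via the tail comparison of Proposition \ref{prop-comp} (for large $\rho$ both functions are small, the difference satisfies an equation with strictly negative zero-order coefficient, and any tail intersection pattern collapses). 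Once both points are repaired, your final contradiction---persistence under $C^1_{loc}$ convergence of arbitrarily many transversal intersections of $\phi_{\alpha_0}$ with $\phi_\infty$---is correct and is a legitimate variant of the paper's concluding step, which instead compares the limit $\phi_\alpha$ with a second regular steady state $\phi_\beta$ using $z(\phi_\alpha-\phi_\beta)=\infty$.
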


We believe that the same statement is valid if
$p_{JL}\le p<p_{L}$, but presently we can only prove this
under an additional condition (see Remark \ref{rm:en} below).  

We now consider ancient solutions.  In order to describe our results,
we introduce the backward similarity variables
$$ y:=\frac x{\sqrt{T-t}},\ s:=-\log(T-t), $$
and the rescaled function
\begin{equation} \label{vu}
\begin{aligned}
    v(y,s) &:= (T-t)^{1/(p-1)}u(x,t) \\
           &= e^{-s/(p-1)}u(e^{-s/2}y,T-e^{-s}).
 \end{aligned}
\end{equation}
Notice that if $u$ solves \eqref{eq-ancient}, then
$v$ is an entire solution of the equation
\begin{equation} \label{eq-v}
v_s=\Delta v-\frac y2\cdot\nabla v-\frac v{p-1}+v^p, 
\qquad y\in\R^N,\ s\in(-\infty,\infty).
\end{equation}

Problem \eqref{eq-v} has a positive constant steady state $v\equiv\kappa$
for all $p>1$ and the singular steady state $\phi_\infty$ whenever
$p(N-2)>N$. Positive bounded non-constant radial steady states of \eqref{eq-v}
exist if $p\in(p_S,p_L)$, 
while such solutions do not exist if $p>p_L$,
see \cite{L90,M09} and references therein.
In the case $p=p_L$, the nonexistence is stated in the main result of
\cite{M10}, however the proof given there
contains a gap, which does not seem to have been fixed yet.

We have the following result concerning ancient solutions.

\begin{theorem} \label{thm2}
Let either $p_S<p<p_{JL}$ or $p>p_L$. Let $u$ be a positive radial solution of
\eqref{eq-ancient}, and let $v$ denote the corresponding rescaled function.

If $u$ satisfies \eqref{u-Tdecay},
then $v$ is either a positive bounded radial steady state of \eqref{eq-v} 
or connects a positive bounded radial steady state $w$ of \eqref{eq-v}
to a nonnegative bounded radial steady state $\tilde w\ne w$ of
\eqref{eq-v}:
\begin{equation}\label{connection-w}
  \lim_{s\to-\infty}v(\cdot,s)=w,\quad
  \lim_{s\to\infty}v(\cdot,s)=\tilde w,
\end{equation}
with the convergence in  $C^1_{loc}(\R^N)$.

If \eqref{u-Tdecay} fails, then $v$ connects the singular steady state
$\phi_\infty$ to a nonnegative bounded radial steady state
$\tilde w$ of \eqref{eq-v}, that is, \eqref{connection-w} holds 
with $w=\phi_\infty$, where the convergence is 
in  $C^1_{loc}(\R^N\setminus\{0\})$ in the case of $w$ and in
$C^1_{loc}(\R^N)$ in the case of $\tilde w$.
\end{theorem}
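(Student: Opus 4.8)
The plan is to exploit the gradient structure of the rescaled equation \eqref{eq-v} together with the classification of its radial steady states. Writing $\rho(y):=e^{-|y|^2/4}$, equation \eqref{eq-v} is the negative $L^2_\rho$-gradient flow of the energy
\[
E[v]:=\int_{\R^N}\Bigl(\tfrac12|\nabla v|^2+\tfrac{v^2}{2(p-1)}-\tfrac{v^{p+1}}{p+1}\Bigr)\,\rho\,dy ,
\]
so that $E$ is a Lyapunov functional with $\frac{d}{ds}E[v(\cdot,s)]=-\int_{\R^N}|v_s|^2\rho\,dy\le0$. Consequently $E[v(\cdot,s)]$ is nonincreasing and the limits $E_\pm:=\lim_{s\to\pm\infty}E[v(\cdot,s)]$ exist. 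I would first record the elementary but decisive observation that, since $\|v(\cdot,s)\|_\infty=(T-t)^{1/(p-1)}\|u(\cdot,t)\|_\infty$, condition \eqref{u-Tdecay} is \emph{equivalent} to boundedness of $v$ as $s\to-\infty$; this is precisely what separates the two cases.

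For the forward limit ($s\to+\infty$, i.e.\ $t\to T^-$) the first task is the a priori bound $\sup_{s\ge s_0}\|v(\cdot,s)\|_\infty<\infty$, i.e.\ that the singularity of $u$ at $T$, if any, is of type~I. In the range $p_S<p<p_{JL}$ this is available from known results excluding type~II radial blowup; for $p>p_L$ I would instead derive it by a blow-up/rescaling argument in which Theorem~\ref{thmL1} is used to exclude the nonstationary bounded radial entire solution that a failure of the type~I bound would produce. Granting this bound, parabolic regularity gives uniform $C^1_{loc}$ estimates, and the LaSalle invariance principle for gradient systems shows that every point of the $\omega$-limit set is a radial steady state of \eqref{eq-v}. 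Using connectedness of the $\omega$-limit set together with the gradient structure---through isolatedness of the bounded radial steady states, or a Sturmian zero-number comparison where they fail to be isolated---I would conclude that $v(\cdot,s)\to\tilde w$ in $C^1_{loc}(\R^N)$ for a single nonnegative bounded radial steady state $\tilde w$.

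In Case~1, where $v$ is bounded as $s\to-\infty$, the same argument on $(-\infty,s_0]$ yields $v(\cdot,s)\to w$ in $C^1_{loc}(\R^N)$ for a single bounded radial steady state $w$. To see that $w$ is positive I would linearize at the trivial state: the operator $\Delta-\tfrac y2\cdot\nabla-\tfrac1{p-1}$ has strictly negative spectrum, so $0$ is asymptotically stable with trivial unstable manifold, and a positive ancient orbit cannot have $0$ as its $\alpha$-limit. If $v$ is nonstationary, then $\int_{-\infty}^{\infty}\!\int_{\R^N}|v_s|^2\rho\,dy\,ds>0$ forces $E_-=E[w]>E[\tilde w]=E_+$, hence $w\ne\tilde w$, and $v$ is the heteroclinic connection \eqref{connection-w}; if $v_s\equiv0$, then $v$ is itself a positive bounded radial steady state.

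Case~2, in which $v$ is unbounded as $s\to-\infty$, contains the main obstacle: identifying the backward limit with the singular steady state $\phi_\infty$. Here I would follow the zero number of $r\mapsto v(r,s)-\phi_\infty(r)$, which is finite and nonincreasing in $s$, and combine it with the unboundedness (forcing $v(0,s)\to\infty$ along some sequence $s\to-\infty$) and interior parabolic estimates valid away from the origin; this should pin down the profile and give $v(\cdot,s)\to\phi_\infty$ in $C^1_{loc}(\R^N\setminus\{0\})$, the loss of the origin being unavoidable since $\phi_\infty$ is singular there. The forward limit $\tilde w$ is handled exactly as before. The two genuinely delicate points---controlling this singular backward limit, and, for $p>p_L$, securing the type~I bound via Theorem~\ref{thmL1}---are what confine the statement to the ranges $p_S<p<p_{JL}$ and $p>p_L$; the intermediate regime $p_{JL}\le p\le p_L$ is left out both because the steady-state structure near $\phi_\infty$ is less favorable there and because the nonexistence of non-constant radial steady states of \eqref{eq-v} at $p=p_L$ is not yet established.
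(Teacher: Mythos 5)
Your proposal founders on its very first step: the a priori forward bound $\sup_{s\ge s_0}\|v(\cdot,s)\|_\infty<\infty$ (equivalently, that $u$ cannot blow up at $T$ with type~II) is not an available ingredient but is essentially Corollary~\ref{co1}, which the paper deduces \emph{from} Theorem~\ref{thm2}, not conversely. Neither of your two proposed derivations of this bound works. For $p_S<p<p_{JL}$, the exclusion of type~II blowup is known only for restricted classes of radial solutions (e.g.\ radially nonincreasing ones, \cite{MM04,MM09,M11a}); a general positive radial ancient solution need not belong to any such class, so you cannot simply invoke "known results". For $p>p_L$, type~II blowup genuinely occurs for some positive radial solutions (this range lies inside $p\ge p_{JL}$, see \cite{HV94,M11,MM11,S18}), so no local-in-time blow-up/rescaling argument can rule it out: rescaling a type~II solution via the doubling lemma produces a bounded radial entire solution, and Theorem~\ref{thmL1} only tells you that this limit is some steady state $\phi_\alpha$ --- which is not a contradiction. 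Indeed, this is exactly how the paper proves Theorem~\ref{thm-profile}, where the rescaling limit of a type~II solution is identified as $\phi_1$; the argument yields a convergence statement, not an absurdity. Your rescaling step makes no use of the ancientness of $u$, and ancientness is precisely what distinguishes solutions whose blowup must be type~I; so this step cannot be repaired in the form you propose.

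Because the whole forward analysis in your plan hinges on that bound, you are left with no mechanism to exclude $\omega(v)=\{\phi_\infty\}$, which is the real crux of the theorem. The paper sidesteps the bound entirely: it runs the limit-set analysis in $C^1_{loc}(0,\infty)$ (punctured at the origin), where compactness comes from the universal estimate \eqref{est-v} of Proposition~\ref{prop-ub}; the limit sets are singletons by discreteness of the family of bounded radial steady states \cite{PQ19} together with uniqueness of the singular one; then $\omega(v)=\{\phi_\infty\}$ is ruled out for $p>p_L$ by the energy inequality $E(\phi_\infty)>E(\kappa)$ (Proposition~\ref{propEinftykappa}), and for $p_S<p<p_{JL}$ under \eqref{u-Tdecay} by the uniform intersection bound of Proposition~\ref{prop1} combined with $z(\phi_\infty-\phi_\alpha)=\infty$ and the scaling invariance; only after the forward limit is known to be a bounded steady state is boundedness near the origin recovered (via the no-needle lemma of \cite{MM09}) and the convergence upgraded to $C^1_{loc}(\R^N)$. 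Note that your proposal contains none of these three ingredients (Proposition~\ref{prop1}, Proposition~\ref{propEinftykappa}, the no-needle lemma), and your treatment of the singular backward limit in Case~2 ("this should pin down the profile") is likewise only a sketch: identifying $\alpha(v)$ with $\phi_\infty$ requires the uniqueness of the unbounded steady state and the discreteness of the bounded ones, neither of which you establish or cite.
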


Thus, if $p_S<p<p_{JL}$ or $p>p_L$, the positive radial
ancient solutions  can be classified as heteroclinic connections
in self-similar variables, possibly with the singular backward limit.
This statement in the regular backward limit case (the first part of Theorem~\ref{thm2}) can be viewed as a (radial)
analogue of Theorem~\ref{thm-MZ} in the given supercritical ranges of
$p$. Indeed, using the rescaled function $v$, 
Theorem~\ref{thm-MZ} can be formulated as follows
(see \cite[Corollary~1.5]{MZ98}):

\begin{remark} \label{rem-MZv}{\rm
Let $p<p_S$ and $u$ be a positive solution of \eqref{eq-ancient}
satisfying \eqref{u-Tdecay}.
Then the rescaled function $v$ is either equal to the constant $\kappa$
or there exists $s_0\in\R$ such that $v(y,s)=\varphi(s-s_0)$,
where $\varphi(s):=\kappa(1+e^s)^{-1/(p-1)}$
(hence $v$ connects $\kappa$ to zero).}
\end{remark}

As an application of Theorem~\ref{thm2}, we now examine the  character
of blowup of ancient solutions. First we recall some terminology.
Let  $u$ be a positive radial solution of \eqref{eq-Fuj}
defined on a time interval $(0,T)$. This solution is
said to blow up at $t=T$ if $\|u(\cdot,t)\|_\infty\to\infty$ as $t\to T$. The
blowup is of type~I if the function
$(T-t)^{1/(p-1)}\|u(\cdot,t)\|_\infty$ stays
bounded as $t\to T$, otherwise it is of type~II.
As proved in \cite{GK87} (see also Corollary \ref{cor-subcrit} above),
type~II blowup never occurs if $p<p_S$ (this is also true
with  the assumption of radial symmetry dropped). The absence of
type~II blowup is also known for 
some classes of radial solutions (for example, radially nonincreasing solutions)
if $p_S\le p<p_{JL}$ \cite{MM04, MM09, M11a}. 
On the other hand, type~II blowup is known to occur for
some positive radial solutions 
if $p\geq p_{JL}$ (see \cite{HV94,M11,MM11,S18}). 
Let us now add the assumption
that $u$ is an ancient solution.
Our question is whether from the fact that $u$ has some ``past''
one can draw a definite conclusion about the type of
its blowup. If  $p_S<p<p_{JL}$ or $p>p_L$,
we can give a positive answer:
\begin{corollary}\label{co1}
Let either $p_S<p<p_{JL}$ or $p>p_L$.
Let $u$ be a positive radial 
solution of \eqref{eq-ancient}.
If $u$ blows up at $t=T$, then the blowup is of type~I.
\end{corollary}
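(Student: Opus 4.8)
The plan is to prove Corollary \ref{co1} by contradiction, deducing type~I blowup from the classification of ancient solutions in Theorem \ref{thm2}. Suppose $u$ is a positive radial ancient solution of \eqref{eq-ancient} blowing up at $t=T$, and suppose, for contradiction, that the blowup is of type~II, meaning $(T-t)^{1/(p-1)}\|u(\cdot,t)\|_\infty\to\infty$ as $t\to T$. The central idea is to relate the blowup type at $t=T$ to the forward limit $\tilde w$ of the rescaled solution $v(\cdot,s)$ as $s\to\infty$. Recall from \eqref{vu} that $v(y,s)=(T-t)^{1/(p-1)}u(x,t)$ with $s=-\log(T-t)$, so that $\|v(\cdot,s)\|_\infty=(T-t)^{1/(p-1)}\|u(\cdot,t)\|_\infty$. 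Hence type~I blowup is \emph{equivalent} to the boundedness of $\|v(\cdot,s)\|_\infty$ as $s\to\infty$, and type~II blowup forces $\limsup_{s\to\infty}\|v(\cdot,s)\|_\infty=\infty$.

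The key step is to invoke Theorem \ref{thm2} to control the forward behavior of $v$. First I would verify the hypotheses of that theorem: $u$ is a positive radial solution of \eqref{eq-ancient} with $p$ in one of the admissible ranges $p_S<p<p_{JL}$ or $p>p_L$, so Theorem \ref{thm2} applies directly. The theorem asserts that in all cases the forward limit satisfies $\lim_{s\to\infty}v(\cdot,s)=\tilde w$ in $C^1_{loc}(\R^N)$, where $\tilde w$ is a \emph{nonnegative bounded} radial steady state of \eqref{eq-v}. Whether or not \eqref{u-Tdecay} holds, the forward limit $\tilde w$ is a bounded steady state; in particular $\|\tilde w\|_\infty<\infty$. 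This is the crucial structural output: every admissible ancient solution, when rescaled around its blowup time, converges forward to a bounded profile.

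The contradiction then comes from reconciling the pointwise-to-uniform gap. The subtlety, and the main obstacle, is that Theorem \ref{thm2} only gives convergence $v(\cdot,s)\to\tilde w$ in $C^1_{loc}$, i.e.\ uniformly on compact sets, whereas type~II blowup is a statement about $\|v(\cdot,s)\|_\infty$, the \emph{global} sup norm. So local convergence to a bounded limit does not by itself preclude $\|v(\cdot,s)\|_\infty\to\infty$; the excess mass could in principle escape to spatial infinity. To close this gap I would establish a uniform a priori bound on $\|v(\cdot,s)\|_\infty$ that is incompatible with type~II blowup. The natural tool is a parabolic Liouville/bound of the kind encoded in Corollary \ref{cor-subcrit} when $p_S<p<p_{JL}$ under the appropriate monotonicity or symmetry constraints, or more robustly an argument showing that for radial solutions the supremum of $v$ is attained in a controlled region (for radial solutions a bounded-region bound on the maximum can be leveraged against the radial structure of the bounded steady state $\tilde w$). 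Concretely, I expect the proof to use that the only way $\|v(\cdot,s)\|_\infty$ can blow up is via concentration near the origin, which is exactly the compact region on which $C^1_{loc}$ convergence to the bounded $\tilde w$ gives control.

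Once the uniform bound $\limsup_{s\to\infty}\|v(\cdot,s)\|_\infty\le C<\infty$ is secured, it contradicts the type~II assumption $\limsup_{s\to\infty}\|v(\cdot,s)\|_\infty=\infty$. Therefore the blowup must be of type~I, completing the proof. The hardest part, to reiterate, is upgrading the local convergence furnished by Theorem \ref{thm2} to a uniform control on the sup norm near the blowup time; in the radial setting I would expect this to follow either from the no-concentration-away-from-origin property of radial type~II blowup profiles or from combining the forward convergence with the structure of the family $\{\phi_\alpha\}$ and $\phi_\infty$ to rule out unbounded concentration.
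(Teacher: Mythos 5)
Your overall strategy matches the paper's in outline: reduce type~I blowup to boundedness of $\|v(\cdot,s)\|_\infty$ as $s\to\infty$, use Theorem \ref{thm2} to control $v$ near the origin (the forward limit $\tilde w$ is bounded and the convergence holds in $C^1_{loc}(\R^N)$, hence uniformly on compact sets), and then bound $v$ away from the origin. You also correctly diagnose that $C^1_{loc}$ convergence alone cannot prevent the sup norm from escaping to infinity through large $|y|$. But you leave exactly that step unproven, and the one concrete tool you propose cannot supply it: Corollary \ref{cor-subcrit} requires either $u$ radial and $p<p_S$, or $p<p^*$, and both hypotheses fail throughout the ranges $p_S<p<p_{JL}$ and $p>p_L$ of the present corollary. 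Your remaining suggestions (a ``no-concentration-away-from-origin property'' of radial type~II profiles, or ``leveraging the radial structure of $\tilde w$'') are expectations, not arguments; moreover, concentration away from the origin is not obviously absurd a priori, so this is a genuine gap rather than a routine detail.

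The missing ingredient is already available in the paper: Proposition \ref{prop-ub}. Its estimate \eqref{est-v}, with $C_T=1$, holds for the rescaled function of \emph{any} positive radial solution of \eqref{eq-Fuj-rad} on $(-\infty,T)$ with $T<\infty$, and gives
\begin{equation*}
v(\rho,s)\le C(N,p)\bigl(\rho^{-2/(p-1)}+1\bigr)
\qquad\hbox{for all } \rho>0,\ s\in\R ,
\end{equation*}
so $v$ is bounded, uniformly in $s$, on $\{\rho\ge\rho_0\}$ for every $\rho_0>0$. This is a universal a priori bound proved by doubling/rescaling and a one-dimensional Liouville theorem; it is independent of Theorem \ref{thm2} and of any subcritical estimate, and it is precisely the statement that blowup of $\|v(\cdot,s)\|_\infty$ could only occur by concentration at the origin. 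Combining it with the bound on $\{\rho\le\rho_0\}$ coming from the $C^1_{loc}(\R^N)$ convergence $v(\cdot,s)\to\tilde w$ (or from $v$ itself being a bounded steady state, in the first alternative of Theorem \ref{thm2}) yields $\limsup_{s\to\infty}\|v(\cdot,s)\|_\infty<\infty$, i.e.\ type~I blowup. With \eqref{est-v} cited, your argument closes and coincides with the paper's proof; without it, the central step remains a conjecture.
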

This result follows directly from
Theorem~\ref{thm2}, which gives a bound on
$(T-t)^{1/(p-1)}u(\cdot,t)$ in any compact set,
and the universal estimate  \eqref{est-u}
proved in Proposition \ref{prop-ub} below,
which yields a bound on this function away
from the origin in $\R^N$.

\begin{remark}
  \label{rm:en}
  {\rm
    We conclude the introduction with a few remarks concerning
    exponents $p$ not covered by
  the above results. As previously mentioned, we expect  
  Theorem \ref{thm1} to hold in the range  $p_{JL}\le p<p_{L}$
  and can actually prove this (see Section \ref{sec-appl})
  under an additional condition. Specifically, the condition
  requires that each  classical positive radial steady
  state $w$  of \eqref{eq-v} satisfy the relation
  $E(w)<E(\phi_\infty)$, where
  $E$ is the standard energy functional for equation \eqref{eq-v} (see
  Subsection \ref{subs:energy}). 
  In Section \ref{sec-appl} we also  give some heuristics as to why the energy
  condition is plausible, but it is not clear to us if it can
  be proved by any  readily available tools. In the  borderline
  case $p=p_L$, the statement of Theorem \ref{thm1} is most likely
  void, for we do not expect any positive radial bounded solution
of (1.2) to exist---$p=p_L$ is not included in Theorem \ref{thmL1} for
several technical reasons.  In Theorem
\ref{thm2} (and Corollary~\ref{co1}), we left out the range
$p_{JL}\le p\le p_{L}$. Again,  we
believe that both statements of Theorem \ref{thm2}
are valid in this range as well, but can only prove it under
the above energy condition (see Remark \ref{rm:added}).
 }
\end{remark}

The rest of the paper is organized as follows. The
next section contains several preliminary results concerning the
energy functional for \eqref{eq-v}, zero number for differences of
solutions of equations \eqref{eq-entire}, \eqref{eq-ancient} and their
rescaled versions, and the $\alpha$- and $\omega$-limit sets of
solutions of \eqref{eq-v}. In the same preliminary section,
we also give universal a priori estimates on
radial entire and ancient solutions, and examine the relation of two radial
solutions of \eqref{eq-v} for large values of $\rho=|y|$.
The proof of Theorem \ref{thmL1} and its informal outline are
given in Section \ref{sec-proofs}. Section \ref{sec-proofs2} is
devoted to the proofs of Theorems \ref{thm1}, \ref{thm2}.
In Section \ref{sec-appl},
we discuss some applications of our results. In particular, 
we state and prove there a theorem on the convergence of 
profiles of blowup solutions.

\section{Preliminaries}
\label{sec-pre}
In the rest of this paper, we 
consider radial solutions only, although some of the results in this
preliminary section, notably those concerning the energy functional,
hold for nonradial solutions. 
Notice that radial solutions of \eqref{eq-entire} or \eqref{eq-ancient},
viewed as functions of $r$ and $t$,  satisfy the equation
\begin{equation} \label{eq-Fuj-rad}
 u_t=u_{rr}+\frac{N-1}r u_r+u^p
\quad\hbox{ in }\ (0,\infty)\times(-\infty,T) 
\end{equation}
with $T\leq\infty$,
and the rescaled functions $v=v(\rho,s)$ 
(where $\rho:=|y|$) 
satisfy the equation
\begin{equation} \label{eq-v-rad}
v_s=v_{\rho\rho}+\frac{N-1}\rho v_\rho-\frac \rho2 v_\rho-\frac v{p-1}+v^p
\quad\hbox{ in }\ (0,\infty)\times(-\infty,\infty).
\end{equation}

\subsection{Universal estimates}
\label{subsec-ue}
The following universal estimates
for positive radial solutions $u$ of \eqref{eq-entire},
\eqref{eq-ancient} and the corresponding
rescaled functions $v$ will play an important role in our analysis.
Notice first that if $v$ is any solution of \eqref{eq-v-rad}
and $u$ is defined by \eqref{vu}, then $u$ is a solution
of \eqref{eq-Fuj-rad}, hence any solution $v$ of \eqref{eq-v-rad}
corresponds to a solution $u$ of  \eqref{eq-Fuj-rad}.

\begin{proposition} \label{prop-ub}
Assume $p>1$. Then
there exists $C=C(N,p)>0$ with the following properties:
If $u=u(r,t)$ is a positive solution
of \eqref{eq-Fuj-rad} in $Q_T:=(0,\infty)\times(-\infty,T)$ 
with $T\leq\infty$, 
then
\begin{equation} \label{est-u}
u(r,t)+|u_r(r,t)|^{2/(p+1)}+|u_{rr}(r,t)|^{1/p}
\leq C(r^{-2/(p-1)}+m(t))
\ \hbox{ in }\ Q_{T},
\end{equation}
where 
$m(t)=(T-t)^{-1/(p-1)}$ if $T<\infty$ and $m(t)=0$ if $T=\infty$.
If $T<\infty$, then the corresponding rescaled function $v=v(\rho,s)$
satisfies
\begin{equation} \label{est-v}
v(\rho,s)+|v_\rho(\rho,s)|^{2/(p+1)}+|v_{\rho\rho}(\rho,s)|^{1/p}
\leq C(\rho^{-2/(p-1)}+C_T)
\ \hbox{ in }\ Q_\infty,
\end{equation}
where $C_T=1$.
If $u$ is an entire solution and 
$v$ is defined by \eqref{vu} with $T<\infty$,
then \eqref{est-v} is true with $C_T=0$.
\end{proposition}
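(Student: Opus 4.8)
The plan is to establish \eqref{est-u} for $u$ first, since the companion bound \eqref{est-v} for the rescaled function follows by a direct substitution. Indeed, from \eqref{vu} one has $\rho=r/\sqrt{T-t}$, $v_\rho=(T-t)^{1/(p-1)+1/2}u_r$ and $v_{\rho\rho}=(T-t)^{1/(p-1)+1}u_{rr}$, together with the identity $(T-t)^{1/(p-1)}r^{-2/(p-1)}=\rho^{-2/(p-1)}$. Feeding these into \eqref{est-u} shows that each of the three terms on the left of \eqref{est-v} is bounded by $C(\rho^{-2/(p-1)}+1)$, the extra summand $1$ coming from the $m(t)$ contribution; in the entire case $m\equiv0$, and this summand disappears, giving $C_T=0$. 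So everything reduces to \eqref{est-u}.

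To exploit the scaling invariance $u\mapsto\lambda^{2/(p-1)}u(\lambda\,\cdot,\lambda^2\,\cdot)$ of \eqref{eq-Fuj-rad}, I would introduce the scale-covariant quantity
$$ M:=u^{(p-1)/2}+|u_r|^{(p-1)/(p+1)}+|u_{rr}|^{(p-1)/(2p)}, $$
which transforms by $M_\lambda(r,t)=\lambda M(\lambda r,\lambda^2 t)$, i.e.\ it has the dimension of an inverse length. A short computation shows that \eqref{est-u} is equivalent to the bound $M(r,t)\le C\bigl(\tfrac1r+\tfrac1{\sqrt{T-t}}\bigr)$ on $Q_T$ (the second summand omitted when $T=\infty$). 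Since the right-hand side is comparable to $C/d\bigl((r,t),\Gamma\bigr)$, where $\Gamma:=\{r=0\}\cup\{t=T\}$ is the singular boundary and $d((r,t),(r',t')):=|r-r'|+|t-t'|^{1/2}$ is the parabolic distance, this is exactly the type of bound amenable to a doubling--rescaling argument.

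I would argue by contradiction. If \eqref{est-u} failed uniformly, there would be solutions $u_j$ on $Q_{T_j}$ and points $(r_j,t_j)$ with $M_j(r_j,t_j)\,d\bigl((r_j,t_j),\Gamma_j\bigr)\to\infty$. The doubling lemma of \cite{PQS07} then yields points $(\hat r_j,\hat t_j)$ with $M_j(\hat r_j,\hat t_j)\ge M_j(r_j,t_j)$, still satisfying $M_j(\hat r_j,\hat t_j)\,d(\cdot,\Gamma_j)\to\infty$, and with $M_j\le 2M_j(\hat r_j,\hat t_j)$ on the parabolic ball of $d$-radius $j/M_j(\hat r_j,\hat t_j)$ about $(\hat r_j,\hat t_j)$. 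Setting $\lambda_j:=1/M_j(\hat r_j,\hat t_j)$ and rescaling,
$$ \tilde u_j(y,\tau):=\lambda_j^{2/(p-1)}\,u_j(\hat r_j+\lambda_j y,\ \hat t_j+\lambda_j^2\tau), $$
the condition $M_j(\hat r_j,\hat t_j)\,d\to\infty$ forces $\hat r_j/\lambda_j\to\infty$ and (when $T_j<\infty$) $\sqrt{T_j-\hat t_j}/\lambda_j\to\infty$, so the domains exhaust $\R\times\R$. These functions solve $\tilde u_\tau=\tilde u_{yy}+\frac{(N-1)\lambda_j}{\hat r_j+\lambda_j y}\,\tilde u_y+\tilde u^p$ and carry the bounds $\tilde M_j\le 2$ on the ball of radius $j$ together with $\tilde M_j(0,0)=1$; the former bounds $\tilde u_j,\tilde u_{j,y},\tilde u_{j,yy}$, hence through the equation $\tilde u_{j,\tau}$, locally in $L^\infty$, so interior parabolic estimates give $C^{2,1}_{loc}$ convergence of a subsequence to a limit $\tilde u$.

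The decisive point is that the first-order coefficient $\frac{(N-1)\lambda_j}{\hat r_j+\lambda_j y}$ tends to $0$ locally uniformly (this is where $\hat r_j/\lambda_j\to\infty$ enters), so $\tilde u$ is an entire solution of the one-dimensional Fujita equation $\tilde u_\tau=\tilde u_{yy}+\tilde u^p$ on $\R\times\R$; it is bounded because $\tilde M\le2$ everywhere, nonnegative, and nontrivial since $\tilde M(0,0)=1$, hence positive by the strong maximum principle. But Theorem~\ref{thm-subcrit} in space dimension $N=1$ (where $p^*=+\infty$, so $p<p^*$ for every $p>1$) asserts that no such solution exists, giving the contradiction. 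I expect the main obstacle to be precisely this reduction: one must verify that zooming in at a point bounded away from the axis $r=0$ in rescaled units genuinely flattens the radial operator, so the supercritical obstruction in dimension $N$ is replaced by the unconditional one-dimensional Liouville theorem; the interior parabolic regularity needed to pass to the limit and the bookkeeping in the doubling lemma near the two pieces of $\Gamma$ are routine but must be handled with care.
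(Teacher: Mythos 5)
Your proposal is correct and follows essentially the same route as the paper's proof: the same scale-covariant quantity $M[u]=u^{(p-1)/2}+|u_r|^{(p-1)/(p+1)}+|u_{rr}|^{(p-1)/(2p)}$, a contradiction argument via the doubling lemma of \cite{PQS07a}, rescaling at the doubled points so that $r_k/\lambda_k\to\infty$ flattens the radial term, and passage to a positive entire solution of the one-dimensional equation contradicting the Liouville theorem of \cite{PQS07}. The only cosmetic differences are that the paper first reduces to $T<\infty$ (since $C$ is independent of $T$) rather than tracking both boundary pieces of $\Gamma$, and your explicit strong-maximum-principle step for positivity of the limit is left implicit there.
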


\begin{proof}
The proof is a straightforward modification
of the doubling and rescaling arguments
in \cite{PQS07} and the Liouville theorem
for positive solutions of \eqref{eq-entire} with $N=1$;
cp.~also \cite{BPQ11}.
First notice that \eqref{est-u} and \eqref{vu}
imply \eqref{est-v}, hence it is sufficient
to prove \eqref{est-u}.
In addition, \eqref{est-u} with $T=\infty$
is a consequence of \eqref{est-u} with $T<\infty$
since the constant $C$ does not depend on $T$.
Consequently, we may assume $T<\infty$.

Set
$$ M[u](r,t):=u(r,t)^{(p-1)/2}+|u_r(r,t)|^{(p-1)/(p+1)}+|u_{rr}(r,t)|^{(p-1)/2p}. $$
Assume that \eqref{est-u} is not true.
Then there exist $T_k$,
solutions $u_k$ of \eqref{eq-Fuj-rad} in $Q_{T_k}$ and
points $(r_k,t_k)\in Q_{T_k}$ such that
\begin{equation} \label{Mk}
M_k:=M[u_k](r_k,t_k) > 2k/d_k(r_k,t_k), \quad k=1,2,\dots,
\end{equation}
where $d_k(r,t):=\min(r,\sqrt{T_k-t})$ denotes the parabolic
distance of $(r,t)$ to the topological boundary of $Q_{T_k}$.
Then \cite[Lemma~5.1]{PQS07a} guarantees that 
after possible modification of $(r_k,t_k)$,
\eqref{Mk} holds and, in addition, 
we may
assume $M[u_k](r,t)\leq 2M_k$ whenever $|r-r_k|+\sqrt{|t-t_k|}<k/M_k$.
Set 
$$U_k(\rho,s):=\lambda_k^{2/(p-1)}u_k(r_k+\lambda_k\rho,t_k+\lambda_k^2s),$$
where $\lambda_k:=1/M_k$.
Then $U_k$ satisfies the equation
$$  U_s = U_{\rho\rho}+\frac{N-1}{r_k/\lambda_k+\rho}U_\rho+U^p, $$
$U_k,(U_k)_\rho,(U_k)_{\rho\rho}$ are bounded in $\{(\rho,s):|\rho|+\sqrt{|s|}<k\}$
by a constant independent of $k$, and
$U_k(0,0)+|(U_k)_\rho(0,0)|+|(U_k)_{\rho\rho}(0,0)|\geq c_0>0$.
Clearly, $r_k/\lambda_k\to\infty$.
Using standard parabolic estimates, we conclude that
(a suitable subsequence of)
$\{U_k\}$ converges to a positive solution of \eqref{eq-entire} with $N=1$.
But this contradicts the corresponding Liouville theorem, 
see \cite{PQS07}, for example.
\end{proof}


\subsection{Lyapunov functional}\label{subs:energy}

Equation \eqref{eq-v} can also be written
in the form
\begin{equation} \label{eq-v2}
 v_s=\frac1\varrho \nabla\cdot(\varrho\nabla v)-\frac v{p-1}+v^p,
\qquad y\in\R^N,\ s\in(-\infty,\infty),
\end{equation}
where $\varrho$ is the Gaussian weight defined by 
$$\varrho(y):=e^{-|y|^2/4}.$$
It is known that this problem possesses the Lyapunov functional
$$E(w)=\int_{\R^N}\Bigl(\frac12|\nabla w|^2+\frac1{2(p-1)} w^2
       -\frac1{p+1}w^{p+1}\Bigr)\varrho\,dy. $$
More precisely, we have the following proposition
(see \cite[Proposition~23.8]{QS07} for more details;
note that the assumption $v(\cdot,s_0)\in BC^1(\R^N)$ in \cite{QS07}
is satisfied for radial solutions of \eqref{eq-v2}
due to Proposition~\ref{prop-ub}
and the fact that we consider classical solutions).

\begin{proposition} \label{prop-energy}
Let $p>1$ and let $v$ be a positive radial solution of \eqref{eq-v2}.
Then $E(v(\cdot,s))\geq 0$ and
\begin{equation} \label{dE}
 \frac{d}{ds}E(v(\cdot,s)) = -\int_{\R^N}v_s^2(y,s)\varrho(y)\,dy,
\end{equation}
for all $s\in\R$.
\end{proposition}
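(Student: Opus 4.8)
The plan is to read \eqref{eq-v2} as the gradient flow of $E$ in the weighted space $L^2(\R^N,\varrho\,dy)$ and to let the super-exponential decay of $\varrho$ carry all the analysis. First I would prove the dissipation identity \eqref{dE} by a direct computation. Differentiating under the integral sign gives
$$\frac{d}{ds}E(v(\cdot,s)) = \int_{\R^N}\Bigl(\nabla v\cdot\nabla v_s + \frac{1}{p-1}\,v\,v_s - v^p v_s\Bigr)\varrho\,dy.$$
A weighted integration by parts on the first term, rewriting $\int \varrho\,\nabla v\cdot\nabla v_s\,dy = -\int v_s\,\nabla\cdot(\varrho\nabla v)\,dy$, turns the integrand into $\bigl(-\tfrac1\varrho\nabla\cdot(\varrho\nabla v)+\tfrac{1}{p-1}v-v^p\bigr)v_s\,\varrho$, which by \eqref{eq-v2} equals exactly $-v_s^2\varrho$. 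This yields \eqref{dE} and, in particular, the monotonicity of $s\mapsto E(v(\cdot,s))$.

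For the nonnegativity $E\ge0$ I would argue by contradiction using a Giga--Kohn-type differential inequality. Set $H(s):=\tfrac12\int_{\R^N}v^2\varrho\,dy$, which is finite for each $s$ by Proposition \ref{prop-ub}. Multiplying \eqref{eq-v2} by $v\varrho$ and integrating by parts gives
$$H'(s) = -\int_{\R^N}|\nabla v|^2\varrho\,dy - \frac{1}{p-1}\int_{\R^N} v^2\varrho\,dy + \int_{\R^N} v^{p+1}\varrho\,dy,$$
and eliminating the gradient term through the definition of $E$ produces the key identity
$$H'(s) = -2\,E(v(\cdot,s)) + \frac{p-1}{p+1}\int_{\R^N}v^{p+1}\varrho\,dy.$$
By H\"older's inequality against the finite measure $\varrho\,dy$, with exponents $(p+1)/2$ and $(p+1)/(p-1)$, one gets $\int_{\R^N}v^{p+1}\varrho\,dy \ge c\,H^{(p+1)/2}$ for some $c=c(N,p)>0$. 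If $E(v(\cdot,s_1))<0$ for some $s_1$, then by monotonicity $E(v(\cdot,s))<0$ for all $s\ge s_1$, so both terms on the right are positive and
$$H'(s)\ge c\,H(s)^{(p+1)/2},\qquad s\ge s_1.$$
Since $(p+1)/2>1$ and $H(s_1)>0$, this superlinear inequality forces $H$ to blow up at some finite $s^\ast>s_1$, contradicting the finiteness of $H(s^\ast)$ for the globally-defined classical solution $v$. Hence $E(v(\cdot,s))\ge0$ for every $s$.

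I expect the main obstacle to be not the algebra but its justification, which is precisely where Proposition \ref{prop-ub} does the heavy lifting: the universal pointwise bounds on $v,v_\rho,v_{\rho\rho}$ (bounded for large $\rho$, and bounded near the origin by smoothness of the classical solution) combined with the Gaussian weight guarantee absolute convergence of every integral above, legitimize differentiation under the integral sign on compact $s$-intervals, and make the boundary terms over $\partial B_R$ — of order $R^{N-1}e^{-R^2/4}$ — vanish as $R\to\infty$. Interior parabolic regularity will additionally be needed to control $v_s$ and $\nabla v_s$ locally. The one genuinely non-computational ingredient is the blowup argument for $E\ge0$, which crucially exploits that $v$ is defined for all $s\in\R$; without global existence in $s$ the inequality $H'\ge cH^{(p+1)/2}$ would yield no contradiction.
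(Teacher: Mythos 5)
Your proposal is correct and follows essentially the same route as the paper: the paper's ``proof'' is simply a citation of \cite[Proposition~23.8]{QS07} (together with the observation that its $BC^1$ hypothesis is supplied by Proposition~\ref{prop-ub}), and the proof of that cited result is exactly your argument --- the weighted integration-by-parts computation giving \eqref{dE}, followed by the Giga--Kohn convexity/blowup argument showing that $E<0$ at some time would force $H(s)=\tfrac12\int v^2\varrho\,dy$ to blow up in finite $s$, contradicting global existence. Your closing paragraph on justifying the integrations by parts and differentiation under the integral via the bounds of Proposition~\ref{prop-ub} and the Gaussian weight is precisely the point the paper itself singles out when invoking the reference.
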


Notice also that
\begin{equation} \label{Eofw}
E(w)=\Bigl(\frac12-\frac1{p+1}\Bigr)\int w^{p+1}\varrho\,dy>0
\end{equation}
for any bounded positive radial steady state $w$ of \eqref{eq-v2} (or \eqref{eq-v})
and this also remains true for the singular steady state
$\phi_\infty$ if $p>p_S$ since 
$\phi_\infty\in H^1_{loc}\cap L^{p+1}_{loc}(\R^N)$
for such $p$.

It is known that if $p>p_S$ and $w$ is a positive radial non-constant steady state
of \eqref{eq-v2} or $w=\phi_\infty$, then $E(w)>E(\kappa)$,
see \cite[Remark 1.17]{MM11}.
In particular,
\begin{equation} \label{Einftykappa}
E(\phi_\infty)>E(\kappa).
\end{equation}
The proof of \eqref{Einftykappa} in \cite{MM11} is quite long and involved.
In the proof of the following proposition we use a simpler and more
direct argument 
to prove \eqref{Einftykappa}
(cf. also the beginning of Subsection~3.3 in \cite{MM11}).
This argument enables us also to show that the ratio $E(\phi_\infty)/E(\kappa)$
is monotone with respect to $p$.

\begin{proposition} \label{propEinftykappa}
  Let $N>2$ and $F:(p_S,\infty)\to\R$
  denote the function $p\mapsto E(\phi_\infty)/E(\kappa)$.
Then $F$ is decreasing, $\lim\limits_{p\to p_S}F(p)=\infty$ and\/
$\lim\limits_{p\to\infty}F(p)=1$.
\end{proposition}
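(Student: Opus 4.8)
The plan is to compute $E(\phi_\infty)$ and $E(\kappa)$ explicitly as functions of $p$ and then study the resulting ratio. First I would observe that both quantities come with the formula \eqref{Eofw}, namely $E(w)=\left(\frac12-\frac1{p+1}\right)\int w^{p+1}\varrho\,dy$, so the ratio is simply
\begin{equation} \label{ratio-plan}
  F(p)=\frac{\int_{\R^N}\phi_\infty^{p+1}\varrho\,dy}{\int_{\R^N}\kappa^{p+1}\varrho\,dy}.
\end{equation}
The denominator is trivial: $\kappa=(p-1)^{-1/(p-1)}$ is a constant, so $\int\kappa^{p+1}\varrho\,dy=\kappa^{p+1}\int_{\R^N}\varrho\,dy=\kappa^{p+1}(4\pi)^{N/2}$. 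For the numerator I would substitute $\phi_\infty(y)=L|y|^{-2/(p-1)}$ with $L=\bigl(\frac{2}{(p-1)^2}((N-2)p-N)\bigr)^{1/(p-1)}$, giving $\int\phi_\infty^{p+1}\varrho\,dy=L^{p+1}\int_{\R^N}|y|^{-2(p+1)/(p-1)}e^{-|y|^2/4}\,dy$. This integral converges precisely because $p>p_S$ keeps the exponent $-2(p+1)/(p-1)$ above $-N$ near the origin (the Gaussian controls infinity), and I would evaluate it with polar coordinates and the Gamma function, obtaining something of the form $c_N\,\Gamma\!\left(\frac{N}{2}-\frac{p+1}{p-1}\right)$ times a power of $4$.

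Once the explicit formula for $F(p)$ is in hand, the three claims become analysis of one elementary-but-transcendental expression. The two limits I would attack directly: as $p\to\infty$ both $L\to 1$ and $\kappa\to1$, while the Gamma factor tends to $\Gamma(N/2)$ matching $\int\varrho\,dy$, so $F(p)\to1$; as $p\to p_S=(N+2)/(N-2)$, the argument $\frac{N}{2}-\frac{p+1}{p-1}$ of the Gamma function approaches $0$ from above, where $\Gamma$ blows up, yielding $F(p)\to\infty$. The monotonicity is the substantive part. Rather than differentiating the full messy product, I expect it is cleaner to write $\log F(p)$ as a sum of a few terms—one from $\log(L^{p+1}/\kappa^{p+1})$ and one from $\log\Gamma\!\left(\frac{N}{2}-\frac{p+1}{p-1}\right)$ (the constants drop out)—and show $\frac{d}{dp}\log F<0$. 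The Gamma contribution is the delicate one: as $p$ increases, $\frac{p+1}{p-1}$ decreases, so the Gamma argument increases toward $N/2$, and near the lower end $\Gamma$ is decreasing on its relevant interval, which should push $F$ down. I would use the digamma function $\psi=\Gamma'/\Gamma$ and its monotonicity to control the sign.

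The main obstacle I anticipate is the monotonicity, not the limits. The difficulty is that $\log F$ combines an algebraic term in $p$ (from $L$ and $\kappa$) with the digamma term, and their derivatives have competing signs a priori; proving the total derivative is negative for all $p>p_S$ may require a careful estimate rather than a one-line sign check. A promising route is to reparametrize by $q:=\frac{p+1}{p-1}\in(1,N/2)$, which is a decreasing bijection of $p\in(p_S,\infty)$; under this change the Gamma argument is simply $\frac{N}{2}-q$ and the whole expression for $F$ should simplify considerably, turning monotonicity in $p$ into monotonicity in $q$ that may be checked by recognizing a convexity or log-convexity property of $\Gamma$. If a clean closed form still resists a direct sign argument, I would fall back on the integral representation of $\log F$ as $(\frac12-\frac1{p+1})^{-1}$-independent and differentiate under the integral sign in \eqref{ratio-plan}, using that differentiating $\phi_\infty^{p+1}\varrho$ in $p$ introduces a $\log|y|$ factor whose sign is controlled relative to the mean, so that $F'(p)$ becomes the covariance of $\log|y|$ against a positive weight—an approach that often makes monotonicity transparent via a correlation-inequality argument.
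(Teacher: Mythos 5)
Your setup is exactly the paper's: using \eqref{Eofw} and polar coordinates, and reparametrizing by $\xi:=(p+1)/(p-1)\in(1,N/2)$, one finds $F(p)=f(\xi)$ with
\[
f(\xi)=\frac{\Gamma(N/2-\xi)}{\Gamma(N/2)}\Bigl(\frac{N-(1+\xi)}{2}\Bigr)^{\xi},
\]
and your treatment of the limit at $p_S$ (pole of $\Gamma$ at $0$, remaining factors bounded away from $0$ and $\infty$) is fine. But the monotonicity---which you yourself flag as the substantive part---is a genuine gap, and none of the tools you propose closes it. One must show $-\psi(N/2-\xi)+\log\frac{N-1-\xi}{2}-\frac{\xi}{N-1-\xi}>0$ on $(1,N/2)$, where $\psi=\Gamma'/\Gamma$ is the digamma function. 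Monotonicity of $\psi$ (equivalently log-convexity of $\Gamma$) says nothing about this sign, and even the standard bound $\psi(z)<\log z$ is too weak: it reduces the claim to $\log\bigl(1+\frac{\xi-1}{N-2\xi}\bigr)\ge\frac{\xi}{N-1-\xi}$, which fails as $\xi\to1^+$ (the left side tends to $0$, the right to $\frac1{N-2}>0$). The paper's proof hinges on the sharper estimate $\psi(z)<\log z-\frac1{2z}$ (Abramowitz--Stegun 6.3.21), for which the two sides balance exactly at $\xi=1$, followed by a chain of elementary substitutions ($\eta:=(\xi-1)/(N-2\xi)$, reduction to $\log(1+\eta)\ge\eta/(1+\eta)$). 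Your fallback via differentiation under the integral sign does not escape this difficulty: the weighted mean $\int\log|y|\,|y|^{-2\xi}\varrho\,dy\big/\int|y|^{-2\xi}\varrho\,dy$ equals $\log 2+\frac12\psi(N/2-\xi)$, so the proposed ``covariance'' argument merely reproduces the same digamma quantity competing against the algebraic term coming from the $p$-dependence of $L$ and $\kappa$; no correlation inequality decides the sign for free.

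Separately, your argument for $\lim_{p\to\infty}F(p)=1$ is wrong as stated, even though the value is correct. Since $\xi\to1$, the Gamma factor tends to $\Gamma(N/2-1)$, not $\Gamma(N/2)$; and $L\to1$, $\kappa\to1$ gives no information about $L^{p+1}/\kappa^{p+1}$, which is a $1^\infty$ form converging to $2(N-2)$, not to $1$. The limit equals $1$ only through the exact cancellation $\frac{\Gamma(N/2-1)}{4\,\Gamma(N/2)}\cdot 2(N-2)=1$. This is easy to repair by just computing $f(1)$, but the intermediate claims you give are false as written.
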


\begin{proof}
Set $\xi:=(p+1)/(p-1)$. Then a straightforward calculation based on
\eqref{Eofw} shows $F(p)=f(\xi)$, where
$$f:(1,N/2)\to\R:\xi\mapsto \frac{\Gamma(N/2-\xi)}{\Gamma(N/2)}
\Bigl(\frac{N-(1+\xi)}2\Bigr)^\xi,$$
and $\Gamma$ stands for the standard gamma function. 
Since $\lim_{\xi\to1}f(\xi)=1$ and $\lim_{\xi\to N/2}f(\xi)=\infty$,
it is sufficient to prove $f'(\xi)>0$ for $\xi\in(1,N/2)$.
This inequality is equivalent to
\begin{equation} \label{ineq_psi}
 \psi\Bigl(\frac N2-\xi\Bigr)<\log\frac{N-(1+\xi)}2-\frac \xi{N-(1+\xi)},
\end{equation}
where 
$$ \psi(z):=\frac{\Gamma'(z)}{\Gamma(z)}<\log z-\frac1{2z} 
  \quad\hbox{ for }\ z>0,$$
see \cite[6.3.21]{AS}.
Consequently, to prove \eqref{ineq_psi} it is sufficient to show
$$ \log\frac{N-2\xi}2-\frac1{N-2\xi}
\leq  \log\frac{N-(1+\xi)}2-\frac \xi{N-(1+\xi)}, $$
which is equivalent to
$$ \log\Bigl(1+\frac{\xi-1}{N-2\xi}\Bigr)\geq \frac \xi{N-(1+\xi)}-\frac1{N-2\xi}.$$
Setting $\eta:=(\xi-1)/(N-2\xi)$, the last inequality is equivalent to
$$ \log(1+\eta) \geq \eta-\frac \eta{\eta+1}\frac{N\eta+1}{N-2}\qquad (\eta>0).$$
Using the estimate $(N\eta+1)/(N-2)\geq \eta$ we see that it is sufficient to show
$$\log(1+\eta)\geq \frac \eta{\eta+1}\qquad (\eta>0).$$
The last inequality is easy to prove (consider the derivatives 
of the left and right hand sides, for example).
\end{proof}

\subsection{Zero number}
Recall that radial solutions of \eqref{eq-entire} or \eqref{eq-ancient}
satisfy equation \eqref{eq-Fuj-rad} with $T\leq\infty$,
and the boundary condition $u_r(0,t)=0$,
and the rescaled functions $v=v(\rho,s)$ 
satisfy equation \eqref{eq-v-rad}
and the boundary condition $v_\rho(0,s)=0$.
The singular steady state $\phi_\infty=\phi_\infty(r)$ satisfies
both \eqref{eq-Fuj-rad} and \eqref{eq-v-rad}
and the boundary condition $\phi_\infty(0)=\infty$.

If $u_1,u_2$ are radial solutions of \eqref{eq-entire} or \eqref{eq-ancient}
(or $u_1,u_2$ are radial solutions of \eqref{eq-v}),
then $U:=u_1-u_2$ solves the linear equation
\begin{equation} \label{eq-diff}
U_t =U_{rr}+\frac{N-1}rU_r-c\frac r2 U_r+ fU
\quad\hbox{ in }\ (0,\infty)\times(-\infty,T)
\end{equation}
and satisfies the boundary condition $U_r(0,t)=0$, 
where $T\leq\infty$,
$c\in\{0,1\}$ and 
$f=f(r,t)$ is in $L^\infty((0,\infty)\times(t_1,t_2))$
whenever $-\infty<t_1<t_2<T$
(the boundedness comes from Proposition~\ref{prop-ub}
and the fact that we consider classical solutions).
If $u_1\equiv\phi_\infty$ and $u_2$ is as above,
then $U$ satisfies \eqref{eq-diff}, 
the boundary condition $U(0,t)=\infty$,
and $f\in L^\infty((\delta,\infty)\times(t_1,t_2))$
for any $\delta>0$.

If $I\subset[0,\infty)$ is an interval and
$g:I\to\R$ is a continuous function, we
denote by $z_I(g)$ the number of zeros of $g$ in $I$. 
We also set $z(g)=z_{(0,\infty)}g$.

The next proposition follows from zero number theorems of
\cite{CP:96, Ma99}.

\begin{proposition} \label{prop-zero}
Let $U$ be as above, $U\not\equiv0$, $t_1<t_2<T$. Then we have:

\strut\hbox{\rm(i)}
The function $t\mapsto z(U(\cdot,t))$ is nonincreasing.
If $z(U(\cdot,t_1))<\infty$ and
\begin{equation} \label{zero-ass}
U(r_0,t_0)=U_r(r_0,t_0)=0 \hbox{ for some }
 r_0\geq0 \hbox{ and }t_0\in(t_1,t_2),
\end{equation}
then 
\begin{equation} \label{zero-concl}
z(U(\cdot,t))>z(U(\cdot,s)) \hbox{ for all }t_1<t<t_0<s<t_2.
\end{equation}

\strut\hbox{\rm(ii)}
Assume $R>0$, $U(R,t)\ne0$ for all $t\in[t_1,t_2]$.
Then the function $t\mapsto z_{(0,R)}(U(\cdot,t))$
is nonincreasing and finite. If \eqref{zero-ass} is true for some $r_0\in[0,R)$,
then  \eqref{zero-concl} is true with $z$ replaced by $z_{(0,R)}$.
\end{proposition}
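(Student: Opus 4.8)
The statement to prove is Proposition~\ref{prop-zero}, which asserts the standard
monotonicity and drop properties of the zero number $z(U(\cdot,t))$ for solutions $U$ of the
linear parabolic equation \eqref{eq-diff}. Since the proposition explicitly says it
\emph{follows from zero number theorems of} \cite{CP:96, Ma99}, the plan is not to
reprove those theorems from scratch but to verify that the hypotheses of the cited
results are met in our setting and to handle the two features that are slightly
nonstandard here: the singular spatial weight coming from the radial Laplacian (the
$\frac{N-1}{r}U_r$ term, which blows up at $r=0$), and the possibly singular boundary
condition $U(0,t)=\infty$ in the case $u_1\equiv\phi_\infty$.

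\medskip

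The plan is as follows. First I would record the precise structure of \eqref{eq-diff}:
it is a one-dimensional linear parabolic equation
$U_t = U_{rr} + b(r)\,U_r + f(r,t)\,U$ with $b(r) = \frac{N-1}{r} - c\frac r2$ and
$f\in L^\infty_{loc}$ by Proposition~\ref{prop-ub}. Away from $r=0$ the coefficient
$b$ is smooth and bounded on compact subsets, so on any strip
$(\delta,R)\times(t_1,t_2)$ with $\delta>0$ the classical Sturmian zero-number theory of
\cite{Ma99} (and \cite{CP:96}) applies directly: $t\mapsto z_{(\delta,R)}(U(\cdot,t))$
is finite and nonincreasing, and it drops strictly whenever $U$ develops a
multiple zero in the interior. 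The entire content of the proposition is thus obtained
once I control what happens at the two endpoints $r=0$ and $r=R$.

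\medskip

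For part~(ii), the right endpoint is easy: since $U(R,t)\neq 0$ for all
$t\in[t_1,t_2]$, no zero can enter or leave the interval $(0,R)$ through $r=R$, so the
count on $(0,R)$ changes only through interior collisions or through the left endpoint.
At the left endpoint $r=0$ there are two cases. In the regular case the Neumann
condition $U_r(0,t)=0$ holds; one extends $U$ evenly across $r=0$ (the even reflection
solves an equation of the same type, the singular drift term $\frac{N-1}{r}U_r$ being
harmless after reflection because $U_r$ vanishes to appropriate order at the origin),
and then a zero of $U$ sitting at $r=0$ is automatically a multiple zero of the extended
function, which is exactly the situation covered by \eqref{zero-ass} with $r_0=0$ and
forces a strict drop. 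This is the mechanism behind allowing $r_0=0$ in
\eqref{zero-ass}. In the singular case $u_1\equiv\phi_\infty$, one has
$U(0,t)=\infty>0$, so $U$ is bounded away from $0$ near the origin and no zeros
accumulate at $r=0$; here I would only need $f\in L^\infty((\delta,\infty)\times(t_1,t_2))$,
which is precisely what Proposition~\ref{prop-ub} provides, and work on
$(\delta,R)$ for $\delta$ small. Part~(i) then follows by letting $R\to\infty$:
$z(U(\cdot,t)) = \lim_{R\to\infty} z_{(0,R)}(U(\cdot,t))$ is a supremum of nonincreasing
integer-valued functions, hence nonincreasing, and the strict-drop conclusion
\eqref{zero-concl} is inherited by choosing $R$ large enough to contain the multiple
zero $r_0$ and separating the times $t_1<t<t_0<s<t_2$.

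\medskip

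The main obstacle, and the only place requiring genuine care rather than citation, is
the behaviour at the singular point $r=0$ in the regular case: one must be sure that the
unbounded drift coefficient $\frac{N-1}{r}$ does not create spurious zeros or invalidate
the multiple-zero drop at the origin. I expect this to be dispatched cleanly by the even
reflection argument, after which the reflected problem is a standard (non-singular)
parabolic equation on an interval symmetric about $0$ and the theorems of \cite{Ma99}
apply verbatim; the evenness guarantees that an interior zero at $0$ is a double zero,
giving the strict drop asserted when $r_0=0$ is allowed in \eqref{zero-ass}. The
finiteness of $z_{(0,R)}$ in (ii) likewise comes from the cited theory once the endpoints
are controlled, and for (i) finiteness is not claimed at any single fixed time, so no
additional argument is needed there.
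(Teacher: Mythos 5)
Your overall plan---verify the hypotheses of the cited theorems of \cite{CP:96, Ma99} and glue the local statements together---is in the spirit of the paper, which in fact offers nothing beyond that citation as its ``proof''. Several of your verifications are sound: the right endpoint in part (ii), and the singular case $u_1\equiv\phi_\infty$, where $U=\phi_\infty-u_2$ is bounded away from zero near $r=0$ so that one may work on $(\delta,R)$.

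However, your derivation of part (i) by letting $R\to\infty$ has a genuine gap. You assert that $z(U(\cdot,t))=\lim_{R\to\infty}z_{(0,R)}(U(\cdot,t))$ is ``a supremum of nonincreasing integer-valued functions, hence nonincreasing''. But $t\mapsto z_{(0,R)}(U(\cdot,t))$ is only known to be nonincreasing---this is part (ii)---under the hypothesis $U(R,\tau)\neq0$ for all $\tau$ in the time interval, and nothing in your argument produces even one sequence $R_n\to\infty$ along which this holds. Concretely, the zero set of $U$ could a priori contain a curve sweeping out to $r=\infty$ inside the time interval $[t,s]$; then every horizontal segment $\{R\}\times[t,s]$ with $R$ large meets a zero, no truncation is admissible, and zeros ``entering from spatial infinity'' could increase $z$. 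Ruling this out requires using what $U$ actually is: both $u_1$ and $u_2$ obey the universal decay estimate \eqref{est-u} (resp.\ \eqref{est-v}), which makes them small for large $r$, and it is exactly this decay that is exploited---via a comparison argument of the type in Proposition~\ref{prop-comp}, or directly in the results of \cite{Ma99}, which are formulated on $(0,\infty)$ for precisely this class of differences---to exclude zeros coming in from infinity. Your limiting argument, as written, never touches this issue. A second, lesser point: after even reflection across $r=0$ the drift $\frac{N-1}{r}$ is still unbounded near the origin, so the reflected problem is not a ``standard (non-singular) parabolic equation'' to which the one-dimensional theory applies verbatim; the rigorous treatment of the Neumann endpoint (including the strict drop when $r_0=0$ in \eqref{zero-ass}) is precisely the content of \cite{CP:96}, so you should lean on that citation rather than on the reflection heuristic.
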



\subsection{Steady states and limit sets of \eqref{eq-v-rad}}
\label{subsec-ss}

In what follows  we  assume that $v$ is a positive solution of \eqref{eq-v-rad}
and $p>p_S$.  
Estimate \eqref{est-v} guarantees that the Lyapunov functional
$E(v(\cdot,s))$ is uniformly bounded for $s\in\R$
and $E(v(\cdot,t_k))\to E(w)$ whenever 
$v(\cdot,t_k)\to w\hbox{ in }C^1_{loc}(0,\infty)$.
Consequently, standard arguments (see~Appendix G in \cite{QS07}, for example)
show that the $\alpha$- and $\omega$-limit sets
$$ \alpha(v):=\{w\in C^1(0,\infty):(\exists t_k\to-\infty)\
 v(\cdot,t_k)\to w\hbox{ in }C^1_{loc}(0,\infty)\}, $$
$$ \omega(v):=\{w\in C^1(0,\infty):(\exists t_k\to\infty)\
 v(\cdot,t_k)\to w\hbox{ in }C^1_{loc}(0,\infty)\}, $$
are nonempty connected sets
consisting of nonnegative steady states
of \eqref{eq-v-rad}. 
In addition, if $v$ corresponds to an entire solution $u$
(hence \eqref{est-v} is true with $C_T=0$)
and $v$ is bounded in $(0,\infty)\times(T_1,T_2)$
for some $-\infty\leq T_1<T_2\leq\infty$,
then the convergence $v(\cdot,t_k)\to w$ in $C^1_{loc}(0,\infty)$
with $t_k\in(T_1,T_2)$  implies the convergence 
$v(\cdot,t_k)\to w$ in $BC^1(0,\infty)$.

We now summarize further useful  properties of $\alpha(v)$
and $\omega(v)$ reflecting the structure of steady states
of the present problem. In particular, we show that $\alpha(v)$
and $\omega(v)$ are singletons.

First note that estimate
\eqref{est-v} with $C_T=0$ implies $\kappa\notin\alpha(v)$.
Our assumption $p>p_S$ guarantees that
$\phi_\infty$ is the only nonnegative 
steady state of \eqref{eq-v-rad} satisfying $\limsup_{\rho\to0}w(\rho)=\infty$,
see  \cite[Theorem~1.2]{M10} or \cite{Q18}.
Notice also that $0\notin\alpha(v)$ 
since $E(0)=0<E(w)$ for any positive steady state of \eqref{eq-v-rad}
(cp. \eqref{Eofw})
and $s\mapsto E(v(\cdot,s))$ is decreasing unless $v$ is  
a steady state.

Any nonnegative steady state $w$ of \eqref{eq-v-rad} 
satisfying $\limsup_{\rho\to0}w(\rho)<\infty$
is uniquely determined by its value at $\rho=0$.
If $w$ is nonconstant, then \cite[Lemmas~2.2--2.3]{M09} and \cite{BE88}
yield the following relations
\begin{equation}
  \label{eq:2}
  w(0)>\kappa, \qquad w'<0\ \hbox{ on }\ (0,\infty), \qquad
z(w-\phi_\infty)\geq2.
\end{equation}
Denote by ${\cal A}$ the set of $a\in[0,\infty)$ 
for which there exists a steady state
$w_a\geq0$ of \eqref{eq-v-rad} satisfying $w_a(0)=a$.
By \cite[Proposition~2.3 and the proof of Lemma~2.4]{Ma99},
for any $a\in{\cal A}\setminus\{0,\kappa\}$ there exists 
$c_a:=\lim_{\rho\to\infty}w_a(\rho)\rho^{2/(p-1)}\in(0,\infty)\setminus\{L\}$,
and the mapping $a\mapsto c_a:{\cal A}\setminus\{0,\kappa\}\to(0,\infty)$ is
injective.
In particular, $z(w_a-\phi_\infty)<\infty$ for any $a\in{\cal A}$.
Set 
$${\cal A}_k:=\{a\in{\cal A}:z(w_a-\phi_\infty)=k\}, \qquad
k=0,1,2,\dots.$$ 
By \eqref{eq:2},  ${\cal A}_0=\{0\}$ and ${\cal A}_1=\{\kappa\}$.
As proved in   \cite{PQ19}, the set ${\cal A}$ is discrete.
This---in conjunction with the uniqueness of the unbounded
positive steady state $\phi_\infty$---shows that
for any positive solution $v$ of \eqref{eq-v-rad},
the sets  $\alpha(v)$ and $\omega(v)$
are singletons consisting of either $\phi_\infty$ or $w_a$ for some
$a\in{\cal A}$. 

As already mentioned in the introduction,
if $p>p_L$, then ${\cal A}=\{0,\kappa\}$, i.e.\ $w_0\equiv0$ and
$w_\kappa\equiv\kappa$ are the only bounded
nonnegative steady states of \eqref{eq-v-rad}.
In this case, each of the sets $\alpha(v)$ and $\omega(v)$
has to be one of the sets $\{\phi_\infty\}$, $\{\kappa\}$, or $\{0\}$.
We also know that
$\alpha(v)\ne\{0\}$ (and $\alpha(v)\ne\{\kappa\}$ if
$v$ corresponds to an entire solution $u$).
Proposition~\ref{propEinftykappa} guarantees
$\omega(v)\ne\{\phi_\infty\}$.

Let now $p_S<p<p_{JL}$. Then each of the sets ${\cal A}_k$ is nonempty 
(see \cite{T87,L88,BQ89,FP09,NS19} and references therein)
and bounded (this follows from the first sentence in the proof of
\cite[Lemma 2.2]{FM07}, for example), hence finite.
On the other hand, an easy contradiction argument shows
$\inf{\cal A}_k\to\infty$ as $k\to\infty$.

The arguments in the proof of \cite[Proposition 2.4]{FP09}
show that if  $w_1,w_2$ are two different positive steady states of \eqref{eq-v-rad}
(possibly unbounded), then
\begin{equation} \label{eq-a} 
 w_1(\rho)=w_2(\rho) \hbox{ for some }\rho>0 
  \ \hbox{ implies }\ w_1(\rho)\geq\kappa.
\end{equation} 
Hence,
$w_1$ and $w_2$ do not intersect for large values of $\rho$.
This is also a consequence of
Proposition \ref{prop-comp} below,
where we examine similar intersection properties
for time-dependent solutions of \eqref{eq-v-rad}. 

\subsection{Comparison arguments and
intersections of solutions of \eqref{eq-v-rad} for large $\rho$}
\label{subsec-intersect}

Let $v_1,v_2$ be two positive solutions of \eqref{eq-v-rad}.
Then $V:=v_1-v_2$ satisfies
\begin{equation} \label{eq-v1v2}
V_s=V_{\rho\rho}+\frac{N-1}rV_\rho-\frac \rho2 V_\rho+ fV,
\end{equation}
where 
$$f=f(\rho,s)=-\frac1{p-1}+\begin{cases}
pv_1^{p-1} &\hbox{ if }V(\rho,s)=0,\\
\frac{v_1^p-v_2^p}{V} &\hbox{ otherwise.}
\end{cases}$$
By the Mean Value Theorem,
\begin{equation} \label{est-f}
v_1,v_2\leq C_v \quad\Rightarrow\quad f\leq C_0:=-\frac1{p-1}+pC_v^{p-1}.
\end{equation}
In particular, 
\begin{equation} \label{est-f2}
f\leq-\delta_0:=-\frac1{2(p-1)} \quad\hbox{provided}\quad 
 C_v\leq c_0:=\Bigl(\frac1{2p(p-1)}\Bigr)^{1/(p-1)}.
\end{equation}

\begin{proposition} \label{prop-comp}
Let $v_1,v_2,V,\allowbreak c_0,\delta_0$ be as above, $s_0\in\R$,
$\rho_0>0$ and $V(\rho_0,s_0)\neq0$.
Set
\begin{equation} \label{D}
\left.\begin{aligned}
D &:=\{(\rho,s)\in(0,\infty)\times(-\infty,s_0]:V(\rho,s)\ne0\}, \\
D_0 &:=\hbox{ the connected component of $D$ containing $(\rho_0,s_0)$}, \\
\Omega(s) &:=\{\rho:(\rho,s)\in D_0\}. 
\end{aligned}
\quad \right\}
\end{equation}
Assume
\begin{equation}
  \label{eq:4}
  v_2\leq c_0 \quad\hbox{in }D_0,
\end{equation}
and 
\begin{equation} \label{v1v2-infty}
\lim_{\rho\to\infty,\ \rho\in\Omega(s)}v_2(\rho,s)=0,
\quad\hbox{locally uniformly in $s$}. 
\end{equation}
Then $V(\rho_0,s_0)>0$.
\end{proposition}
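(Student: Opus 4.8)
The plan is to argue by contradiction, converting the hypotheses on $D_0$ into a differential inequality for a weighted $L^2$-norm of $V$ over the moving cross-section $\Omega(s)$, and then to exploit the backward time direction together with the uniform bound coming from \eqref{eq:4}.

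First I would record the sign structure. Since $D_0$ is a connected subset of $D=\{V\ne0\}$ and $V$ is continuous, $V$ has a constant sign throughout $D_0$. Suppose, for contradiction, that $V(\rho_0,s_0)<0$; then $V<0$ on all of $D_0$, so $0<v_1<v_2\le c_0$ there, the last inequality being \eqref{eq:4}. In particular both $v_1,v_2\le c_0$ on $D_0$, so \eqref{est-f2} (applied with $C_v=c_0$) gives $f\le-\delta_0<0$ on $D_0$. Set $W:=-V>0$ on $D_0$; by \eqref{eq-v1v2} it solves
\[
 W_s=\frac1\mu(\mu W_\rho)_\rho+fW,\qquad \mu(\rho):=\rho^{N-1}e^{-\rho^2/4},
\]
where the weight $\mu$ is chosen to make the radial operator in \eqref{eq-v1v2} self-adjoint in $L^2(\mu\,d\rho)$.

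Next I would consider the weighted energy $I(s):=\int_{\Omega(s)}W(\rho,s)^2\,\mu(\rho)\,d\rho$. Since $0<W<v_2\le c_0$ on $D_0$ and $\int_0^\infty\mu\,d\rho<\infty$, the quantity $I(s)$ is finite and bounded above by $c_0^2\int_0^\infty\mu\,d\rho$, uniformly in $s\le s_0$. Differentiating and integrating by parts, I expect
\[
 I'(s)=-2\int_{\Omega(s)}W_\rho^2\,\mu\,d\rho+2\int_{\Omega(s)}fW^2\,\mu\,d\rho\le 2\int_{\Omega(s)}fW^2\,\mu\,d\rho\le-2\delta_0\,I(s).
\]
Here the contributions of the moving lateral boundary of $\Omega(s)$ vanish because $W=0$ wherever $V=0$; the endpoint term at $\rho=0$ vanishes since $\mu(0)=0$ and $W_\rho(0,s)=0$; and the term at $\rho\to\infty$ vanishes by the Gaussian decay of $\mu$ together with the a priori gradient bound \eqref{est-v} and \eqref{v1v2-infty}. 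Integrating $I'\le-2\delta_0 I$ backward from $s_0$ then yields $I(s)\ge e^{2\delta_0(s_0-s)}I(s_0)$ for all $s\le s_0$. But $W(\rho_0,s_0)>0$ forces $I(s_0)>0$, so $I(s)\to\infty$ as $s\to-\infty$, contradicting the uniform bound on $I$. Hence $V(\rho_0,s_0)>0$.

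I expect the main obstacle to be the rigorous justification of the energy identity on the irregular, time-dependent and unbounded domain $\Omega(s)$: differentiating $I$ through a moving boundary and checking that all boundary terms really vanish. This I would handle using Proposition \ref{prop-zero}: on each bounded $\rho$-interval $V(\cdot,s)$ has only finitely many zeros, and these vary continuously and are, for all but finitely many $s$, simple, so that $\Omega(s)$ is a finite union of intervals whose endpoints move smoothly off an exceptional set, and at each such endpoint $W=0$, which annihilates the transport and integration-by-parts terms. Should the moving-domain bookkeeping prove cumbersome, an essentially equivalent route is to set $Z:=e^{\delta_0 s}W$, observe that $Z_s-\frac1\mu(\mu Z_\rho)_\rho=(f+\delta_0)e^{\delta_0 s}W\le0$ so that $Z$ is a nonnegative subsolution of the pure diffusion operator which vanishes on the lateral boundary of $D_0$ and tends to $0$ both as $\rho\to\infty$ (by \eqref{v1v2-infty}) and as $s\to-\infty$ (by $W\le c_0$), and then to invoke a Phragm\'en--Lindel\"of form of the maximum principle to force $Z\le0$, again contradicting $Z(\rho_0,s_0)>0$.
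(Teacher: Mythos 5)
Your primary (energy) argument has a genuine gap: it silently assumes that the component $D_0$ extends backward for all time, i.e.\ that $\Omega(s)\neq\emptyset$ for every $s\le s_0$. Nothing in the hypotheses guarantees this; $D_0$ may well be a ``bubble'' of finite backward extent, and the paper's own proof explicitly splits according to whether $S:=\inf\{s<s_0:\Omega(s)\ne\emptyset\}$ is $-\infty$ or finite. If $S>-\infty$, then $I(s)=0$ for $s<S$, so your claim ``$I(s)\ge e^{2\delta_0(s_0-s)}I(s_0)$ for all $s\le s_0$'' is false, and your contradiction --- $I(s)\to\infty$ as $s\to-\infty$ versus the uniform bound $I\le c_0^2\int_0^\infty\mu\,d\rho$ --- never materializes: the differential inequality can only be integrated on $(S,s_0]$, where it merely says $I(s)\ge e^{2\delta_0(s_0-s)}I(s_0)$ for $s\in(S,s_0]$. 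To close the argument you would additionally need $I(s)\to0$ as $s\downarrow S$. That is true, but it requires exactly the extra step the paper supplies for its sup-norm quantity $m(s)$: one must show, using \eqref{v1v2-infty}, the continuity of $V$, and a connectedness argument (any finite-time boundary point of $D_0$ with $\rho>0$ must lie in the zero set of $V$, since otherwise a full neighborhood of it would belong to $D_0$, contradicting the definition of $S$), that $\sup_{\Omega(s)}|V(\cdot,s)|\to0$ as $s\downarrow S$. Without this case your proof is incomplete.

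Two further remarks. First, your fallback route --- setting $Z:=e^{\delta_0 s}W$ and applying a Phragm\'en--Lindel\"of maximum principle on $D_0$ --- is essentially the paper's proof: the paper's comparison-principle estimate $m(s_0)\le e^{-\delta_0(s_0-s)}m(s)$, derived from \eqref{eq-v1v2} and \eqref{est-f2}, is precisely the statement that $e^{\delta_0 s}|V|$ obeys the maximum principle on $D_0$; this route avoids all moving-boundary calculus, but as you sketch it, it suffers from the same omission (you list decay only as $\rho\to\infty$ and $s\to-\infty$, whereas when $S$ is finite one must also use that $Z$ vanishes on the bottom portion of the parabolic boundary of $D_0$). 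Second, your justification of the identity for $I'(s)$ via Proposition~\ref{prop-zero} is more delicate than you suggest: part (ii) of that proposition requires $V(R,\cdot)\neq0$ on the time interval in question, the zeros of $V(\cdot,s)$ may accumulate at $\rho=\infty$, and $\Omega(s)$ is the cross-section of one component of $\{V\ne0\}$ rather than the full non-vanishing set; organizing this into an absolutely continuous $I$ with the asserted derivative is real work that the sup-norm/comparison argument renders unnecessary.
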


In applications of this proposition, we verify condition 
\eqref{v1v2-infty} using an a priori bound, such as 
\eqref{est-v} with $C_T=0$. 
By the same a priori bound,
we will have \eqref{eq:4}
verified, provided 
$\rho_1(s):= \inf\Omega(s)$ is large enough for all $s$.

Notice that if $v_1$ 
also satisfies such an a priori bound,
then $v_1$ and $v_2$ can be interchanged. In this case,
Proposition \ref{prop-comp}
says in effect that   $\rho_1(s)$ cannot be 
large for all $s$. This in particular entails
statement \eqref{eq-a} for steady states, as noted at the end of the
previous subsection.  

\begin{proof}[Proof of Proposition \ref{prop-comp}] Let
\begin{equation} \label{D1}
\left.\begin{aligned}
m(s) &:=\sup_{\Omega(s)}|V(\cdot,s)|, \\
S &:=\inf\{s<s_0:\Omega(s)\ne\emptyset\}, 
\end{aligned}
\quad \right\}
\end{equation}
The proof is by contradiction. Assume that $V(\rho_0,s_0)<0$.
Then 
\begin{equation} \label{v1v2-small}
0<v_1<v_2\leq c_0 \quad\hbox{in }D_0,
\end{equation}
hence $m(s)\leq c_0$ for all $s\leq s_0$.
The comparison principle used for
equation \eqref{eq-v1v2} together with estimate \eqref{est-f2}
give
\begin{equation} \label{est-m}
m(s_0)\leq e^{-\delta_0(s_0-s)}m(s)\quad\hbox{for}\quad s\in(S,s_0).
\end{equation}
If $S=-\infty$, then \eqref{est-m} and \eqref{v1v2-small}
yield $m(s_0)=0$.
If $S>-\infty$, then \eqref{v1v2-infty}
and the continuity of $V$
guarantee $m(s)\to0$ as $s\to S+$,
hence $m(s_0)=0$ again.
But $m(s_0)=0$ contradicts our assumption $V(\rho_0,s_0)\ne0$.
\end{proof}

\section{Proof of Theorem~\ref{thmL1}}
\label{sec-proofs}
The proof of Theorem~\ref{thmL1} is long and rather technical at places.
We first give an outline.
Let $u=u(r,t)$ be a positive solution of \eqref{eq-entire} with
$p>p_L$. Fixing any $T\in \R$,
let $v$ be the corresponding rescaled solution
of \eqref{eq-v-rad}.
Using considerations in  Subsection~\ref{subsec-ss}, we first show
easily that $\alpha(v)=\{\phi_\infty\}$. Thus, formally, $v$ can be
viewed as a solution on the unstable manifold of the singular steady
state. (The term ``manifold'' is used loosely here; the manifold
structure of the solutions approaching $\phi_\infty$ backward in time
is not actually established.) 
At the same time, as observed in \cite{PY05},
the solutions of \eqref{eq-v-rad} corresponding to the radial
steady states of the original equation
\eqref{eq-entire} form a one-dimensional manifold that can be
considered as the principal part of the unstable manifold of $\phi_\infty$:
As time approaches $-\infty$,
these rescaled solutions approach
$\phi_\infty$ monotonically and at an exponential 
rate given by the principal eigenvalue of
the linearization of the right-hand side of \eqref{eq-v-rad} at
$\phi_\infty$.  
Our main goal is to derive suitable estimates on
$\phi_\infty-v$ in order to show that the entire solution
$v$ has to lie on the principal part of the unstable manifold,
or, in other words, $u$ is a steady state. This is achieved
by careful analysis of the
abstract form of equation \eqref{eq-v-rad} and, in particular,
of the remainder on the right-hand side after 
the linearization has been subtracted from it. 
This analysis, which is really the  crux of our proof,
is carried out in the next subsection. 
We remark that the proof of Theorem \ref{thmLFPY}(ii), as given in
\cite{PY05}, follows a similar general scenario. However, the bounds
$\phi_\alpha\leq u(\cdot,t)\leq \phi_\infty$ assumed there
make all the necessary estimates considerably simpler, even when
nonradial solutions are allowed; those estimates from \cite{PY05}
are of little help in our present analysis
(we make use of other technical results from \cite{PY05}). 

Another ingredient of the proof of Theorem \ref{thmL1} is the radial
monotonicity of the entire solutions, which we prove in Subsection
\ref{subsec-monot} for any $p>p_S$. 
We then  complete the proof of the theorem in Subsection
\ref{subsec-completion}.

\subsection{Linearization of \eqref{eq-v-rad} at $\phi_\infty$ and estimates of the remainder}
\label{subsec-Ah}

In this subsection,  we first assume
assume $p>p_{JL}$ (some abstract results that we recall are valid in
this range), and then focus on the case $p>p_L$.

Set  $a(\rho):=\rho^{N-1}e^{-\rho^2/4}$. 
We consider the weighted Lebesgue space $X:=L^2(0,\infty;a(\rho)d\rho)$
endowed with the scalar product
$$ \langle f,g\rangle:=\int_0^\infty f(\rho)g(\rho)a(\rho)\,d\rho $$
and the corresponding norm $\|f\|_X=\langle f,f\rangle^{1/2}$.
Let
$$Y:=\{f\in H^1_{loc}(0,\infty): f,f'\in X\}$$
be endowed with the norm $\|f\|_Y:=\|f\|_X+\|f'\|_X$.
It was shown in \cite[Lemma 2.3]{HV94} that the operator
\begin{equation} \label{A}
Af:=f''+\Bigl(\frac{N-1}\rho-\frac \rho2\Bigr)f'
       +\Bigl(\frac{pL^{p-1}}{\rho^2}-\frac1{p-1}\Bigr)f 
\end{equation}
with domain
$$ D(A):=\{f\in Y: Af\in X \hbox{ in  the distributional sense}\}$$
can be extended in a unique way to a self-adjoint operator in $X$
(still denoted by $A$), with the following properties:
\begin{itemize} 
\item[(A1)] $D(A)\subset Y$,
\item[(A2)] $(\exists c_A>1)(\forall \phi\in D(A))\ \langle \phi,A\phi\rangle\leq(c_A-1)\langle \phi,\phi\rangle$,
\item[(A3)] the spectrum $\sigma(A)$ consists of a sequence of simple eigenvalues
$$ \mu_j:=-\Bigl(\frac\beta2+\frac1{p-1}+j\Bigr), \quad j=0,1,2,\dots,$$
where
$$ \beta:= \frac12\bigl(-(N-2)+\sqrt{(N-2)^2-4pL^{p-1}}\bigr)<0,$$
and the corresponding eigenfunctions (normalized in $X$) have the form 
$\vartheta_j(\rho)=\hat c_j \rho^\beta M_j(\rho^2/4)$,
where $\hat c_j>0$,
$$ M_j(z):= M\Bigl(-j,\beta+\frac N2,z\Bigr) $$
and $M$ denotes the standard Kummer function (hence $M_j$ is a
polynomial of degree $j$).
Also, for $j=0,1,\dots$, the function
$\vartheta_j$ has exactly $j$ zeros, all of them positive and simple. 
\end{itemize}

The operator $-\tilde A:=-A+c_A$ 
is a positive self-adjoint operator
and its fractional powers $(-\tilde A)^\alpha$ are well defined for all $\alpha\in\R$
(see \cite[Section III.4.6]{A95}).
We  denote by $\{(X_\alpha,-A_\alpha):\alpha\in[-1,1]\}$
the corresponding fractional interpolation-extrapolation scale
of spaces and operators (see \cite[Section V.1]{A95} for its definition
and properties); the norm in $X_\alpha$ will be denoted by $\|\cdot\|_\alpha$.
In particular, $X_0=X$, $A_1=\tilde A$, $X_1=D(\tilde A)$,
$X_{-1}=X_1'$ (where the duality is taken with respect to the duality pairing $\langle\cdot,\cdot\rangle$).
Recall also that this scale is equivalent to the scale generated by $(X,-\tilde A)$
and the complex interpolation functor $[\cdot,\cdot]_\theta$. 
The space $X_{1/2}$ is isomorphic to $Y$, see \cite[Lemma 2.4]{HV94}.
By general result of \cite[Section V.2]{A95}, 
$A_\alpha$ generates an analytic semigroup 
$e^{sA_\alpha}$ in $X_\alpha$ and the following estimate is true for any $\sigma\geq0$
\begin{equation} \label{est-semigroup}
 \|e^{s(A_\gamma+\sigma)}\phi\|_\alpha\leq cs^{\gamma-\alpha}e^{\sigma s}\|\phi\|_\gamma,\qquad
 -1\leq\gamma\leq\alpha\leq1,\ \ s>0.
\end{equation}

If $v$ is as in Proposition~\ref{prop-ub}, $w:=\phi_\infty-v$, 
$$h:=\phi_\infty^p-v^p-p\phi_\infty^{p-1}w,$$
and $f:=v(\cdot,s)$ for some $s$, then
estimate \eqref{est-v} and formulas \cite[(2.52), (2.59)]{HV94} show
that \eqref{A} and the variation-of-constants formula
$$ w(s)=e^{(s-s_0)A}w(s_0)+\int_{s_0}^s e^{(s-\tau)A}h(\cdot,\tau)\,d\tau $$
are true with $A$ replaced by $A_{-1/2}+c_A$.
Since no confusion seems likely, in what follows we set
$A:=A_{-1/2}+c_A$. In particular, estimate \eqref{est-semigroup} implies
\begin{equation} \label{est-semigroupA}
 \|e^{sA}\phi\|_\alpha\leq cs^{\gamma-\alpha}e^{c_A s}\|\phi\|_\gamma,\qquad
 -1/2\leq\gamma\leq\alpha\leq1,\ \ s>0.
\end{equation}

Henceforth we assume that $p>p_L$. 

Let $v,w,h,A,\mu_j,\vartheta_j,(X_\alpha,\|\cdot\|_\alpha)$ be as
above. Crucial for our proof of Theorem \ref{thmL1} is 
a good understanding of the behavior of $v$ in the following case: 
\begin{equation} \label{ass-v}
0\leq v(\cdot,s)<\phi_\infty \qquad\hbox{and}\qquad
\alpha(v)=\{\phi_\infty\}.
\end{equation}
Here, the $\alpha$-limit set $\alpha(v)$ is as in
Subsection~\ref{subsec-ss}. 
In the following proposition we prove,  
loosely speaking, that along a sequence of times the function 
$v$ approaches $\phi_\infty$ in the direction of the eigenfunction
$\vartheta_0$ and at the rate $\exp(\mu_0 s)$.
\begin{proposition} \label{prop-est}
Under the above assumptions and notation, 
there exist a constant $c>0$ and a sequence $s_k\to-\infty$ such that
\begin{equation} \label{final-est3}
\|w(\cdot,s_k)-ce^{\mu_0s_k}\vartheta_0\|_0=o(e^{\mu_0s_k})\quad\hbox{as}\quad k\to\infty.
\end{equation}
\end{proposition}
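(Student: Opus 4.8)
The plan is to run the variation-of-constants formula $w(s)=e^{(s-s_0)A}w(s_0)+\int_{s_0}^s e^{(s-\tau)A}h(\cdot,\tau)\,d\tau$ through the spectral decomposition of $A$ furnished by (A1)--(A3), treating $h$ as a quadratically small forcing term and showing that the one unstable direction governs the backward asymptotics. I would first record the sign pattern of the spectrum: for $p>p_L$ one has $\mu_0>0>\mu_1>\mu_2>\cdots$, so the unstable eigenspace is the one-dimensional span of $\vartheta_0$, and by (A3) the eigenfunction $\vartheta_0$ is positive. Writing $w(\cdot,s)=\sum_{j\ge0}w_j(s)\vartheta_j$ with $w_j(s)=\langle w(\cdot,s),\vartheta_j\rangle$, the hypothesis $0\le v<\phi_\infty$ gives $w>0$, hence $w_0(s)>0$ for all $s$.

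The decisive step is a quadratic bound on the remainder $h=\phi_\infty^p-v^p-p\phi_\infty^{p-1}w$. Convexity of $t\mapsto t^p$ gives $h\le0$, and Taylor's theorem gives $|h|\lesssim\phi_\infty^{p-2}w^2$ on the set where $w\le\tfrac12\phi_\infty$, while elsewhere only the crude bound $|h|\lesssim\phi_\infty^p$ is available. The goal is an estimate of the form $\|h(\cdot,s)\|_{-1/2}\le\varepsilon(s)\,\|w(\cdot,s)\|_{1/2}$ with $\varepsilon(s)\to0$ as $s\to-\infty$. Away from the origin this is immediate from $\alpha(v)=\{\phi_\infty\}$ (which forces $w\to0$ in $C^1_{loc}(0,\infty)$, so $w\le\tfrac12\phi_\infty$ there once $-s$ is large) together with \eqref{est-v}. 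I expect the genuine difficulty, and the main obstacle of the whole proposition, to lie near $\rho=0$: there $\phi_\infty$ is singular and, since \eqref{est-v} only yields $v\le(C/L)\phi_\infty$, the ratio $w/\phi_\infty$ need not be small, so the contribution of a fixed neighborhood of the origin to $\|h\|_{-1/2}$ is a priori of fixed size rather than $o(\|w\|)$. Overcoming this requires showing that $w$ is in fact small relative to $\phi_\infty$ near the origin in the relevant weighted norm; this is where the finer technical estimates of \cite{PY05} and the weighted (Hardy-type) structure built into $A$ in \cite{HV94} must be invoked.

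Granting the quadratic estimate, I would split $w=w_0\vartheta_0+w^-$ with $w^-=(I-P_0)w$, where $P_0$ is the spectral projection onto $\mathrm{span}(\vartheta_0)$. For the stable part I push the initial time to $-\infty$: since $w(\cdot,s)\to0$ and, on the range of $I-P_0$, the spectrum of $A$ lies in $(-\infty,\mu_1]$, the homogeneous term drops out and $w^-(s)=\int_{-\infty}^s e^{(s-\tau)A}(I-P_0)h(\cdot,\tau)\,d\tau$. Estimating this convolution with the spectral gap and the smoothing estimate \eqref{est-semigroupA}, combined with the quadratic bound on $h$, yields $\|w^-(s)\|_0=o(e^{\mu_0 s})$ once the rate of $\|w\|$ is known. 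For the unstable component, a differential inequality $\dot w_0=(\mu_0+o(1))\,w_0$ (coming from $|\langle h,\vartheta_0\rangle|\le\varepsilon(s)\,\|w\|$) gives a crude exponential rate, which then makes the integral in $\frac{d}{ds}\bigl(e^{-\mu_0 s}w_0\bigr)=e^{-\mu_0 s}\langle h(\cdot,s),\vartheta_0\rangle$ convergent near $-\infty$; hence $e^{-\mu_0 s}w_0(s)\to c$ for some $c\ge0$ and $w_0(s)=c\,e^{\mu_0 s}(1+o(1))$.

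It remains to see that $w_0$ is the dominant component and that $c>0$. This follows from a cone argument: were the stable part ever comparable to or larger than the unstable one, the quadratic smallness of $h$ together with the backward decay $w\to0$ would be incompatible with the growth of stable modes backward in $s$; thus the unstable direction dominates, the cone is invariant, and $c>0$. Extracting a sequence $s_k\to-\infty$ along which the lower bound $w_0(s_k)\ge c'e^{\mu_0 s_k}$ is attained and combining it with $\|w^-(s_k)\|_0=o(e^{\mu_0 s_k})$ yields $\|w(\cdot,s_k)-c\,e^{\mu_0 s_k}\vartheta_0\|_0=o(e^{\mu_0 s_k})$, which is the assertion. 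In fact the argument should deliver the asymptotics for all $s\to-\infty$; the weaker statement along a sequence is what the compactness/normalization form of the dominance step produces most directly and is all that is needed later.
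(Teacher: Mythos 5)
Your proposal rests on a spectral picture that is false in the range $p>p_L$, and this breaks the entire architecture of the argument. You claim that $\mu_0>0>\mu_1>\mu_2>\cdots$, so that the unstable subspace is the one-dimensional span of $\vartheta_0$. In fact, for $p>p_L$ one has $\mu_0>\mu_1>0>\mu_2$ (this is checked directly from the formulas in (A3), and it is the very first observation in the paper's proof): the unstable subspace is \emph{two}-dimensional, spanned by $\vartheta_0$ and $\vartheta_1$. Two steps of your proof then fail concretely. First, the representation $w^-(s)=\int_{-\infty}^s e^{(s-\tau)A}(I-P_0)h(\cdot,\tau)\,d\tau$ is not justified: on the $\vartheta_1$-mode the factor $e^{(s-\tau)\mu_1}$ grows as $\tau\to-\infty$ because $\mu_1>0$, so the homogeneous term does not drop out and the integral need not converge. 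Second, and more fundamentally, since $0<\mu_1<\mu_0$, the mode $e^{\mu_1 s}$ decays \emph{more slowly} than $e^{\mu_0 s}$ as $s\to-\infty$; hence on the unstable manifold of $\phi_\infty$ the generic backward asymptotics are governed by $\vartheta_1$, not $\vartheta_0$. Your cone argument, which only pits the $\vartheta_0$-mode against a supposedly stable remainder, cannot detect this; as written it would ``prove'' \eqref{final-est3} for every orbit approaching $\phi_\infty$ backward in time, which is false.

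The missing idea is the mechanism that excludes dominance of the $\vartheta_1$-component, and this is where the hypothesis $0\le v<\phi_\infty$ enters the paper's proof in an essential way beyond giving $h\le 0$: one decomposes $w=\xi_0\vartheta_0+\xi_1\vartheta_1+\tilde w$ with $\tilde w$ orthogonal to both eigenfunctions, proves $\|\tilde w\|_0=o(|\xi_0|+|\xi_1|)$ (this is where the quadratic estimates on $h$ live, including the weighted H\"older/embedding estimates with the functions $f_\delta$ and the exponents $\zeta$, $z$ that resolve the near-origin difficulty you correctly flagged), and then applies the dichotomy of \cite[Proposition 4.4]{PY05} to $\xi:=\xi_0^2$ and $\eta:=\xi_1^2+\|\tilde w\|_0^2$: either $\eta=o(\xi)$ or $\xi=o(\eta)$. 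The second alternative is ruled out by a sign argument: $\vartheta_1$ changes sign (by (A3) it has exactly one positive zero), so if $\xi_1\vartheta_1$ dominated, then $w=\xi_0\vartheta_0+\xi_1\vartheta_1+\tilde w$ would change sign, contradicting $w>0$. Nothing in your proposal plays this role. A further, minor discrepancy: your expectation that the asymptotics should hold as $s\to-\infty$ (not merely along a sequence) is exactly what the paper \emph{cannot} obtain when $p\in(p_L,p_H]$; there the differential inequalities are only available after time-averaging, $\xi(s):=\int_s^{s+\tau}\xi_0^2\,d\sigma$ etc., which is why the proposition is stated along a sequence $s_k\to-\infty$.
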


\begin{proof} Recall that $p>p_L$ implies that
 $\mu_0>\mu_1>0>\mu_2$ (this can be easily checked using the formulas
 in (A3)).

Let $P$ be the orthogonal projection 
onto the orthogonal complement of $\{\vartheta_0,\vartheta_1\}$ in $X$.
Let $\xi_0,\xi_1$ be defined by
\begin{equation} \label{w-decomp}
 w(\cdot,s)=\xi_0(s)\vartheta_0+\xi_1(s)\vartheta_1+\tilde w(\cdot,s),
\end{equation}
where $\tilde w(\cdot,s):=P(w(\cdot,s))$. 
Since $E(v(\cdot,s))\to E(\phi_\infty)$ as $s\to-\infty$ (see Section
\ref{subsec-ss}), 
we have $w(\cdot,s)\to 0$ in $X$, hence
\begin{equation} \label{wto0}
 \|\tilde w(\cdot,s)\|_0\to0\quad\hbox{and}\quad \xi_i(s)\to0 \ (i=0,1) 
 \quad\hbox{as}\quad s\to-\infty.
\end{equation}

Our first goal is to prove that 
\begin{equation} \label{est-w0}
\|\tilde w(s)\|_0=o(|\xi_0(s)|+|\xi_1(s)|)\ \text{ as \ $s\to-\infty$.}
\end{equation}
We start with some estimates of the function $h$. 

By assumption, $h\leq0<w$, hence $\langle h,w\rangle\leq 0$.
In addition, 
$$0\leq -h=p(\phi_\infty^{p-1}-v_\theta^{p-1})w
 \leq p(\phi_\infty^{p-1}-v^{p-1})w
\leq p(p-1)\phi_\infty^{p-2}w^2 $$
for some $v_\theta\in(v,\phi_\infty)$, hence, 
given any $\delta\in[0,1)$,
\begin{equation} \label{est-h1}
 -h\leq f_\delta w^{1+\delta},
\end{equation}
where $f_\delta=f_\delta(\rho,s)$ is given by
$$ f_\delta:=
 [p(\phi_\infty^{p-1}-v^{p-1})]^{1-\delta}[p(p-1)\phi_\infty^{p-2}]^{\delta}.$$
Note that
\begin{equation} \label{est-h2}
\begin{aligned}
&f_\delta(\rho,s)\to 0 \ \hbox{ as }\ s\to-\infty,\\
&|f_\delta(\rho,s)|\leq C\rho^{-\nu_\delta},\ \hbox{ where }\
\nu_\delta:=2(1-\delta)+2\frac{p-2}{p-1}\delta\leq2.
\end{aligned}
\end{equation}

Choose $\zeta\in(1-\beta/2-N/4,1/2)$, $\zeta\geq0$.
We will specifically take $\zeta=0$ when 
 $1-\beta/2-N/4<0$, which is the case
if 
$$p>p_H:=1+4\frac{N+2\sqrt{N}-4}{N^2-12N+16}.$$
Clearly, there is $z>1$ such that 
$$\frac Nz>\frac N2-2\zeta \quad\hbox{and}\quad \frac N{z'}>2-\beta,$$
where $z':=z/(z-1)$.
Fixing such $z$, if $\delta_0>0$ is  small enough, we have 
\begin{equation} \label{z-cond2}
\frac N{z(1+\delta)}>\frac N2-2\zeta \quad\hbox{and}\quad 
\frac N{z'}>\nu_\delta-\beta>2 \quad\hbox{ for any }\delta\in[0,\delta_0].
\end{equation}
Since $|\vartheta_i(\rho)|\leq C\rho^{\beta}(1+\rho^{2i})$, $i=0,1$
(cp. (A3)), 
estimate
\eqref{est-h1} gives the following relations (omitting the argument
$\rho$ of the indicated functions)
\begin{equation} \label{est-hvartheta} 
 \begin{aligned} 
|h\vartheta_i a| &\leq C f_\delta \rho^\beta(1+\rho^{2i})w^{1+\delta}a \\
 &\leq \begin{cases}
   C\bigl((f_\delta \rho^\beta)^{z'} a\bigr)^{1/z'}\bigl(w^{(1+\delta)z} a\bigr)^{1/z}, & \rho\leq1,\\
   C\bigl((f_\delta\rho^{\beta+2})^{2/(1-\delta)} a\bigr)^{(1-\delta)/2}\bigl(w^2 a\bigr)^{(1+\delta)/2}, & \rho>1.
\end{cases} 
\end{aligned} 
\end{equation}
Next, the embedding inequalities $\|w\|_0\leq C\|w\|_{\zeta}$,
$$\|w|_{(0,1)}\|_{L^{z(1+\delta)}(0,1;a(\rho)d\rho)} \leq
C\|w\|_\zeta$$
(the latter follows from  \eqref{z-cond2}), 
and the H\"older inequality imply
$$
|\langle h,\vartheta_i\rangle| \leq
 C\Bigl[\Bigl(\int_0^1(f_\delta\rho^\beta)^{z'}a\,d\rho\Bigr)^{1/z'}
 +\Bigl(\int_1^\infty(f_\delta\rho^{\beta+2})^{2/(1-\delta)}a\Bigr)^{(1-\delta)/2}\Bigr]
 \|w\|_\zeta^{1+\delta}. $$
Now, using  
\eqref{est-h2}, 
\eqref{z-cond2}, 
and the Lebesgue theorem, we obtain  
\begin{equation} \label{est-hvartheta2}
|\langle h,\vartheta_i\rangle|=o(\|w\|_\zeta^{1+\delta})\hbox{ as } s\to-\infty, \quad (i=0,1).
\end{equation} 
Notice also that $\vartheta_i\in X_1$, hence
\begin{equation} \label{wtildew}
 \|w(\cdot,s)-\tilde w(\cdot,s)\|_\alpha\leq C(|\xi_0(s)|+|\xi_1(s)|),\qquad \alpha\leq1.
\end{equation}

With the above estimate of the function $h$ at hand, we next examine
differential equations for $\xi_1$, $\xi_2$, and $\|\tilde w\|_0$.
Multiplying the equation 
$w_s=Aw+h$ by $\vartheta_ia$, $i=0,1$, and integrating 
over $(0,\infty)$, we obtain
$$ 
 \dot\xi_i =\mu_i\xi_i +\langle h,\vartheta_i\rangle, \quad i=0,1,
$$
hence \eqref{est-hvartheta2} and \eqref{wtildew} imply
\begin{equation} \label{eq-xi12}
\frac{d}{ds}\xi_i^2 = 2\mu_i\xi_i^2+g_i, \quad i=0,1, 
\end{equation}
where 
$$\begin{aligned}
g_i 
 &=o(\|w\|_\zeta^{1+\delta}|\xi_i|) \\
 &=o((\|\tilde w\|_\zeta+|\xi_0|+|\xi_1|)^{1+\delta}|\xi_i|) \\
 &=o((\|\tilde w\|_\zeta^2+\xi_0^2+\xi_1^2)^{1+\delta/2}) \quad\hbox{as }\ s\to-\infty.
\end{aligned}
$$
Similarly, multiplying the equation
$\tilde w_s = A\tilde w + Ph$ by $2\tilde w$ and using
$\langle A\tilde w,\tilde w\rangle\leq -c_0(\|\tilde w\|_{1/2}^2+\|\tilde w\|_0^2)$,
$\langle h,w\rangle\leq0$, and  \eqref{est-hvartheta2} we obtain
\begin{equation} \label{estw2}
\begin{aligned}
 \frac{d}{ds}\|\tilde w\|_0^2 
&= 2\langle A\tilde w,\tilde w\rangle  + 2\langle h,w-\xi_0\vartheta_0-\xi_1\vartheta_1\rangle \\
&\leq -2c_0(\|\tilde w\|_{1/2}^2+\|\tilde w\|_0^2)  + g_2, 
\end{aligned}
\end{equation}
where 
$g_2=o((\|\tilde w\|_\zeta^2+\xi_0^2+\xi_1^2)^{1+\delta/2})$
as $s\to-\infty$. 

We are now ready to complete the proof of \eqref{est-w0}.
Fix any 
$$\eps_0\in(0,\min(1,\mu_1,c_0)/5).$$
Then there exists $s_0$ such that 
\begin{equation} \label{est-gi}
|g_i|\leq \eps_0^3(\|\tilde w\|_{1/2}^2+\xi_0^2+\xi_1^2)
\quad\hbox{for}\quad s\leq s_0,\ \  i=0,1,2.
\end{equation}
Assume for a contradiction that there exists $s_1\leq s_0$ such that
$$\|\tilde w(\cdot,s_1)\|_0\geq 2\eps_0(|\xi_0(s_1)|+|\xi_1(s_1)|).$$
Then \eqref{estw2} and the convergence
$\|\tilde w(\cdot,s)\|_0\to0$ as $s\to-\infty$
 guarantee the existence of $s_2<s_1$ such that
$$ \begin{aligned}
\|\tilde w(\cdot,s)\|_0 &\geq \eps_0(|\xi_0(s)|+|\xi_1(s)|) \ \hbox{ for }\ s\in[s_2,s_1],\\
\|\tilde w(\cdot,s_2)\|_0 &=\eps_0(|\xi_0(s_2)|+|\xi_1(s_2)|).
\end{aligned}$$ 
In addition, setting $\psi:=\xi_0^2+\xi_1^2$,  \eqref{eq-xi12} implies
\begin{equation} \label{eq-xieta}
2\mu_0\psi+g_0+g_1 \geq \frac{d}{ds}\psi \geq 2\mu_1\psi+g_0+g_1.
\end{equation}
Integrating \eqref{estw2} and the second inequality in \eqref{eq-xieta}
over $(s_2,s_1)$ 
we obtain
$$ \begin{aligned}
4\eps_0^2 \psi(s_1) &\leq \|\tilde w(\cdot,s_1)\|_0^2 \\
&\leq \|\tilde w(\cdot,s_2)\|_0^2-2c_0\int_{s_2}^{s_1}\|\tilde w\|_{1/2}^2\,ds + \int_{s_2}^{s_1}g_2\,ds \\
&\leq 2\eps_0^2\psi(s_2)-2c_0\int_{s_2}^{s_1}\|\tilde w\|_{1/2}^2\,ds + \int_{s_2}^{s_1}g_2\,ds \\
&\leq 2\eps_0^2\psi(s_1)-2c_0\int_{s_2}^{s_1}\|\tilde w\|_{1/2}^2\,ds \\
 &\qquad -2\eps_0^2\mu_1\int_{s_2}^{s_1}\psi\,ds+\int_{s_2}^{s_1}(g_2-2\eps_0^2(g_0+g_1))\,ds,
\end{aligned}$$
and \eqref{est-gi} yields a contradiction.
Thus, \eqref{est-w0} is proved.

We now complete the proof of Proposition \ref{prop-est}, first in the
case $p>p_H$, then in the case $p\in (p_L,p_H]$. 

Assume $p>p_H$ (notice that this assumption is automatically satisfied
if $N\geq16$ due to $p>p_L$). As noted above, in this case
$\zeta:=0$ is our (legitimate) choice. Set
$$\xi:=\xi_0^2, \qquad \eta:=\xi_1^2+\|\tilde w\|_0^2.$$
Then \eqref{eq-xi12} and \eqref{estw2} imply
$$\begin{aligned}
 \dot\xi &= 2\mu_0\xi+o((\xi+\eta)^{1+\delta/2}), \\
 \dot\eta &\leq 2\mu_1\eta+o((\xi+\eta)^{1+\delta/2}).
\end{aligned}$$

Such differential inequalities are considered in 
\cite{PY05}. According to  
\cite[Proposition 4.4(i)]{PY05}, as $s\to-\infty$, we have either 
$\eta(s)=o(\xi(s))$ or
\begin{equation} \label{eta-large}
\xi(s)=o(\eta(s)) \quad\hbox{as}\quad s\to-\infty.
\end{equation}
Assume that \eqref{eta-large} is true.
Then \eqref{est-w0} implies
\begin{equation} \label{xi1-large}
 \|\xi_0(s)\vartheta_0+\tilde w(\cdot,s)\|_0 
 = o(\|\xi_1(s)\vartheta_1\|_0) \quad\hbox{as}\quad s\to-\infty.
\end{equation}
Since $\vartheta_1(r)$ 
changes sign, \eqref{xi1-large} guarantees that 
$w(\cdot,s)=\xi_0(s)\vartheta_0+\xi_1(s)\vartheta_1+\tilde w(\cdot,s)$
changes sign for some $s$,  which is a contradiction.
Consequently, \eqref{eta-large} fails and
\cite[Proposition 4.4(i)]{PY05} 
implies
\begin{equation} \label{final-estA}
 \eta(s)=o(\xi(s)) \quad\hbox{and}\quad
   \xi(s)+\eta(s)=O(e^{\mu_0s})\qquad\hbox{as }\ s\to-\infty.
\end{equation}
The previous relations and \cite[Proposition 4.4(ii)]{PY05} 
further imply
\begin{equation} \label{final-estB} 
  \xi(s)=\tilde ce^{2\mu_0s}+o(e^{2\mu_0s})\quad\hbox{as}\quad s\to-\infty,
\end{equation}
where $\tilde c$ is a constant. We have $\tilde c\ne0$ due 
to \cite[(4.13)]{PY05}.
Consequently, there exists a constant $c\ne0$ such that
\begin{equation} \label{final-est}
|\xi_0(s)-ce^{\mu_0s}|+|\xi_1(s)|+\|\tilde w(\cdot,s)\|_0
 =o(e^{\mu_0s})\quad\hbox{as}\quad s\to-\infty.
\end{equation}
Since $\xi_0\vartheta_0+\xi_1\vartheta_1+\tilde w=w>0$, we have $c>0$
and estimate \eqref{final-est} yields
$$\|w(\cdot,s)-ce^{\mu_0s}\vartheta_0\|_0=o(e^{\mu_0s})\quad\hbox{as}\quad
s\to-\infty.$$
This completes the proof of Proposition \ref{prop-est} in the case
$p>p_H$. 

Next assume $11\leq N\leq 15$ and $p\in(p_L,p_H]$ (hence $\zeta\in(0,1/2)$).
Taking $s_2<s_1$ with $s_1\to-\infty$,  
 \eqref{estw2}, \eqref{est-w0} imply
\begin{equation} \label{ests1s2}
 c_0\int_{s_2}^{s_1}\|\tilde w\|_{1/2}^2\,ds 
 \leq \|\tilde w(s_2)\|_0^2 + o\Bigl(\int_{s_2}^{s_1}\psi\,ds\Bigr) 
 =o\Bigl(\psi(s_2) + \int_{s_2}^{s_1}\psi\,ds\Bigr).
\end{equation}
Hence, choosing any small $\ep\in(0,1)$, 
 \eqref{eq-xieta} guarantees that
$$\begin{aligned}
(1-\eps)\psi(s_2) &\leq \psi(s_1)-\mu_1\int_{s_2}^{s_1}\psi\,ds \leq \psi(s_1), \\
(1+\eps)\psi(s_2) &\geq \psi(s_1)-3\mu_0\int_{s_2}^{s_1}\psi 
    \geq \Bigl(1-\frac{3\mu_0}{1-\eps}(s_1-s_2)\Bigr)\psi(s_1),
\end{aligned}$$
provided $s_1$ is negative and sufficiently large. 
Consequently, there exists $\tau\in(0,1)$ (independent of $s_2$) 
such that for any sufficiently large negative $s_2$ we have
\begin{equation} \label{122}
 \frac12\psi(s_2)\leq\psi(s)\leq2\psi(s_2), \quad s\in(s_2,s_2+\tau).
\end{equation}
Now  \eqref{ests1s2} and \eqref{122} imply
\begin{equation} \label{est-intw}
 \int_{s}^{s+\tau}\|\tilde w\|_{1/2}^2=o\Bigl(\int_{s}^{s+\tau}\psi\,ds\Bigr)
\quad\hbox{as}\quad s\to-\infty.
\end{equation}

Set 
\begin{equation}
  \label{eq:6}
  \xi(s):=\int_s^{s+\tau}\xi_0^2(\sigma)\,d\sigma,
\qquad
\eta(s):=\int_s^{s+\tau}(\xi_1^2(\sigma)+\|\tilde w(\cdot,\sigma)\|_0^2)\,d\sigma.
\end{equation}
Notice also that for $\delta>0$ small enough, interpolation,
inequality $\|\tilde w\|_0\leq C\|\tilde w\|_{1/2}$
and \eqref{est-w0} imply 
\begin{equation} \label{interp-w}
\|\tilde w\|_\zeta^{2(1+\delta/2)}
\leq C\|\tilde w\|_0^{\delta}\|\tilde w\|_{1/2}^2
= o(\psi^{\delta/2})\|\tilde w\|_{1/2}^2. 
\end{equation}
Integrating \eqref{eq-xi12} with $i=0$ over the interval $(s,s+\tau)$ 
and using \eqref{122}, \eqref{est-intw}, and \eqref{interp-w}, we obtain
$$ \begin{aligned}
\dot\xi &=2\mu_0\xi+o\Bigl(\int_s^{s+\tau}(\|\tilde w\|_\zeta^2+\psi)^{1+\delta/2}\,d\sigma\Bigr) \\
 &=2\mu_0\xi+o(\psi^{1+\delta/2}) \\
 &=2\mu_0\xi+o((\xi+\eta)^{1+\delta/2}).
\end{aligned}$$
Similarly, integrating \eqref{eq-xi12} with $i=1$ and \eqref{estw2} we obtain
$$\dot\eta \leq 2\mu_1\eta+o((\xi+\eta)^{1+\delta/2}).$$
Again, \cite[Proposition 4.4]{PY05} guarantees 
the existence of $\tilde c\ne0$ such that
\eqref{final-estA} and \eqref{final-estB} hold, this time with $\xi$
and $\eta$ as in \eqref{eq:6}. 
Consequently,  there exist $c\ne0$ and $s_k\to-\infty$ such that 
$$
|\xi_0(s_k)-ce^{\mu_0s_k}|+|\xi_1(s_k)|+\|\tilde w(\cdot,s_k)\|_0
 =o(e^{\mu_0s_k})\quad\hbox{as}\quad k\to\infty.
$$
In addition, similarly as in the case $p>p_H$ 
we obtain $c>0$, hence  
\eqref{final-est3} is true.
\end{proof}

\subsection{Radial monotonicity of entire solutions}
\label{subsec-monot}

We next establish the radial monotonicity of positive radial entire
solutions.

\begin{proposition} \label{prop-monot}
Assume $p>p_S$.
Let $u$ be a positive radial solution of \eqref{eq-entire}.
Then $u$ is radially decreasing.
\end{proposition}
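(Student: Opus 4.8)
The plan is to study the radial derivative $w:=u_r$ and to prove that it has no zeros in $(0,\infty)$ for any $t$; by the decay of $u$ this will force $w<0$, i.e.\ radial monotonicity. Differentiating \eqref{eq-Fuj-rad} in $r$ shows that $w$ solves the linear equation
\begin{equation}\label{eq-ur}
 w_t=w_{rr}+\frac{N-1}{r}w_r+\Bigl(pu^{p-1}-\frac{N-1}{r^2}\Bigr)w
 \qquad\text{in }(0,\infty)\times\R,
\end{equation}
with $w(0,t)=0$. By Proposition~\ref{prop-ub} (with $T=\infty$), the potential $pu^{p-1}-\frac{N-1}{r^2}$ is bounded on every set $(\delta,\infty)\times(t_1,t_2)$, $\delta>0$, and $|w(r,t)|\le Cr^{-(p+1)/(p-1)}\to0$ as $r\to\infty$, uniformly on compact time intervals. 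Viewing $w$ as the radial factor of the first spherical--harmonic mode $\partial_{x_1}u=u_r\,(x_1/r)$ of the linearized equation $\partial_t\Phi=\Delta\Phi+pu^{p-1}\Phi$ on $\R^N$ (which is exactly what produces the singular term $-\frac{N-1}{r^2}w$), the zero--number theory underlying Proposition~\ref{prop-zero} applies to \eqref{eq-ur}: the quantity $z\bigl(w(\cdot,t)\bigr)$ is nonincreasing in $t$, and finite once it is finite at some time. Note that if $z\bigl(w(\cdot,t)\bigr)=0$, then $w(\cdot,t)$ has a fixed sign on $(0,\infty)$, and since $u>0$ with $u(r,t)\to0$ as $r\to\infty$ this sign must be negative; hence it suffices to prove
\begin{equation}\label{z-zero}
 z\bigl(u_r(\cdot,t)\bigr)=0\qquad\text{for all }t\in\R.
\end{equation}

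Because the zero number is nonincreasing and nonnegative, \eqref{z-zero} follows once we show $z\bigl(u_r(\cdot,t)\bigr)=0$ for all sufficiently negative $t$. For this I pass to the rescaled function $v$ of \eqref{vu} with any fixed $T\in\R$, so that $s\to-\infty$ corresponds to $t\to-\infty$. Since \eqref{vu} multiplies $u_r$ by a positive factor, $v_\rho$ and $u_r$ have the same zeros and the same sign, and it is enough to control $z\bigl(v_\rho(\cdot,s)\bigr)$ as $s\to-\infty$. By the analysis of Subsection~\ref{subsec-ss}, $\alpha(v)$ is a singleton $\{w_-\}$ consisting of a nonnegative steady state of \eqref{eq-v-rad}; moreover, for an entire solution ($C_T=0$ in \eqref{est-v}) one has $0\notin\alpha(v)$ and $\kappa\notin\alpha(v)$. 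Thus $w_-$ is either a non-constant bounded steady state $w_a$ (with $a\notin\{0,\kappa\}$) or the singular steady state $\phi_\infty$, and in either case $w_-$ is strictly decreasing on $(0,\infty)$: for $w_a$ this is the relation $w_a'<0$ in \eqref{eq:2}, while $\phi_\infty(\rho)=L\rho^{-2/(p-1)}$ is decreasing explicitly. As $\alpha(v)$ is a singleton, the full limit $v(\cdot,s)\to w_-$ holds in $C^1_{loc}(0,\infty)$, so $v_\rho(\cdot,s)\to w_-'<0$ uniformly on compact subsets of $(0,\infty)$.

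It remains to upgrade this local information to $z\bigl(v_\rho(\cdot,s)\bigr)=0$ for all sufficiently negative $s$; this is the main obstacle, and it amounts to ruling out zeros of $v_\rho$ near the two endpoints $\rho=0$ and $\rho=\infty$, where $C^1_{loc}$ convergence gives no control. Near $\rho=0$ one uses $v_\rho(0,s)=0$ together with parabolic regularity up to the (interior) origin: when $w_-=w_a$ the limit has a strict maximum at $0$ with $w_a''(0)<0$, forcing $v_\rho<0$ on a fixed interval $(0,\rho_0)$ for $s$ very negative, while when $w_-=\phi_\infty$ the profile $v$ is large and steeply decreasing near $0$, again giving $v_\rho<0$. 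Near $\rho=\infty$ one uses the a priori decay \eqref{est-v} with $C_T=0$ and the intersection--comparison arguments of Subsection~\ref{subsec-intersect} (Proposition~\ref{prop-comp}), which prevent sign changes of $v_\rho$ from persisting or accumulating for large $\rho$, uniformly as $s\to-\infty$. Combining the three regions yields $z\bigl(v_\rho(\cdot,s)\bigr)=0$, hence $z\bigl(u_r(\cdot,t)\bigr)=0$, for all $t\le T_0$ with some $T_0$; by the monotonicity of the zero number this gives \eqref{z-zero}, and therefore $u_r<0$ on $(0,\infty)\times\R$, i.e.\ $u$ is radially decreasing.
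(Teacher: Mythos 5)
Your proposal follows the same architecture as the paper's proof: pass to the rescaled function $v$, use that $\alpha(v)$ is a singleton $\{w_-\}$ with $w_-$ strictly decreasing, deduce $v_\rho<0$ on compact subsets of $(0,\infty)$ for very negative $s$, exclude sign changes of $v_\rho$ for large $\rho$ via the comparison argument behind Proposition~\ref{prop-comp} together with \eqref{est-v} (this step matches the paper exactly), and then propagate forward in time (the paper uses the maximum principle rather than zero-number monotonicity, a cosmetic difference). Even in the benign case $w_-=w_a$ your treatment of the origin needs a bit more than ``parabolic regularity'': to get $C^1$ (indeed $C^2$) control up to $\rho=0$ uniformly as $s\to-\infty$ one first needs $v$ to be bounded near the origin backward in time; the paper secures this via the $BC^1[0,\infty)$ convergence statement of Subsection~\ref{subsec-ss}, after which the sign of $v_\rho$ on $(0,\eps]$ follows from a comparison argument using that the zero-order coefficient $-\frac{N-1}{\rho^2}-\frac12-\frac1{p-1}+pv^{p-1}$ is very negative there.

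The genuine gap is the case $w_-=\phi_\infty$, which you dispose of with the sentence ``the profile $v$ is large and steeply decreasing near $0$, again giving $v_\rho<0$.'' This does not follow from anything available, and as stated it is false: the convergence $v(\cdot,s)\to\phi_\infty$ holds only in $C^1_{loc}(0,\infty)$ and gives no information whatsoever on $(0,\eps)$, while for each fixed $s$ the solution is classical, so $v(0,s)<\infty$ and $v_\rho(0,s)=0$. Thus $v$ is emphatically \emph{not} large and steeply decreasing up to $\rho=0$; it must flatten out near the origin, and whether in doing so it dips---creating a local minimum, i.e.\ a zero of $v_\rho$, at points $\rho(s)\in(0,\eps)$ for arbitrarily negative $s$---is precisely what has to be ruled out, so your assertion is circular. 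This case cannot be avoided: for $p>p_L$ (the range where the proposition feeds into Theorem~\ref{thmL1}) it is the \emph{only} possibility, since $\mathcal{A}=\{0,\kappa\}$ and both $\{0\}$ and $\{\kappa\}$ are excluded as $\alpha$-limit sets of entire solutions; moreover it genuinely occurs, e.g.\ for the rescalings $\psi_\alpha$ of the steady states $\phi_\alpha$. Handling it is the bulk of the paper's proof: assuming $v(\cdot,\tilde s_0)$ is not decreasing, one shows (i) for all very negative $s$ there is a local minimum at some $\rho(s)\in[0,\eps)$, and the equation at such a point (where $v_\rho=0\le v_{\rho\rho}$) forces the minimum value to stay below $\kappa+\delta$ for all $s\le s_1$ (see \eqref{kappadelta}), since otherwise it would have to grow forward in time at a definite rate, contradicting boundedness; and (ii) since $v(\eps,s)\to\phi_\infty(\eps)>\kappa+1$, comparison with the auxiliary \emph{linear} problem on $(0,\eps)$ with boundary value $\kappa+1$ (whose solutions rise above $\kappa+1/2$ on all of $[0,\eps]$ after a fixed time, by \eqref{eps-choice}) forces $v(\cdot,s_2)\ge\kappa+1/2$ on $[0,\eps]$, contradicting (i). Without an argument of this kind, or some substitute for it, your proof is incomplete exactly where the proposition is hardest.
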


\begin{proof}
Fix $T\in\R$ and let $v=v(\rho,s)$ be the rescaled function
corresponding to $u$ and $T$.
It is sufficient to prove that $v$ is radially decreasing.
Due to Subsection~\ref{subsec-ss},
the $\alpha$-limit set $\alpha(v)$ 
is a singleton $\{w\}$, where either $w=w_a$
with $a\in{\cal A}_k$, $k\geq2$, or $w=\phi_\infty$. 
The derivative $v_\rho$ solves a linear parabolic equation 
whose zero order coefficient is 
$$a_0(\rho,s):=-\frac{N-1}{\rho^2}-\frac12-\frac1{p-1}+pv^{p-1}(\rho,s).$$ 
Estimate \eqref{est-v} with $C_T=0$ guarantees the existence of $R_0>0$
such that $a_0(\rho,s)<-1/2$ when $\rho\geq R_0$.

Since $w_\rho<0$ for $\rho>0$  (cp.~\eqref{eq:2}), 
given $\eps>0$ there exists $s(\eps)$ such 
$v_\rho(\rho,s)<0$ for $\rho\in[\eps,R_0]$ and $s\leq s(\eps)$.
Assume $v_\rho(\rho_0,s_0)>0$ for some $\rho_0>R_0$ and $s_0\leq s(\eps)$.
Set $V:=v_\rho$ and let $D,D_0,\Omega(s),m(s),S$
be defined by \eqref{D},  \eqref{D1} and $\rho_1(s):= \inf\Omega(s)$.
Then $\rho_1(s)>R_0$ for $s\in(S,s_0]$ and
the same arguments as in the proof of Proposition~\ref{prop-comp}
yield $m(s_0)=0$, which is a contradiction.
Consequently, the maximum principle shows that $v(\cdot,s)$ 
is decreasing on $[\eps,\infty)$
for any $s\leq s(\eps)$.

If $w=w_a$ for some $a\in{\cal A}_k$ with $k\geq2$, then
$v(\cdot,s)\to w_a$ in $BC^1[0,\infty)$ as $s\to-\infty$,
hence there exist $\eps>0$ and $\tilde s_0\leq s(\eps)$ such that
$a_0(\rho,s)<-1$ if $\rho\in[0,\eps]$ and $s\leq \tilde s_0$.
If we assume $v_\rho(\rho_0,s_0)>0$ for some $\rho_0\in(0,\eps)$ and
$s_0\leq \tilde s_0$, then
the same arguments as above yield a contradiction.
Consequently,
$v(\cdot,s)$ is also nonincreasing on $[0,\eps]$ for $s\leq\tilde s_0$,
and the maximum principle guarantees that $v(\cdot,s)$ 
is decreasing on $[0,\infty)$ for all $s\in\R$.

Finally consider the case $w=\phi_\infty$
and assume on the contrary
that $v(\cdot,\tilde s_0)$ is not decreasing for some $\tilde s_0$.
Fix $\eps\in(0,1)$ such that
\begin{equation} \label{eps-choice}
 \frac{\kappa+1}{p-1}\eps<\frac{C_\eps}3, 
\quad\hbox{where}\quad C_\eps:=\frac{N-1}\eps-\frac\eps2,
\end{equation}
and 
\begin{equation} \label{phiinftykappa1}
 \phi_\infty(\eps)>\kappa+1.
\end{equation}
Then we can find $s_0\leq \tilde s_0$ such that for any $s\leq s_0$,
$v(\cdot,s)$ is decreasing on $[\eps,\infty)$ 
and $v(\cdot,s)$ attains a local minimum 
at some $\rho(s)\in[0,\eps)$.
Fix $\delta\in(0,1/2)$. For any $s\le s_0$, if
 $v(\rho(s),s)\le \kappa+\delta$, then the relations
 $v_\rho(\rho(s),s)=0\leq v_{\rho\rho}(\rho(s),s)$
and the equation for $v$
 imply $v_s(\rho(s),s)\geq \tilde\delta$ for some
$\tilde\delta>0$ (depending only on $p$ and $\delta$).
It follows that  there exists $s_1<s_0$ such that
\begin{equation} \label{kappadelta}
  \min_{[0,\eps]}v(\cdot,s)
  < \kappa+\delta\quad\hbox{for}\quad s\leq s_1.
\end{equation}
On the other hand, 
by \eqref{phiinftykappa1}
there exist $s_2\leq s_1$ such that
$v(\eps,s)>\kappa+1$ if $s\leq s_2$.

Let $z$ be the solution of the linear equation
$$z_s=z_{\rho\rho}+\frac{N-1}\rho z_\rho-\frac \rho2 z_\rho-\frac z{p-1} $$
in $(0,\eps)\times(0,\infty)$ 
satisfying the boundary conditions
$z_\rho(0,s)=0$, $z(\eps,s)=\kappa+1$,
and the initial condition $z(\rho,0)=0$.
Then $z$ is increasing in time and,
since $\kappa+1$ is a supersolution to $z$,
we obtain $z_\rho(\eps,s)\geq0$,
hence $z_\rho\geq0$ by the maximum principle.
Also, $z$ approaches a steady state $Z$
as $s\to\infty$ with $Z_\rho\geq0$, $Z_\rho(0)=0$, and  $Z(\eps)=\kappa+1$.
We have
$$\frac{\kappa+1}{p-1}\geq \frac{Z(\rho)}{p-1}
 =Z_{\rho\rho}+(\frac{N-1}{\rho}-\frac{\rho}{2})Z_\rho
 \geq Z_{\rho\rho}+C_\eps Z_\rho.$$
Integrating over $\rho\in(0,\eps)$
we obtain
$$\frac{\kappa+1}{p-1}\eps\geq Z_\rho(\eps)-Z_\rho(0)+C_\eps(Z(\eps)-Z(0))
   \geq C_\eps(\kappa+1-Z(0)),$$
hence \eqref{eps-choice} implies $Z(0)>\kappa+2/3$. 
Since $z_\rho\geq0$ and $z(0,s)\to Z(0)$ as $s\to\infty$,
we have $z(\cdot,S)>\kappa+1/2$ for some $S$ large enough.
Since
$v(\cdot,s_2-S+s)\geq z(\cdot,s)$ on $[0,\eps]$ for $s\in[0,S]$ by the comparison principle,
we obtain $v(\cdot,s_2)\geq \kappa+1/2$ on $[0,\eps]$, which contradicts
\eqref{kappadelta}.
 \end{proof}

\subsection{Completion of the proof of Theorem~\ref{thmL1}}
\label{subsec-completion}
In this subsection we assume $p>p_L$.
By $\zeta(\cdot,\al)$ we will denote the solution of 
\begin{equation}
  \label{eq:1}
  \zeta_{\rho\rho}+\frac{N-1}r\zeta_\rho-\frac \rho2 \zeta_\rho-\frac 1{p-1}\zeta+\zeta^p=0
\end{equation}
with $\zeta(0,\al)=\al$. By \cite{M09},
 for each $\al>0$ there is $\rho_\al$ such that $\zeta(\cdot,\al)>0$ on
 $[0,\rho_\al)$ and $\zeta(\rho_\al,\al)=0$. Also, the following property 
is proved in \cite[Lemma 2.5]{M09} (although it is not stressed in  
\cite[Lemma 2.5]{M09}, it can be checked that the
constant $C$ there is independent of $\ep$):  
\begin{itemize} 
\item[(p1)]  For each compact interval $I\subset (0,\infty)$ one has 
  $\zeta(\cdot,\al)-\phi_\infty\to 0$ in $C^1(I)$ as $\al\to\infty$. 
\end{itemize}
We shall also need the following property of  $\zeta(\cdot,\al)$. 
\begin{lemma}  \label{le-on-z}
There is $\al_0>0$ such that 
for each $\al\ge \al_0$ one has
\begin{equation}
  \label{p1a}
  z_{[0,\rho_\al]}(\zeta(\cdot,\al)-\phi_\infty)\le 2.
\end{equation}
\end{lemma}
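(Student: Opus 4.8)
The plan is to track the sign changes of $W_\al:=\zeta(\cdot,\al)-\phi_\infty$ on $[0,\rho_\al]$ by splitting this interval into three regions and controlling each separately, the decisive region being a fixed compact one on which the problem linearizes at $\phi_\infty$. First I record two structural facts. Since $\zeta(\cdot,\al)$ and $\phi_\infty$ both solve the ODE \eqref{eq:1} on $(0,\rho_\al)$, their difference $W_\al$ solves a linear second-order equation, so at any common zero of $W_\al$ and $W_\al'$ the uniqueness theorem forces $\zeta(\cdot,\al)\equiv\phi_\infty$, which is impossible; hence all zeros of $W_\al$ are simple, and the count $z_{[0,\rho_\al]}(W_\al)$ is finite and stable under $C^1$ perturbations. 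Moreover $W_\al<0$ at both ends: near $\rho=0$ because $\zeta(\cdot,\al)$ is bounded while $\phi_\infty\to\infty$, and at $\rho=\rho_\al$ because $\zeta(\rho_\al,\al)=0<\phi_\infty(\rho_\al)$.

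Next I would confine the zeros to a fixed compact interval independent of $\al$. Applying the intersection property \eqref{eq-a} to the two steady states $\zeta(\cdot,\al)$ and $\phi_\infty$ (both positive on $(0,\rho_\al)$), every zero $\rho$ of $W_\al$ satisfies $\phi_\infty(\rho)=\zeta(\rho,\al)\ge\kappa$, that is $\rho\le\rho_\kappa$, where $\rho_\kappa$ is defined by $\phi_\infty(\rho_\kappa)=\kappa$. By property (p1), $\zeta(\cdot,\al)\to\phi_\infty$ uniformly on $[\rho_\kappa/2,\rho_\kappa]$, so $\zeta(\rho_\kappa,\al)>0$ and hence $\rho_\al>\rho_\kappa$ for all large $\al$; thus no zeros lie in $(\rho_\kappa,\rho_\al]$, and it suffices to bound $z_{(0,\rho_\kappa]}(W_\al)$. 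For the inner part I would use that $p>p_L>p_{JL}$, so the regular steady states $\phi_\al$ of the scale-invariant equation $\Delta\phi+\phi^p=0$ are ordered and satisfy $\phi_\al<\phi_\infty$; since near the origin $\zeta(\cdot,\al)$ is governed, after the natural rescaling, by $\phi_\al$, one obtains a fixed $\ep\in(0,\rho_\kappa)$ and an $\al_1$ such that $W_\al<0$ on $(0,\ep]$ for $\al\ge\al_1$. Hence all sign changes of $W_\al$ occur in the fixed compact interval $[\ep,\rho_\kappa]\subset(0,\infty)$.

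On $[\ep,\rho_\kappa]$ I would pass to the linearization. Writing $\zeta^p-\phi_\infty^p=c_\al(\rho)W_\al$, the function $W_\al$ solves a linear equation whose coefficients converge, by (p1), to those of the operator $A$ from \eqref{A}. Normalizing $W_\al$ in $C^1([\ep,\rho_\kappa])$ and extracting a limit, one gets a nonzero solution $w_*$ of $Aw_*=0$; since the zeros of $W_\al$ are simple, their number converges to the number of zeros of $w_*$ on $[\ep,\rho_\kappa]$. The inner sign information identifies $w_*$ as the solution that is negative near the origin (the one matching the Lane--Emden profile, decaying like $\rho^\beta$), and the claim reduces to showing that this solution has at most two zeros in $(0,\rho_\kappa]$. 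This is exactly where the hypothesis $p>p_L$ enters: it is equivalent to $\mu_0>\mu_1>0>\mu_2$, i.e. $A$ has precisely two positive eigenvalues, and the zero count of the threshold solution $w_*$ of $Aw=0$ is governed by this unstable dimension. Using the explicit description of the spectrum in (A3) (Kummer functions, with $\vartheta_j$ having exactly $j$ zeros) together with a Sturm-type comparison against $\vartheta_1$ and $\vartheta_2$, I would conclude $z_{(0,\rho_\kappa]}(w_*)\le2$, and hence $z_{[0,\rho_\al]}(\zeta(\cdot,\al)-\phi_\infty)\le2$ for all large $\al$.

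The main obstacle is this last step: making rigorous the identity between the number of zeros of the limiting solution $w_*$ of $Aw=0$ and the count of positive eigenvalues of $A$, and doing so uniformly up to the singular endpoint $\rho=0$, where the linearization degenerates and where a priori extra oscillations could accumulate. A secondary technical point is the uniformity of the inner estimate $W_\al<0$ on $(0,\ep]$ across the full range of large $\al$, since the scale $\rho^*(\al)$ on which $\zeta(\cdot,\al)$ departs from $\phi_\infty$ tends to $0$; this I expect to handle by the rescaling reducing \eqref{eq:1} near the origin to the Lane--Emden equation, together with the ordering $\phi_\al<\phi_\infty$ valid for $p\ge p_{JL}$.
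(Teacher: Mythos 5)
Your core mechanism is the right one, and it is in fact the same one the paper uses: compare against the third eigenfunction $\vartheta_2$ of the linearization at $\phi_\infty$, which has eigenvalue $\mu_2<0$ and exactly two zeros, so that Sturm comparison caps the number of sign changes of $W_\al:=\zeta(\cdot,\al)-\phi_\infty$. But your route to it---normalize $W_\al$ on a fixed compact interval, extract a kernel solution $w_*$ of $Aw=0$, and count its zeros---stands or falls with the claim that \emph{all} zeros of $W_\al$ lie in a fixed compact interval $[\eps,\phi_\infty^{-1}(\kappa)]$ independent of (large) $\al$. The outer confinement is plausible (though \eqref{eq-a} is stated for positive steady states on $(0,\infty)$, while $\zeta(\cdot,\al)$ vanishes at $\rho_\al$, so its applicability needs checking). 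The genuine gap is the inner confinement, which you dismiss as a secondary technical point: the rescaling $\zeta(\rho,\al)\approx \al\,\phi_1(\al^{(p-1)/2}\rho)$ together with the ordering $\phi_1<\phi_\infty$ only yields $W_\al<0$ on \emph{shrinking} intervals $(0,C\al^{-(p-1)/2}]$; upgrading this to a fixed interval $(0,\eps]$ uniformly in $\al$ is precisely the inner--outer matching problem, since the error of the inner approximation must be shown to stay below $\phi_\infty-\phi_1\sim b_1\rho^\beta$ throughout the intermediate region. Without this, along your contradiction sequence zeros of $W_{\al_k}$ could escape to $\rho=0$, the limit $w_*$ would not inherit four zeros, and no contradiction results---so your own worry about oscillations accumulating at the singular endpoint is exactly the unresolved point.

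By contrast, what you flag as the ``main obstacle'' is not one: you never need to identify $w_*$ with the $\rho^\beta$-decaying solution or invoke any relation between kernel zero counts and the unstable dimension. If $w_*$ is any nontrivial solution of $Aw_*=0$ with at least four (necessarily simple) zeros, then between consecutive zeros Sturm comparison (the potential of $\vartheta_2$'s equation exceeds that of $w_*$'s by $-\mu_2>0$) forces a zero of $\vartheta_2$, giving $\vartheta_2$ at least three zeros---a contradiction. Recognizing this also shows how the paper bypasses your compactness step entirely: it runs the Sturm comparison directly on $W_\al$, with no limit. The cited interlacing result \cite[Lemma 2.9]{M09}, proved by a Wronskian argument at the singular origin using the asymptotics $W_\al\sim -L\rho^{-2/(p-1)}$ and $\vartheta_2\sim c\rho^\beta$, places the two zeros $\eta_1<\eta_2$ of $\vartheta_2$ so that $\eta_1\in(0,\xi_1^\al)$ and $\eta_2\in(\xi_2^\al,\xi_3^\al)$; in particular it supplies, for free, the lower bound on the first zero $\xi_1^\al$ that your inner analysis was supposed to deliver. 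Then (p1) gives $p\zeta^{p-1}<p\phi_\infty^{p-1}-\mu_2$ on $(\eta_1,\eta_2)$ for large $\al$, and Sturm comparison on $(\xi_1^\al,\xi_2^\al)\subset(\eta_1,\eta_2)$ produces a third zero of $\vartheta_2$, the desired contradiction. In short: your key comparison object is correct, but your scheme hinges on a uniform inner estimate you have not proved, and the paper's direct argument via the interlacing lemma is precisely how that difficulty is resolved.
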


\begin{proof}
  Assume that, to the contrary, there are arbitrarily large values
  $\al$ with  $z_{[0,\rho_\al]}(\zeta(\cdot,\al)-\phi_\infty)
  \ge 3$. 
 Clearly, the zeros of $\zeta(\cdot,\al)-\phi_\infty$ are all simple
 and, since $\zeta(\rho_\al,\al)=0$, their number is even. Thus, there
must be at least 4 of them. We denote by
$\xi_1^\al<\dots <\xi_4^\al$ the first four zeros of
$\zeta(\cdot,\al)-\phi_\infty$. 

Let now $\mu_2$ be the third
eigenvalue of the linearization at  $\phi_\infty$
and $\vartheta_2$ a corresponding eigenfunction
(cp. (A3) in the  Subsection \ref{subsec-Ah}).
Then $\mu_2<0$,  $z(\vartheta_2)=2$, and
both zeros of $\vartheta_2$ are positive.
Let $\eta_1<\eta_2$ denote these zeros. 
As noted in \cite[Lemma 2.9]{M09}, a Sturm comparison argument
implies that $\eta_1\in (0,\xi_1^\al)$ and $\eta_2\in
(\xi_2^\al,\xi_3^\al)$.
Using (p1) and taking $\al$ sufficiently large we obtain that
\begin{equation*}
  p\zeta^{p-1}(\rho,\al)<p\phi_\infty^{p-1}(\rho)-\mu_2 
 \quad (\rho\in (\eta_1,\eta_2)). 
\end{equation*}
This relation and the fact that $\eta_1<\xi_1^\al<\xi_2^\al<\eta_2$ 
make the Sturm comparison argument applicable to the interval
$(\xi_1^\al,\xi_2^\al)$ as well.  
We conclude that this interval contains a
third zero of  $\vartheta_2$, which is a  contradiction.  
\end{proof}

Although it is not needed below, we remark that \eqref{p1a} 
in fact holds for all $\alpha>\kappa$. 
This follows from the observation that the zero number in \eqref{p1a} 
does not change as one varies $\alpha\in (\kappa,\infty)$  
(the fact that for $p>p_L$ one has $\rho_\alpha<\infty$ for all $\alpha>\kappa$ is important here).  
One can also turn the argument around and prove \eqref{p1a} 
by using the independence of the zero number of $\alpha$ 
in conjunction with the fact that the zero number is equal to 2 
for $\alpha>\kappa$ sufficiently close to $\kappa$ (see \cite[Lemma 2.3]{M09}).

\begin{proof}[Proof of Theorem~\ref{thmL1}]
  Let $u=u(r,t)$ be a positive bounded (radial)
  solution of \eqref{eq-entire} and $p>p_L$.
Fix $T\in\R$ and let $v$ be the corresponding 
rescaled solution of \eqref{eq-v-rad}.
We know from Subsection~\ref{subsec-ss} that
each of the sets $\alpha(v)$ and $\omega(v)$
has to be one of the sets $\{0\}$, $\{\kappa\}$ and $\{\phi_\infty\}$,
and $\alpha(v)\ne\{0\}$.
Estimate \eqref{est-v} (with $C_T=0$) guarantees $\alpha(v)\ne\{\kappa\}$.
Consequently, $\alpha(v)=\{\phi_\infty\}$.

We prove that
\begin{equation}
  \label{eq:3}
  z(v(\cdot,s)-\phi_\infty)\le 1 \quad(s\in \R).
\end{equation}
In fact, assume there is $s_0$ such that
  $z(v(\cdot,s)-\phi_\infty)\ge 2$ for $s=s_0$ (hence for all $s\le
  s_0$). Making $s_0$ smaller if needed we may assume that the first
  two zeros $\xi_1$, $\xi_2$ of  $v(\cdot,s_0)-\phi_\infty)$ are
  simple. Clearly, $\xi_2$ being the second zero, there is
  $\bar\xi>\xi_2$ such that $v(\bar\xi,s_0)<\phi_\infty(\bar\xi)$. Using (p1)
  and Lemma \ref{le-on-z}, 
  we find $\al$ such that \eqref{p1a} holds along with the following
  statements   
  \begin{itemize}
  \item[(a1)] $v(\cdot,s_0)-\zeta(\cdot,\al)$ has  zeros $\tilde
    \xi_1$,  $\tilde \xi_2$ (near $\xi_1$, $\xi_2$, respectively) 
    with $\tilde \xi_1<\tilde \xi_2<\bar \xi$,
   \item[(a2)] $v(\bar\xi,s_0)<\zeta(\bar\xi,\al)$.
  \end{itemize}
Relations $v>0$, $\zeta(\rho_\al,\al)=0$,  and (a2)  imply that 
$v(\cdot,s_0)-\zeta(\cdot,\al)$ has another zero in $(\bar\xi,\rho_\al)$. 
Thus, $z_{[0,\rho_\al]}(v(\cdot,s)-\zeta(\cdot,\al))\ge 3$ for all $s\le
  s_0$. 

  Let $\al$ be as above. 
Proposition~\ref{prop-monot} and the convergence 
$v(\cdot,s)\to \phi_\infty$
in $C^1_{loc}(0,\infty)$ imply that
there exist $s_1\in\R$ and $\de>0$ such that
$v(\rho,s)>\zeta(\rho,\al)$ for all $\rho\in[0,\delta]$ and
$s\in(-\infty,s_1)$.
Using this relation (and the convergence again), 
we obtain that for  
all sufficiently large
negative $s$ we have
\begin{equation*}
 z_{[0,\rho_\al]}(v(\cdot,s)-\zeta(\cdot,\al))=
z_{[\de,\rho_\al]}(v(\cdot,s)-\zeta(\cdot,\al))\le
z_{[\de,\rho_\al]}(\phi_\infty-\zeta(\cdot,\al))\le 2
\end{equation*}
(cp. \eqref{p1a}). This contradiction completes the proof of \eqref{eq:3}.

We now show that the case  $z(v(\cdot,s)-\phi_\infty)= 1$ for some
$s$ is impossible. 
Indeed, if this holds, then 
$z(u(\cdot,t_0)-\phi_\infty)= 1$ for some $t_0$. Setting 
$u_0(r):=\min \{u(r,t_0),\phi_\infty(r)\}$, we have 
$0<u_0\le\phi_\infty$ and $\phi_\infty-u_0$ has compact support. 
By \cite{PY03}, the solution of 
$\bar u_t=\Delta \bar u+\bar u^p$, $\bar u(\cdot,t_0)=u_0$  is unbounded 
(it approaches $\phi_\infty$), and the comparison principle then implies
that the same it true of $u(\cdot,t)$, in contradiction to our
assumption. 

Thus $z(v(\cdot,s)-\phi_\infty)= 0$, that is, 
$v(\cdot,s)<\phi_\infty$, for all $s\in \R$.

To complete the proof, we now apply some  results of \cite{PY05}.
Recall from Subsection~\ref{subsec-Ah} that 
$\vartheta_0(\rho)=\hat c_0\rho^\beta$,
where $\hat c_0>0$, and $\|\cdot\|_0$ denotes
the norm in $L^2(0,\infty;\rho^{N-1}e^{-\rho^2/4}d\rho)$
The steady states $\phi_\alpha$ satisfy
$$ \phi_\alpha(r)=Lr^{-2/(p-1)}-b(\alpha)r^\beta+O(r^{\beta-\eps})
\quad\hbox{as}\quad r\to\infty,$$
where $\eps>0$, $b(\alpha)=b_1\alpha^{1+\beta(p-1)/2}$, and $b_1>0$ is
a constant; see \cite{GNW, W93}.
According to \cite[Lemma 2.2]{PY05}, the rescaled functions
$$ \psi_\alpha(\rho,s)=e^{-s/(p-1)}\phi_\alpha(e^{-s/2}\rho) $$
(cf.~\eqref{vu}) satisfy
\begin{equation} \label{estPY}
 \|\phi_\infty-\psi_\alpha(\cdot,s)
 -\frac{b(\alpha)}{\hat c_0}e^{\mu_0s}\vartheta_0\|_0
 =o(e^{\mu_0s}) \quad\hbox{as}\quad s\to-\infty.
\end{equation}
Fix $\alpha$ such that $b(\alpha)=c\hat c_0$,
where $c$ is from \eqref{final-est3}.
Then \eqref{estPY} and Proposition~\ref{prop-est} 
imply
$$ \|v(\cdot,s_k)-\psi_\alpha(\cdot,s_k)\|_0=o(e^{\mu_0s_k})
 \quad\hbox{as}\quad k\to\infty.$$
As shown in \cite[Lemma 4.2]{PY05}, this estimate
guarantees that $v\equiv\psi_\alpha$, hence $u\equiv\phi_\alpha$.
\end{proof}

\section{Proofs of Theorems~\ref{thm1} and~\ref{thm2}}
\label{sec-proofs2}

In the proofs of Theorems \ref{thm1}, \ref{thm2}, we will use
the following result. 
\begin{proposition} \label{prop1}
  Let $p>p_{S}$
  and $u$ be a positive radial solution of
  \eqref{eq-ancient} satisfying \eqref{u-Tdecay}.
  Then there is a positive integer $m$ such that
  $z(u(\cdot,t)-\phi_\alpha)\le m$ for all $t<T$ and
  $\alpha\in(0,\infty]$. 
\end{proposition}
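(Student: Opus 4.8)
The plan is to pass to the backward similarity variables and to reduce the desired uniform bound to a single, $\alpha$-independent quantity, namely the zero number $z(w-\phi_\infty)$ of the backward limit of the rescaled solution. With $s=-\log(T-t)$, $v$ the rescaled function from \eqref{vu}, and $\psi_\alpha(\rho,s):=e^{-s/(p-1)}\phi_\alpha(e^{-s/2}\rho)$ (as in the proof of Theorem~\ref{thmL1}, with the convention $\psi_\infty\equiv\phi_\infty$), the substitution $r=e^{-s/2}\rho$ only rescales the radial variable and multiplies by the positive factor $(T-t)^{1/(p-1)}$, so it preserves sign changes; hence $z(u(\cdot,t)-\phi_\alpha)=z(v(\cdot,s)-\psi_\alpha(\cdot,s))$ for every $\alpha\in(0,\infty]$. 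Since $\phi_\alpha$ is a steady state, $u(\cdot,t)-\phi_\alpha$ solves the linear equation \eqref{eq-diff}, so by Proposition~\ref{prop-zero}(i) the map $t\mapsto z(u(\cdot,t)-\phi_\alpha)$ is nonincreasing; thus its supremum over $t<T$ equals its limit as $s\to-\infty$, and it suffices to bound that limit uniformly in $\alpha$.

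The decisive structural point is that \eqref{u-Tdecay} gives a backward bound $\|v(\cdot,s)\|_\infty\le C_0$ as $s\to-\infty$, so $v$ is bounded backward in time and $\alpha(v)$ cannot contain the unbounded state $\phi_\infty$. By Subsection~\ref{subsec-ss}, $\alpha(v)=\{w\}$ for a single bounded radial steady state $w=w_a$ of \eqref{eq-v-rad}, with $k:=z(w-\phi_\infty)<\infty$ and all zeros of $w-\phi_\infty$ simple (two distinct solutions of the steady ODE cannot share a zero of their difference together with its derivative). Since also $\psi_\alpha(\cdot,s)\to\phi_\infty$ in $C^1_{loc}(0,\infty)$ as $s\to-\infty$ for each $\alpha$, we obtain
\[ v(\cdot,s)-\psi_\alpha(\cdot,s)\longrightarrow w-\phi_\infty \quad\text{in }C^1_{loc}(0,\infty), \]
where, crucially, the limit does not depend on $\alpha$. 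Fixing $0<\delta_0<R_0$ so that all $k$ zeros of $w-\phi_\infty$ lie in $(\delta_0,R_0)$, the $C^1_{loc}$ convergence shows that for $s$ sufficiently negative $v(\cdot,s)-\psi_\alpha(\cdot,s)$ has exactly $k$ simple zeros in $(\delta_0,R_0)$.

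It remains to exclude, uniformly in $\alpha$, any zero outside $(\delta_0,R_0)$ for $s$ very negative. Near the origin I would shrink $\delta_0$ so that $\phi_\infty(\delta_0)>2C_0$; since $\phi_\alpha$ is radially decreasing, $\psi_\alpha(\rho,s)\ge\psi_\alpha(\delta_0,s)$ for $\rho\le\delta_0$, and as $\psi_\alpha(\delta_0,s)\to\phi_\infty(\delta_0)>2C_0$, the difference $v-\psi_\alpha$ is negative on $(0,\delta_0]$ once $s$ is negative enough. In the far field I would enlarge $R_0$ so that \eqref{est-v} yields $v,\psi_\alpha\le c_0$ on $[R_0,\infty)$; then $V:=v-\psi_\alpha$ solves \eqref{eq-v1v2} with $f\le-\delta_0$ (cf.~\eqref{est-f2}), its boundary value $V(R_0,s)$ has the fixed nonzero sign of $(w-\phi_\infty)(R_0)$ for $s$ negative, and $v,\psi_\alpha\to0$ as $\rho\to\infty$, so Proposition~\ref{prop-comp} (used as in the remark following it, with the roles of $v_1,v_2$ interchangeable) forbids any sign change of $V$ on $[R_0,\infty)$. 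Hence for all sufficiently negative $s$ the total count is exactly $k$, and monotonicity in $s$ gives $z(v(\cdot,s)-\psi_\alpha(\cdot,s))\le k$ for all $s$ and all $\alpha\in(0,\infty]$, which proves the proposition with $m=z(w_a-\phi_\infty)$.

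I expect the far-field estimate to be the main obstacle. The thresholds $\delta_0,R_0$ and the smallness constant $c_0$ must be chosen depending only on $v$ and the universal constant in \eqref{est-v}, never on $\alpha$; in particular, when $p<p_{JL}$ (so that $\beta$ is complex) the oscillatory tail of $\psi_\alpha-\phi_\infty$ must be shown to be dominated by the slower, sign-definite tail $\sim(c_a-L)\rho^{-2/(p-1)}$ of $v-\phi_\infty$, and it is exactly this domination that Proposition~\ref{prop-comp} together with the choice of $c_0$ provides.
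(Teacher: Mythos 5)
Your reduction to similarity variables, the identification of the backward limit $\alpha(v)=\{w_a\}$ with $k:=z(w_a-\phi_\infty)<\infty$, the count of $k$ simple zeros of $V:=v-\psi_\alpha$ in a compact annulus, and the exclusion of zeros near the origin all coincide with the paper's proof. The genuine gap is in your far-field step. You assert that \eqref{est-v} yields $v\le c_0$ on $[R_0,\infty)$ and that $v\to0$ as $\rho\to\infty$, so that Proposition~\ref{prop-comp} can be used ``with the roles of $v_1,v_2$ interchangeable.'' But $u$ is only an \emph{ancient} solution, so \eqref{est-v} holds for $v$ only with $C_T=1$, i.e.\ $v\le C(\rho^{-2/(p-1)}+1)$, and \eqref{u-Tdecay} gives merely a uniform bound, not smallness or spatial decay: for instance $u(x,t)=\kappa(T^*-t)^{-1/(p-1)}$ with $T^*>T$ is an ancient solution satisfying \eqref{u-Tdecay} whose rescaling tends to the constant $\kappa$, not to $0$. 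The hypotheses \eqref{eq:4} and \eqref{v1v2-infty} of Proposition~\ref{prop-comp} are one-sided---they must be verified by the function playing the role of $v_2$---and they hold for $\psi_\alpha$ (the rescaling of an entire solution, hence $C_T=0$) but in general not for $v$; the remark following Proposition~\ref{prop-comp} permits interchanging the roles only when \emph{both} functions enjoy the decaying bound, which is exactly what fails here. So you can exclude far-field components on which $v<\psi_\alpha$, but you have no tool to exclude components on which $v>\psi_\alpha$; your closing paragraph, which ascribes to $v-\phi_\infty$ a sign-definite tail $\sim(c_a-L)\rho^{-2/(p-1)}$, already presupposes the spatial decay of $v$ that is unavailable.

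This is not a repairable technicality in your formulation: the conclusion ``exactly $k$ zeros for all very negative $s$,'' hence $m=k$, is false in general. Since all zeros of $w_a-\phi_\infty$ are simple and lie in $(0,R_0)$, and $w_a-\phi_\infty<0$ near $\rho=0$, the sign of $w_a-\phi_\infty$ on $[R_0,\infty)$ is $(-1)^{k+1}$; thus if $k$ is even, $V(R_0,s)<0$ for $s$ very negative, while $\psi_\alpha(\rho,s)\to0$ as $\rho\to\infty$ for each fixed $s$, so whenever $v(\cdot,s)$ stays bounded away from $0$ at spatial infinity (which \eqref{u-Tdecay} permits) one has $V>0$ far out and a $(k+1)$-st zero in $(R_0,\infty)$ is forced at every such $s$. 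This is precisely why the paper proves the proposition with $m=k+1$ rather than $k$: it allows one far-field zero, supposes for contradiction that $z(V(\cdot,s_0))\ge k+2$, picks $\rho_0$ beyond the $(k+1)$-st zero with $V(\rho_0,s_0)<0$, and applies Proposition~\ref{prop-comp} one-sidedly to the connected component of $\{V\ne0\}$ containing $(\rho_0,s_0)$: there $v<\psi_\alpha\le c_0$, and the monotonicity of the zero number together with the exact count $k$ in $(0,R_0)$ keeps that component to the right of $R_0$, so \eqref{eq:4} and \eqref{v1v2-infty} hold with $v_2=\psi_\alpha$. Replacing your two-sided far-field exclusion by this one-sided contradiction argument makes your proof correct, with the weaker (and correct) bound $m=k+1$.
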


\begin{proof}
  Fix any $\alpha\in(0,\infty]$ and set $\phi:=\phi_\alpha$,
  \begin{equation}
    \label{eq:5}
    {v}(\rho,s):=e^{-s/(p-1)}{u}(e^{-s/2}\rho,T-e^{-s}), \ \
{\psi}(\rho,s):=e^{-s/(p-1)}{\phi}(e^{-s/2}\rho)
  \end{equation}
(in particular, $\psi\equiv\phi_\infty$ when $\alpha=\infty$). 
Then $v,\psi$ solve equation \eqref{eq-v-rad}.
By  \eqref{u-Tdecay},
there is $s_0\in\R$ such that
$v$ is bounded for $s\leq s_0$.

Remarks in Subsection~\ref{subsec-ss} 
show that for the $\alpha$-limit set 
of $v$  in $C^1_{loc}[0,\infty)$ we have either
$\alpha(v)=\{w_a\}$ with
$a\in{\cal A}_k$ for some $k>0$ or 
$\alpha(v)=\{\phi_\infty\}$. The latter is ruled out
by the boundedness of $v$ for $s\leq s_0$, so we have the former.
We prove that the conclusion of the proposition holds with $m=k+1$
(which is independent of $\al$). 
Note that $\alpha(\psi)=\{\phi_\infty\}$ in $C^1_{loc}(0,\infty)$.

Since $\phi(r)\leq Cr^{-2/(p-1)}$,
we also have $\psi(\rho,s)\leq C\rho^{-2/(p-1)}$ for all $s$
and we can fix $R_0>\phi_\infty^{-1}(\kappa)$ such that 
\begin{equation} \label{psic0}
\psi(\rho,s)\leq c_0
\quad\hbox{for all}\quad s\ \hbox{ and }\ \rho\geq R_0,
\end{equation} 
where $c_0$ is defined in \eqref{est-f2}.

Consider the function $V:=v-\psi$.
The $k$ zeros of $w_a-\phi_\infty$ belong to the interval
$(0,\phi_\infty^{-1}(\kappa)]\subset(0,R_0)$ 
(cp.~\eqref{eq-a}).
Also, since $w_a,\phi_\infty$ solve the same second order ODE, the
zeros are simple.
This fact and the convergence of $v,\psi$ as $s\to-\infty$
guarantee that, decreasing $s_0$ if necessary, we have
\begin{equation} \label{zVk}
z_{(0,R_0)}(V(\cdot,s))=k \quad\hbox{for all}\quad s\leq s_0.
\end{equation}

Assume, for a contradiction, that
$z((V(\cdot,s_1))\geq k+2$ for some $s_1\leq s_0$.
Decreasing $s_0$ further if needed, we may assume $s_1=s_0$.
Denoting by $\rho^*$ the $(k+1)$-th zero of $V(\cdot,s_0)$,
we can choose $\rho_0>\rho^*$ such that $V(\rho_0,s_0)<0$.
Let $D,D_0,\Omega(s),m(s),S$ be as in \eqref{D}, \eqref{D1},
and $\rho_1(s):= \inf\Omega(s)$. Clearly, $\rho_1(s)$ is a zero of
$V(\cdot,s)$ for each $s\in(S,s_0]$, and, by the monotonicity
of the zero number, $\rho_1(s)$ is at least $(k+1)$-th zero of
$V(\cdot,s)$. Hence \eqref{zVk} implies $\rho_1(s)\geq R_0$
for $s\in(S,s_0]$. Now \eqref{psic0} and Proposition~\ref{prop-comp}
give $V(\rho_0,s_0)>0$, 
and we have a contradiction.

Consequently, $z((V(\cdot,s))\leq k+1$
for all $s\leq s_0$ and the monotonicity
of the zero number gives the same estimate for $s>s_0$.
This gives the desired estimate
 $z(u(\cdot,t)-\phi_\alpha)\le k+1$.
\end{proof}

\begin{proof}[Proof of Theorem~\ref{thm1}]
  By standard results, since the solution $u$ is bounded,  its
  $\omega$-limit set in $C^1_{loc}([0,\infty))$, $\omega(u)$, is a
nonempty compact set in this space and the desired conclusion 
$u(\cdot,t)\to 0$ is equivalent to $\omega(u)=\{0\}$.
Also, $\omega(u)$ is invariant: for any $u_\infty^0\in\omega(u)$ 
there is a radial  solution of \eqref{eq-entire}
satisfying $u_\infty(\cdot,0)=u_\infty^0$ and $u_\infty(\cdot,t)\in
\omega(u)$ for all $t\in\R$. Obviously, any such $u_\infty$
is nonnegative and bounded. 
  
Set
$$\ell^-:=\liminf_{t\to\infty}u(0,t)\quad\hbox{and}\quad
\ell^+:=\limsup_{t\to\infty}u(0,t).$$
By the boundedness of $u$, these limits are finite. 
We first prove that $\ell^+=\ell^-$. 
Assume not and fix $\alpha\in(\ell^-,\ell^+)$.
Then $u(0,t_k)=\alpha=\phi_\alpha(0)$ 
(and $u_r(0,t_k)=0=\phi'_\alpha(0)$)
for an infinite sequence 
$t_k\to\infty$.
It follows that $z(u(\cdot,t)-\phi_\alpha)$ drops at each $t_k$
(cp.~Proposition~\ref{prop-zero}),
which is a contradiction to Proposition~\ref{prop1}.
Thus, $\ell^+=\ell^-=:\alpha$,
which implies that $u(0,t)\to\alpha$ as $t\to\infty$.

Consequently, any element $u_\infty^0$ of $\omega(u)$
has $u_\infty^0(0)=\alpha=\phi_\alpha(0)$. We show that actually
$u_\infty^0\equiv \phi_\alpha$. Assume that, to the contrary,
$u_\infty^0(r_0)\ne \phi_\alpha(r_0)$ for some $r_0>0$.
Let $u_\infty$ be the entire solution of \eqref{eq-entire}
corresponding to $u_\infty^0$, as above.
Then $u(0,t)=\alpha$ (and $u_r(0,t)=0$)
for all $t$, and 
 $u_\infty(r_0,t)\ne \phi_\alpha(r_0)$ for $t\approx 0$. 
Hence $z_{(0,r_0)}(u_\infty(\cdot,t)- \phi_\alpha)$
is finite for $t$ near $0$ and drops at any such $t$, which is
absurd. Thus we have showed that $\omega(u)=\{\phi_\alpha\}$.
 
To conclude, assume $\alpha>0$ and fix $\beta>0$, $\beta\ne\alpha$.
Then $z(u(\cdot,t)-\phi_\beta)$ is bounded by Proposition~\ref{prop1}.
However, in the considered range  $p_S<p<p_{JL}$ we have 
$z(\phi_\alpha-\phi_\beta)=\infty$ (see \cite{W93, QS07})
and the zeros of $\phi_\alpha-\phi_\beta$ are simple.
The convergence of $u(\cdot,t)$ to $\phi_\alpha$ therefore implies
that $z(u(\cdot,t)-\phi_\beta)\to \infty$ as $t\to\infty$, a
contradiction. 
Thus, $\alpha=0$ and we have proved the desired conclusion $\omega(u)=\{0\}$. 
\end{proof}
\begin{proof}[Proof of Theorem~\ref{thm2}]
  Assume that $v$ is not a steady state of \eqref{eq-v}.
We know from Subsection~\ref{subsec-ss}
that each of the sets $\alpha(v)$, $\omega(v)$ is
a singleton consisting of either $w_a$ for some $a\in{\cal A}$,
or $\phi_\infty$. 
In addition, $\alpha(v)\ne\{0\}$ and monotonicity of the energy
functional (cp. Proposition \ref{prop-energy}) gives
$\omega(v)\ne\alpha(v)$.
Obviously, $\alpha(v)=\{\phi_\infty\}$ if and only if \eqref{u-Tdecay}
fails; if \eqref{u-Tdecay} holds, 
we necessarily have $\alpha(v)=\{w_a\}$ for some $a\in{\cal A}$.

We next prove that  $\omega(v)=\{\tilde w\}$ where $\tilde w= w_a$
for some $a\in{\cal A}$ (possibly $a=0$).
For that, we just need to show that 
$\omega(v)\ne\{\phi_\infty\}$.
If $p>p_L$, this follows from 
Proposition~\ref{propEinftykappa}, as already noted in
Subsection~\ref{subsec-ss} (thus,
$\omega(v)=\{0\}$ or $\omega(v)=\{\kappa\}$ in this case).
If $p_S<p<p_{JL}$ and \eqref{u-Tdecay} fails, then the relation
follows from  $\{\phi_\infty\}=\alpha(v)\ne \omega(v)$.

If  $p_S<p<p_{JL}$ and 
\eqref{u-Tdecay} is true,  Proposition~\ref{prop1} applies. 
Let $m$ be as in the proposition.
Suppose $\omega(v)=\{\phi_\infty\}$. Since
$z(\phi_\infty-\phi_\alpha)=\infty$ for any $\alpha>0$
(see \cite{W93} or \cite{QS07}),  
for all sufficiently large $s$, the function $v(\cdot,s)-\phi_1$
has at least $m+1$  zeros. Pick any such $s$ and set
$\alpha:=e^{s/(p-1)}$. 
By the scaling invariance of equation
\eqref{eq-Fuj}, we can write
$\phi_1(\rho)=\alpha^{-1}\phi_\alpha(\alpha^{-(p-1)/2}\rho)$.
Using this and the relation between $u$ and $v$ (cp. \eqref{eq:5}),
we obtain,  for $t=T-e^{-s}$,
\begin{equation*}
 m+1\le  z(v(\cdot,s)-\phi_1)=z(v(\cdot,s)-e^{-s/(p-1)}{\phi_\alpha}(e^{-s/2}\rho)))=z(u(\cdot,t)-\phi_\alpha), 
\end{equation*}
and we have a contraction to Proposition \ref{prop1}.

To complete the proof of Theorem~\ref{thm2}, it remains to show
that $\omega(v)=\{\tilde w\}$ implies the convergence
\begin{equation}\label{connection-tws}
  \lim_{s\to\infty}v(\cdot,s)=\tilde w
\end{equation}
in  $C^1_{loc}(\R^N)$ (and not just in
$C^1_{loc}(\R^N\setminus\{0\})$, the space used in the definition of
$\omega(v)$),
and that $\alpha(v)=\{w\}$ in conjunction with \eqref{u-Tdecay}
implies the convergence 
\begin{equation}\label{connection-ws}
  \lim_{s\to-\infty}v(\cdot,s)=w
\end{equation}
in  $C^1_{loc}(\R^N)$.
The latter is a simpler: \eqref{connection-ws} follows from the convergence in
$C^1_{loc}(\R^N\setminus\{0\})$,   
the boundedness of $v(\cdot,s)$ as $s\to-\infty$ (condition \eqref{u-Tdecay}),
and parabolic estimates. 

The former can be proved similarly once we show that as $s\to\infty$
the function $v(\cdot,s)$ stays bounded on a neighborhood of the
origin. For this, we use a ``no-needle'' lemma, Lemma 2.14 of \cite{MM09}.
Consider the functions $v(\cdot,k+\cdot)$, $k=1,2, \dots$.
Since the sequence $v(\cdot,k)$ converges in
$C^1_{loc}(\R^N\setminus\{0\})$  to $\tilde w$, a bounded 
function,  \cite[Lemma 2.14]{MM09} yields positive constants $\de>0$,
$M_1$ such that $v(\cdot,k+\de)\le M_1$ on $\R^N$ for $k=1,2,\dots$.
Making $M_1$ larger if necessary, we may also assume that $\tilde
w(0)<M_1$.  
Take now any $a>M_1$ and let $w_a$ be the solution of
\begin{equation}
 \label{eq:7s}
\begin{aligned}
  w_{\rho\rho}+\frac{N-1}\rho w_\rho-\frac \rho2 w_\rho-\frac
  w{p-1}+w^p&=0,\quad \rho >0,\\
  w(0)=a,\quad w'(0)&=0.
\end{aligned}\end{equation}
Then $w_a$ is defined (at least) on a small interval $[0,R]$ and,
making $R>0$ smaller if needed, we have
$w_a>M_1 >\tilde w$ on $[0,R]$. Since $v(R,s)\to \tilde w(R)$,
$v(R,s)<w_a(R)$ for all sufficiently large $s$. Since also
$v(r,k+\de)\le M_1<w_a(r)$ for all $r\in [0,R]$ and $k=1,2,...$, we
obtain from the comparison principle that $v(r,s)<w_a(r)$
for all $r\in [0,R]$ if $s$ is large enough. This is the desired
estimate, from which \eqref{connection-tws} is proved easily.
\end{proof}

We remark that the monotonicity of $s\mapsto z(v(\cdot,s)-\phi_\infty)$
implies that the steady states $w$ and $\tilde w$ in
\eqref{connection-w} satisfy
$z(w-\phi_\infty)\geq z(\tilde w-\phi_\infty)$.

\section{Further results and applications} \label{sec-appl}
In the following theorem, we consider two classes of positive radial
solutions of \eqref{eq-Fuj} for $p>p_L$. The first class consists of solutions which exhibit a type II blowup and the second class of
global  solutions which decay to 0 with rate  slower than
$t^{-1/(p-1)}$, or do not decay at all. As an application of our new
Liouville theorem, Theorem \ref{thmL1}, we show that at least along a
sequence of times, the profiles of the solutions have a limit.
\begin{theorem} \label{thm-profile}
Let $p>p_L$ and $u$ be a positive radial solution of \eqref{eq-Fuj} in $\R^N\times(0,T)$.
Assume that 
$$
\begin{aligned}
\hbox{either \ }&T<\infty,\ \
\limsup_{t\to T}(T-t)^{\frac1{p-1}}\|u(\cdot,t)\|_\infty=\infty
\hbox{ \ (type II blowup)}, \\
\hfill\break
\hbox{or \ }&T=\infty, \ \
\limsup_{t\to T}t^{\frac1{p-1}}\|u(\cdot,t)\|_\infty=\infty
\hbox{ \ (slow or no decay)}.
\end{aligned}
$$
Then there exist $t_k\to T$ such that
\begin{equation} \label{eq-profile}
 \lambda_k^{2/(p-1)} u(\lambda_k r,t_k)\to \phi_1(r),
\qquad \lambda_k:=\frac1{\|u(\cdot,t_k)\|_\infty^{(p-1)/2}},
\end{equation}
uniformly in $r\in[0,\infty)$.
\end{theorem}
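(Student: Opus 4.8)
The plan is to fix a sequence $t_k\to T$ along which the (scale-invariant) intensity of $u$ is large and nearly maximal in a whole space-time neighborhood, to rescale $u$ parabolically at that intensity, and then to identify the limit as the steady state $\phi_1$ by means of the Liouville theorem, Theorem~\ref{thmL1}. Writing $M(t):=\|u(\cdot,t)\|_\infty$ and $\Theta(t):=(T-t)^{1/(p-1)}M(t)$ (resp. $\Theta(t):=t^{1/(p-1)}M(t)$ when $T=\infty$), the hypothesis is precisely $\limsup_{t\to T}\Theta(t)=\infty$. Setting $\lambda_k:=M(t_k)^{-(p-1)/2}$ and
$$ w_k(y,s):=\lambda_k^{2/(p-1)}u(x_k+\lambda_k y,\,t_k+\lambda_k^2 s), $$
each $w_k$ again solves \eqref{eq-Fuj} and satisfies $\|w_k(\cdot,0)\|_\infty=1$. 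The whole difficulty is to choose $(x_k,t_k)$ so that the $w_k$ are uniformly bounded on parabolic cylinders exhausting $\R^N\times\R$ in \emph{both} time directions; once this is achieved, the limit $W$ will be a bounded entire solution and Theorem~\ref{thmL1} will force it to be a steady state.

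First I would run a Hamilton-type space-time point selection. Using the scale-invariant quantity $u^{p-1}(x,t)(T_k-t)$ with $T_k\nearrow T$ (for $T=\infty$ one uses $u^{p-1}(x,t)\,t$ on an expanding window, with a cut-off keeping the maximizer in the interior), I pick $(x_k,t_k)$ to maximize it over $\{t\le T_k\}$; the universal bound \eqref{est-u} guarantees that the supremum is finite and attained in a bounded region, and the hypothesis $\limsup\Theta=\infty$ forces the maximal value $\theta_k\to\infty$ and $t_k\to T$. Writing $Q_k:=u(x_k,t_k)^{p-1}=\lambda_k^{-2}$, maximality gives $u^{p-1}(x,t)(T_k-t)\le\theta_k$, which after rescaling reads $w_k^{p-1}(y,s)\le \theta_k/(\theta_k-s)$ wherever $t\le T_k$; this is $\le 2$ on $\{|s|\le\theta_k/2\}$, i.e. on two-sided cylinders whose radii tend to infinity. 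Because the maximizer is, at the fixed time $t_k$, a spatial maximum of $u(\cdot,t_k)$, we automatically get $u(x_k,t_k)=M(t_k)$, so $\lambda_k=M(t_k)^{-(p-1)/2}$ exactly as in \eqref{eq-profile}.

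Given this uniform two-sided bound, parabolic estimates and a diagonal argument yield, along a subsequence, $w_k\to W$ in $C^{2,1}_{loc}(\R^N\times\R)$, with $W$ a bounded entire solution of \eqref{eq-entire} satisfying $0\le W\le 1$ and $W(0,0)=1$; by the strong maximum principle $W>0$. The $w_k$ are radial about the image $-x_k/\lambda_k$ of the true origin. If $|x_k|/\lambda_k\to\infty$, then $W$ would depend on a single variable and would be a positive entire solution of the one-dimensional equation, which is excluded by Theorem~\ref{thm-subcrit} (in $N=1$ one has $p<p^*=\infty$); hence $x_k/\lambda_k\to y_0$ for some finite $y_0$ and $W$ is radial about $-y_0$. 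By Proposition~\ref{prop-monot}, $W$ is radially decreasing about $-y_0$, so its maximum $1$ is attained only at $-y_0$; since $W(0,0)=1$ as well, we obtain $y_0=0$, so $W$ is radial about the origin with $W(0,0)=\max W=1$. Theorem~\ref{thmL1} then gives $W\equiv\phi_\alpha$, and $\phi_\alpha(0)=1$ forces $\alpha=1$, i.e. $W=\phi_1$. Finally $x_k/\lambda_k\to0$ lets me recenter at the true origin, so $\lambda_k^{2/(p-1)}u(\lambda_k r,t_k)\to\phi_1(r)$ in $C_{loc}$; the tail bound from \eqref{est-u} (both sides decay like $r^{-2/(p-1)}$) upgrades this to uniform convergence on $[0,\infty)$, which is \eqref{eq-profile}.

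The main obstacle is precisely the construction of this \emph{eternal} (two-sided) limit. A naive selection of record times for $\Theta$ controls $w_k$ only backward in time and produces merely an ancient limit, to which the available Liouville theorem---stated for entire solutions---does not apply, and for which the ancient classification of Theorem~\ref{thm2} does not by itself return a steady state in the original variables. The Hamilton-type choice of the scale-invariant weight is exactly what enforces simultaneous forward and backward control. A secondary point requiring care is the case $T=\infty$, where the correct weight is $u^{p-1}t$ and one must place the cut-off time so that the maximizer stays away from the ends of the window, guaranteeing that the rescaled cylinder is genuinely two-sided; the remaining steps are then identical.
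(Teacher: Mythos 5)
Your overall skeleton coincides with the paper's: rescale at the sup norm, get compactness from parabolic estimates, use the one-dimensional Liouville theorem (the second statement of Theorem \ref{thm-subcrit}, with $p^*=\infty$ when $N=1$) to control where the solution concentrates, invoke Theorem \ref{thmL1} to identify the limit as a steady state, and use the universal estimate \eqref{est-u} to upgrade local to uniform convergence on $[0,\infty)$. Where you genuinely differ is the point-selection device. The paper applies the Doubling Lemma of \cite{PQS07a} to the function $t\mapsto \|u(\cdot,t)\|_\infty^{(p-1)/2}$ with the distance $d(t)=\min(\sqrt t,\sqrt{T-t})$, keeps the rescaling centered at the spatial origin, and proves a separate concentration claim (1-D Liouville plus diagonalization) to ensure the maximum of $V_k(\cdot,0)$ stays in a fixed compact set. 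You instead run a Hamilton-type maximization of the scale-invariant weight $u^{p-1}(x,t)(T_k-t)$, center the rescaling at the maximizer $x_k$, and use the 1-D Liouville theorem to show $x_k/\lambda_k\to 0$. For $T<\infty$ your variant is correct and even has some advantages: since the weight vanishes at $t=T_k$, the two-sided bound $w_k^{p-1}(y,s)\le\theta_k/(\theta_k-s)\le 2$ for $|s|\le\theta_k/2$ is automatic, the normalization $w_k(\cdot,0)\le 1=w_k(0,0)$ comes for free, and the limit satisfies $W\le1$ globally, which cleanly forces the center to be the origin and $\alpha=1$. (One small repair: as in the paper's reduction \eqref{uBdd0}, you must maximize over $t\in[\delta,T_k]$ rather than $t\in(0,T_k]$; estimate \eqref{est-u} does not prevent the weight from blowing up as $t\to0^+$ near $r=0$, because $u$ is only assumed classical on the open interval $(0,T)$.)

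The genuine gap is the case $T=\infty$. The weight $u^{p-1}(x,t)\,t$, maximized over $\delta\le t\le T_k$, contains no mechanism keeping the maximizer away from the right endpoint: writing $Q_k=u^{p-1}(x_k,t_k)=\lambda_k^{-2}$ and $\theta_k=Q_kt_k$ for the maximal value, maximality yields $w_k^{p-1}(y,s)\le\theta_k/(\theta_k+s)$ only for $s\le Q_k(T_k-t_k)$, and if the maximizer lands at $t_k=T_k$ (which happens whenever $\sup_x u^{p-1}(x,t)\,t$ is nondecreasing on the window, a perfectly plausible behavior for a solution with growing norm), then $Q_k(T_k-t_k)=0$ and there is no forward control at all --- exactly the ``merely ancient limit'' failure that you yourself single out as fatal. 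Saying that one ``places the cut-off time so that the maximizer stays away from the ends'' is not an argument: the location of the maximizer is dictated by $u$, not by you. The standard repair is to use a weight vanishing at both ends of the window, e.g.\ $u^{p-1}(x,t)\min(t,T_k-t)$ or $u^{p-1}(x,t)\,t(T_k-t)/T_k$; maximality then forces both $Q_kt_k\ge\theta_k$ and $Q_k(T_k-t_k)\ge\theta_k$, so $w_k^{p-1}\le2$ on the two-sided cylinders $|s|\le\theta_k/2$, and the rest of your argument goes through verbatim. Alternatively, the paper's Doubling Lemma, with $d(t)=\min(\sqrt t,\sqrt{T-t})$ built into it, treats the two cases uniformly and avoids this issue altogether.
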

In the blowup case, this theorem is 
known to hold for any $p\geq p_S$ 
under the extra assumption that $z(u_t(\cdot,t))<\infty$:
a long and technically involved proof can be found in
\cite{MM04}. 
The convergence in \eqref{eq-profile} plays a 
key role in \cite{MM04} in the study of blowup rates and profiles.     
\begin{proof}[Proof of Theorem \ref{thm-profile}]
The proof is based on doubling, scaling, one-dimen\-sion\-al Liouville
theorem, and Theorem~\ref{thmL1}. 
 
Considering equation \eqref{eq-Fuj} on the time interval
$(\delta,T)$ instead of $(0,T)$ (where $0<\delta<T$)
we may assume that 
\begin{equation} \label{uBdd0}
\hbox{$\|u(\cdot,t)\|_\infty$ is bounded
for $t\in(0,\tau)$ whenever $\tau<T$.}
\end{equation} 

Set 
$$ M(t):=\|u(\cdot,t)\|_\infty^{(p-1)/2}.$$
Our assumptions imply that there exist $t_k\to T$
such that $M_k:=M(t_k)>2k/d(t_k)$,
where $d(t):=\min(\sqrt{t},\sqrt{T-t})$ ($d(t)=\sqrt{t}$ if $T=\infty$).
The Doubling Lemma \cite[Lemma~5.1]{PQS07a} guarantees that,
possibly after modifying the sequence $\{t_k\}$, the following
additional condition is satisfied for $k=1,2,\dots$:
$M(t)\leq 2M_k$ whenever  $\sqrt{|t-t_k|}<k/M_k$.

Set $\lambda_k:=1/M_k$.
We claim that given any $\eps_0\in(0,1)$  
 there exists $R_0=R_0(\eps_0)$
such that for a suitable subsequence of $k$ we have
$u(r,t_k)<\eps_0\|u(\cdot,t_k)\|_\infty$
whenever $r/\lambda_k>R_0$.
Assume that no such $R_0$ exist.
Then we can find (a subsequence of $k$ still denoted by $k$ and) $r_k$ such that
$r_k/\lambda_k\to\infty$ and $u(r_k,t_k)\geq\eps_0\|u(\cdot,t_k)\|_\infty$.  
Set 
$$U_k(\rho,s):=\lambda_k^{2/(p-1)}u(r_k+\lambda_k\rho,t_k+\lambda_k^2s).$$
Then for $k=1,2,\dots$, $U_k$ satisfies the equation
$$  U_s = U_{\rho\rho}+\frac{N-1}{r_k/\lambda_k+\rho}U_\rho+U^p, $$
$U_k$ is bounded in
$\{(\rho,s):\sqrt{|s|}<k,\ \rho\geq-r_k/\lambda_k \}$
by a constant independent of $k$, and
$U_k(0,0)\geq\eps_0$.
Since $r_k/\lambda_k\to\infty$,
(a suitable subsequence of)
$\{U_k\}$ converges 
to a positive solution of \eqref{eq-entire} with $N=1$,
which contradicts the corresponding Liouville theorem
(see the second part of Theorem \ref{thm-subcrit}).
The claim is thus proved.

Take now a decreasing sequence $\eps_j\to0$. Using
a diagonalization argument, we find a subsequence of $k$  such that
$u(r,t_k)<\eps_j\|u(\cdot,t_k)\|_\infty$
whenever $r/\lambda_k>R_j:=R_0(\eps_j)$ and $k$ is large enough.

Next set
\begin{equation} \label{eq-Vk}
V_k(\rho,s):=\lambda_k^{2/(p-1)}u(\lambda_k\rho,t_k+\lambda_k^2s).
\end{equation}
Then $V_k$ satisfies the equation
$$  V_s = V_{\rho\rho}+\frac{N-1}{\rho}V_\rho+V^p, $$
$V_k$ is bounded by 2 in $\{(\rho,s):\rho>0,\ \sqrt{|s|}<k\}$,
and $V_k(\cdot,0)$ attains its maximum 1 in the compact set 
$\{\rho:|\rho|\leq R_0\}$ (since $V_k(\rho,0)\leq\eps_0$ for $\rho>R_0$).
A suitable subsequence of
$\{V_k\}$ converges (in $C_{loc}$, for example)
to a positive solution of \eqref{eq-entire},
hence to a steady state $\phi_\alpha$.
Since $\max V_k(\cdot,0)=1$, we have $\alpha=1$.
Since $V_k(\rho,0)\leq\eps_j$ for $\rho>R_j$ and $k$ large enough,
we see that the convergence is uniform on $[0,\infty)$.
\end{proof}

We now return to the classification problem for entire solutions satisfying 
\eqref{u-Tdecay} (cp. Theorem \ref{thm1}). As mentioned in the
introduction, we believe that the statement of Theorem \ref{thm1}
holds also in the range $p_{JL}\le p<p_{L}$. We can actually prove
this, see Proposition \ref{prop-pJLL} below,
provided the following condition on the energies
of steady states of \eqref{eq-v-rad} is satisfied:
\begin{equation} \label{Ewa-n}
E(w_a)<E(\phi_\infty)\quad \text{for all}\quad a\in{\cal A}. 
\end{equation}
This looks plausible, although the proof may not be easy. One way
\eqref{Ewa-n} could be verified is by proving the existence of a
solution of  \eqref{eq-v-rad} connecting $\phi_\infty$ to $w_a$, for any
$a\in \cal A$. Then the monotonicity of the energy would give \eqref{Ewa-n}
immediately. The question whether such connections indeed exist is
of independent interest. A positive answer would give an interesting
information on the variety of entire solutions of \eqref{eq-v-rad}.
What seems to be crucial for establishing the connections is
a description of the (global) bifurcation diagram for the
steady states of \eqref{eq-v-rad} when $p$ decreases from $p_L$
down to $p_{JL}$. Optimally, one would prove that all  
classical steady states lie on bifurcation branches
emanating from the singular steady state at some bifurcation values of
$p$. If this could be proved, then there is hope that the connections can
first be established locally, near bifurcation points, then globally
by continuation, somewhat in the spirit of  \cite[Section~3]{FMP02}. 

\begin{proposition} \label{prop-pJLL}
Let $p_{JL}\le p<p_L$.
Assume \eqref{Ewa-n}.
If $u$ is a positive radial bounded solution of
\eqref{eq-entire} satisfying \eqref{u-Tdecay},
then \ $\lim\limits_{t\to\infty}\|u(\cdot,t)\|_\infty=0$
(i.e.~$u$ is a homoclinic solution to the trivial steady state).
\end{proposition}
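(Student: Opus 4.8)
The plan is to follow the proof of Theorem~\ref{thm1} for as long as possible and to replace only its last step, where the ordering of the steady states $\phi_\alpha$ (valid for $p\ge p_{JL}$) destroys the zero-number argument, by an argument based on the energy condition \eqref{Ewa-n}.

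First I would run the argument of Theorem~\ref{thm1} essentially verbatim, noting that everything up to and including the identification of $\omega(u)$ uses only $p>p_S$ (through Proposition~\ref{prop1} and Proposition~\ref{prop-zero}). Writing $\ell^\pm$ for $\liminf$ and $\limsup$ of $u(0,t)$ as $t\to\infty$, one shows $\ell^-=\ell^+=:\alpha$ (otherwise $z(u(\cdot,t)-\phi_\alpha)$ would drop infinitely often, contradicting the uniform bound of Proposition~\ref{prop1}), so $u(0,t)\to\alpha$; the same zero-number dropping on bounded intervals then forces every element of $\omega(u)$ to equal $\phi_\alpha$, i.e.\ $\omega(u)=\{\phi_\alpha\}$ with $\alpha\ge0$. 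It remains to prove $\alpha=0$. Since $z(\phi_\alpha-\phi_\beta)=0$ in the present range, the final contradiction of Theorem~\ref{thm1} is unavailable and must be supplied by \eqref{Ewa-n}.

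To rule out $\alpha>0$ I would argue by contradiction using the Lyapunov functional $E$ and rescalings centred at times tending to $+\infty$. Choose $T_n\to+\infty$ and let $v_n$ be obtained from $u$ and $T_n$ by \eqref{vu}. As $u$ is a bounded entire solution, each $v_n$ obeys \eqref{est-v} with $C_T=0$, so $v_n(\rho,s)\le C\rho^{-2/(p-1)}$ uniformly in $n$ and $s$; this envelope is $\varrho$-integrable, the $w^{p+1}$ and $|\nabla w|^2$ contributions to $E$ being integrable at the origin precisely because $p>p_S$. Two limits are relevant. As $s\to-\infty$ the physical time $T_n-e^{-s}\to-\infty$, so by \eqref{u-Tdecay} and the exclusions $0,\kappa,\phi_\infty\notin\alpha(v_n)$ recorded in Subsection~\ref{subsec-ss} we have $\alpha(v_n)=\{w_a\}$ with a nonconstant steady state $w_a$, $a\in{\cal A}\setminus\{0,\kappa\}$; moreover $w_a$ is the same for every $n$, since for two centres the fixed offset is negligible against $e^{-s}\to\infty$ and the two rescalings share the same $\alpha$-limit. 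On the other hand, for each fixed $s$ the physical time $t_n=T_n-e^{-s}\to+\infty$, whence $u(\cdot,t_n)\to\phi_\alpha$ in $C^1_{loc}([0,\infty))$ and $v_n(\cdot,s)\to\psi_\alpha(\cdot,s):=e^{-s/(p-1)}\phi_\alpha(e^{-s/2}\cdot)$, the rescaled steady state of \eqref{estPY}.

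The energy then closes the argument. By Proposition~\ref{prop-energy} the map $s\mapsto E(v_n(\cdot,s))$ is nonincreasing with value $E(w_a)$ in the limit $s\to-\infty$, so $E(v_n(\cdot,s))\le E(w_a)$ for all $n$ and $s$. Fixing $s$ and letting $n\to\infty$, dominated convergence (with the envelope above) gives $E(\psi_\alpha(\cdot,s))\le E(w_a)$ for every $s$; letting now $s\to-\infty$ and using $\psi_\alpha(\cdot,s)\to\phi_\infty$ (again dominated by $\phi_\infty$) yields $E(\phi_\infty)\le E(w_a)$, in contradiction with \eqref{Ewa-n}. Hence $\alpha=0$, and the tail bound $u(r,t)\le Cr^{-2/(p-1)}$ from \eqref{est-u} upgrades $\omega(u)=\{0\}$ to the desired $\|u(\cdot,t)\|_\infty\to0$. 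I expect the main obstacle to be exactly these two passages to the limit: justifying the dominated-convergence step for $E$ (hence the essential use of \eqref{est-v}/\eqref{est-u} with $C_T=0$ together with the global bound on $u$), and, above all, proving that the backward profile $w_a$ does not depend on the centre $T_n$---without this one only gets $E(v_n)\le E(w_a^{(n)})$ with possibly $\sup_n E(w_a^{(n)})=E(\phi_\infty)$, and the contradiction evaporates.
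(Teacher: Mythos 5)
Your proposal is correct, and its first half (running the proof of Theorem~\ref{thm1} verbatim up to $\omega(u)=\{\phi_\alpha\}$ and isolating the claim $\alpha=0$) is exactly what the paper does; the difference lies in how the energy condition \eqref{Ewa-n} is brought to bear. The paper rescales the solution itself: it sets $u_k(r,t):=k^{2/(p-1)}u(kr,k^2t)$, observes that $u_k$ is again an entire bounded solution satisfying \eqref{u-Tdecay} with the \emph{same} constant $C$ and that $\omega(u_k)=\{\phi_{k^{2/(p-1)}\alpha}\}\nearrow\phi_\infty$, picks $k$ and a time so that $E(v_k(\cdot,0))=E(u_k(\cdot,T-1))>E(\phi_\infty)-\eps$, and then compares with the backward limit $w_a$ of $v_k$, whose value $a=w_a(0)\le C$ lies in ${\cal A}^C:={\cal A}\cap[0,C]$; since ${\cal A}^C$ is finite (by \cite{PQ19}), $\eps$ can be fixed uniformly, $\eps<E(\phi_\infty)-\max_{a\in{\cal A}^C}E(w_a)$, and monotonicity of $E$ gives the contradiction at a single time slice. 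You instead keep $u$ fixed, move the similarity centre $T_n\to+\infty$, and perform a double limit ($n\to\infty$ at fixed $s$, then $s\to-\infty$) with dominated convergence; note that $\psi_\alpha(\cdot,s)=\phi_{e^{-s/(p-1)}\alpha}$, so your $s\to-\infty$ limit and the paper's $k\to\infty$ limit are the same scaling limit in disguise. Both of the obstacles you flag are harmless. The dominated-convergence steps are justified exactly as you say by \eqref{est-v} with $C_T=0$ (this is also how Subsection~\ref{subsec-ss} deduces that $C^1_{loc}$ convergence implies convergence of energies). The independence of the backward profile from the centre can indeed be proved by your ``negligible offset'' computation (write $v_m(y,\cdot)=\lambda(t)^{1/(p-1)}v_n\bigl(y\sqrt{\lambda(t)},\cdot\bigr)$ with $\lambda(t)=(T_m-t)/(T_n-t)\to1$ as $t\to-\infty$), but it is not even needed: by \eqref{u-Tdecay}, every backward limit $w_{a_n}$ of $v_n$ satisfies $a_n=\lim_{s\to-\infty}v_n(0,s)\le C$, hence $a_n\in{\cal A}^C$, and the finiteness of ${\cal A}^C$ yields $\sup_n E(w_{a_n})\le\max_{a\in{\cal A}^C}E(w_a)<E(\phi_\infty)$ by \eqref{Ewa-n}. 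This is precisely the device by which the paper sidesteps the uniformity issue, and it dispels your worry that $\sup_n E(w_{a_n})$ might equal $E(\phi_\infty)$; in exchange, your route makes the mechanism more transparent (the energies $E(\psi_\alpha(\cdot,s))$ visibly climb to $E(\phi_\infty)$), at the cost of two limit interchanges that the paper's one-slice comparison avoids.
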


\begin{proof}
   Assume $p_{JL}\le p<p_L$. 
Let $u$ be a positive radial bounded solution of
\eqref{eq-entire} satisfying \eqref{u-Tdecay}
and let $C$ be the constant from \eqref{u-Tdecay}.
Set ${\cal A}^C:={\cal A}\cap[0,C]$.
As proved in \cite{PQ19}, 
  the set ${\cal A}^C$ is finite.
  Using \eqref{Ewa-n}, we find   $\eps>0$ such that
\begin{equation} \label{epsE}
\eps<E(\phi_\infty)-\max_{a\in{\cal A}^C}E(w_a).
\end{equation}

Using   
Proposition~\ref{prop1} and the same arguments as in the
proof of Theorem~\ref{thm1}, one shows that
$\omega(u)=\{\phi_\alpha\}$ for some $\alpha\in[0,\infty)$.
We need to prove that $\alpha=0$.
Suppose for a contradiction that $\alpha>0$.
Set $u_k(r,t):=k^{2/(p-1)}u(kr,k^2t)$.
Then $u_k$ is a positive radial bounded solution of
\eqref{eq-entire} satisfying \eqref{u-Tdecay} and
$\omega(u_k)=\{\phi_{k^{2/(p-1)}\alpha}\}$. Notice that
$\phi_{k^{2/(p-1)}\alpha}\nearrow\phi_\infty$ as $k\to\infty$.
Therefore, we can find $k$ and $T$ such that
$E(u_k(\cdot,T-1))>E(\phi_\infty)-\eps$.
Let $v_k$ be the rescaled function corresponding to $u_k$ and $T$.
Then $v_k(\cdot,0)=u_k(\cdot,T-1)$,
hence $E(v_k(\cdot,0))>E(\phi_\infty)-\eps$.
Assumption \eqref{u-Tdecay} guarantees that
$$
\begin{aligned}
|v_k(0,-\log(T-t))| &=(T-t)^{-1/(p-1)}k^{2/(p-1)}u(0,k^2t) \\
&\leq C(T-t)^{-1/(p-1)}k^{2/(p-1)}(k^2t)^{-1/(p-1)} \\
&\to C \ \hbox{ as }\ t\to-\infty,
\end{aligned}
$$
hence $\alpha(v_k)=\{w_a\}$ for some $a\in{\cal A}^C$,
$E(w_a)> E(v_k(\cdot,0))>E(\phi_\infty)-\eps$ and we have a
contradiction to 
\eqref{epsE}.
\end{proof}

\begin{remark}
  \label{rm:added}
  {\rm Condition \eqref{Ewa-n} is also sufficient for the
    validity of Theorem~\ref{thm2} for $p_{JL}\le p\le p_L$
    (cp. Remark \ref{rm:en}). Indeed, the proof of
    Theorem \ref{thm2} as given above
    applies in the case $p_{JL}\le p\le p_L$ with a single exception
    of the argument we used for proving the relation
    $\omega(v)\ne\{\phi_\infty\}$ 
    in the case that $\alpha(v)=\{w_a\}$ for some $a\in\cal
    A$. Obviously, if  \eqref{Ewa-n} holds, then instead of that
    argument one can simply refer to the monotonicity of the energy
    functional.  }
\end{remark}

\noindent{\bf Acknowledgment.} A major part of this research was done 
during visits of the second author at the University of Minnesota 
and the first author at the Comenius University. We thank
the mathematics departments at these universities for the hospitality. 
\def\by{\relax} 
\def\paper{\relax} 
\def\jour{\textit} 
\def\vol{\textbf} 
\def\yr#1{\rm(#1)} 
\def\pages{\relax} 
\def\book{\textit} 
\def\inbook{In: \textit} 
\def\finalinfo{\rm} 

\bibliographystyle{amsplain}

\end{document}